\newcommand{\A}{\mathbb{A}}
\newcommand{\C}{\mathbb{C}}
\newcommand{\N}{\mathbb{N}}
\newcommand{\Q}{\mathbb{Q}}
\newcommand{\R}{\mathbb{R}}
\newcommand{\T}{\mathbb{T}}
\newcommand{\Z}{\mathbb{Z}}
\newcommand{\caE}{\mathcal{E}}
\newcommand{\caO}{\mathcal{O}}
\newcommand{\caS}{\mathcal{S}}
\newcommand{\sJ}{\mathscr{J}}
\newcommand{\fg}{{\mathfrak{g}}}
\newcommand{\ft}{{\mathfrak{t}}}
\newcommand{\fu}{\mathfrak{u}}
\DeclareMathOperator {\gr} {Gr}
\DeclareMathOperator {\rk} {rk}
\DeclareMathOperator{\im}{Im}
\DeclareMathOperator{\Spec}{Spec}
\DeclareMathOperator{\Hom}{Hom}
\DeclareMathOperator{\Res}{Res}
\DeclareMathOperator{\Ind}{Ind}
\DeclareMathOperator{\rec}{rec}
\newcommand{\sat}{\text{\rm sat}}
\newcommand{\bq}{\mathbf{q}}
\newcommand{\dn}[1]{#1^{\prime}}
\newcommand{\dual}[1]{#1^{\vee}}
\newcommand{\face}{\preceq}
\newcommand{\fbd}{\dual{\bar{\varphi}}}
\newcommand{\gl}{\mathfrak{gl}}
\newcommand{\Gm}{\mathbb{G}_{\mathrm{m}}}
\newcommand{\grgs}[3]{\gs{#1}{#2}_{#3}}
\newcommand{\gs}[2]{\Gamma\left(#1,\,#2\right)}
\newcommand{\orth}[1]{#1^{\perp}}
\newcommand{\remvivek}[1]{}
\newcommand{\set}[2]{\left\{#1\,\middle|\,#2\right\}}
\newcommand{\setl}[1]{\left\{#1\right\}}
\DeclareMathOperator{\Ad}{Ad}
\DeclareMathOperator{\ad}{ad}
\DeclareMathOperator{\GL}{GL}
\DeclareMathOperator{\SL}{SL}
\DeclareMathOperator{\cone}{cone}
\numberwithin{equation}{section}
\theoremstyle{plain}
\newtheorem{prop}{Proposition}[section]
\newtheorem{cor}[prop]{Corollary}
\newtheorem{lem}[prop]{Lemma}
\newtheorem{thm}[prop]{Theorem}
\theoremstyle{definition}
\newtheorem{df}[prop]{Definition}
\newtheorem{notation}[prop]{Notation}
\theoremstyle{remark}
\newtheorem{rmk}[prop]{Remark}
\newtheorem{ex}[prop]{Example}
\begin{document}

\title{Multifiltrations}

\author{Jos\'e Ignacio Burgos Gil}

\address{ICMAT\\
Campus Cantoblanco\\
Madrid, Spain}
\email{jiburgos@gmail.com}

\author{Vivek Mohan Mallick}

\address{IISER Pune\\
Dr.~Homi Bhabha Road\\
Pune, India}
\email{vmallick@iiserpune.ac.in}
\date{\today}

\subjclass[2010]{16W70, 14M25, 22E27}

\keywords{Multifiltered vector spaces, toric varieties,
representations of solvable Lie groups}

\begin{abstract}
  This paper deals with properties of filtrations on vector spaces indexed by
  partially ordered finitely generated abelian groups, which we call
  multifiltrations. We discuss the usual properties of
  filtrations, like exhaustivity and separatedness in the setting of
  multifiltrations and we introduce a new notion, regular
  multifiltrations, that has no analogue for classical filtrations. 
  As applications, we provide a reinterpretation of Klyachko, Perling et al's
  description of torus-equivariant torsion-free coherent sheaves on a toric
  variety, and give a simple formula for pull-backs of such sheaves. We also
  explore the occurrence of such multifiltration among finite dimensional
  representations of connected, solvable groups.
\end{abstract}

\maketitle

\section{Introduction}
\label{sec:introduction}

Let $S$ be a totally ordered set (for instance $\Z$ or $\R$) and $E$ a
vector space. An increasing filtration of $E$ is a collection of
subspaces $F_{t}E$, $t\in S$ with the property that, if
$t_{1}\le t_{2}$, then $F_{t_{1}}E\subset F_{t_{2}}E$.

A way to interpret a filtration is thinking of the parameter $t$ as the time
variable and $F_{t}E$ as the information available at time $t$. Only events
that occur before $t$ are accessible at a given time. This interpretation is
asking for a generalization to partially ordered sets. In relativistic
physics, the time is not an absolute coordinate. Given two events $p_1$ and
$p_2$, then $p_{1}$ occurs unconditionally after $p_{2}$ if $p_1-p_2$ is a
positive time-like vector. By contrast, the order of occurrence may
depend on the observer if
$p_1-p_2$ is a space-like vector. In this second case none of the events may affect
the other. Therefore space-time is not a totally ordered set.
In other words, if $S$ is space-time, and $p_{1}, p_{2}\in S$, we
write $p_{1}\ge p_{2}$ if $p_1-p_2$ belongs to the cone of positive
time-like vectors.  This is a partially ordered set.

We generalize the concept of filtration as follows.
If $E$ is a vector
space and $S$ a partially ordered set (poset), an increasing
\emph{multifiltration} indexed by $S$ is a collection of 
subspaces $F_{t}E$, $t\in S$ with the property that, if
$t_{1}\le t_{2}$, then $F_{t_{1}}E\subset F_{t_{2}}E$.

There are several notions related to filtrations with
respect to totally ordered sets. A filtration is
called exhaustive if $E = \bigcup F_{t}E$, that is, if all possible
information is accessible eventually, while a filtration is
called separated if $\{0\}=\bigcap F_{t}E$, that is at time $-\infty$
there is no information. We will see that the notion of exhaustive
filtration carries over to multifiltrations without any change (Definition
\ref{def:2}), while the notion of separated is more subtle and we will
introduce two different notions \emph{chain-separated} (Definition
\ref{def:3}) and \emph{separated} (Definition \ref{def:4}).   

In filtrations with respect to totally ordered sets, a particular
piece of information appears only once and then propagates through the
filtration.
A new phenomenon appears when considering multifiltrations with respect to
posets. The same piece of information may appear in different
unrelated places. Therefore, for multifiltrations the
associated graded vector space may be of bigger dimension that the
original vector space. We introduce the concept of \emph{regular
  multifiltration} to denote a multifiltration in which each piece of information appears
once.  

Multifiltrations with respect to posets may be very wild. In this
paper we
will only consider posets defined using submonoids in finitely generated
torsion free abelian groups.

The concept of multifiltration has appeared in 
\cite{Perling2004} in the context of equivariant coherent torsion free
sheaves on toric varieties, as filtrations indexed by a finitely
generated torsion free group with a preorder induced by a polyhedral
cone.
It also has appeared in \cite{Carlsson2009} in
the context of computational topology, as a filtration indexed by
$\N^{n}$ with its usual partial order. 

In this paper, we give several examples of different pathological and
non-pathological behavior of multifiltrations. 
The main result of the present paper is Theorem \ref{thm:cridsdec} where we
show that a multifiltration is exhaustive separated and regular if and
only if the vector space $E$ has a (non-canonical) multigrading that
defines the multifiltration.

Multifiltrations appear naturally in many situations. The main
application we have in mind is the classification by Klyachko
\cite{Klyachko} of equivariant vector bundles on toric varieties. This
classification was extended to the classification of equivariant
quasi-coherent sheaves on toric varieties and that of equivariant
torsion-free sheaves by Perling \cite{Perling2004}.  In this note we
reinterpret Perling's results on torsion free and locally free
equivariant sheaves in terms of separability and regularity of certain
multifiltrations, see Theorem \ref{thm:1} and Remark \ref{rem:3}.
This interpretation will also allow us to give a concise description of
the pull back of equivariant torsion free sheaves.

Another situation where multifiltrations appear naturally is in the theory of
representations of connected solvable algebraic groups. We will give
several examples of such multifiltrations. 

A completely parallel theory can be developed using
cones in real vector spaces instead of torsion free finitely generated
abelian groups. The difference is similar to
consider, in the classical case, filtrations indexed by $\Z$ or
filtrations indexed by $\R$. 

Since in the applications we
will use decreasing multifiltrations we will focus on them. One can go from
decreasing to increasing multifiltrations reversing the order.

\noindent
\textbf{Acknowledgements:} This work has been developed during a visit
  of the first author to the IISER Pune and a visit of the second
  author to the ICMAT. We are very grateful to both institutions by
  their hospitality. J. I. Burgos was partially supported by MINECO
  research projects MTM2013-42135-P and MTM2016-79400-P and by ICMAT
  Severo Ochoa project SEV-2015-0554. 

\section{Preorders and submonoids on  abelian groups}

In this section we gather some basic facts about preorders and
submonoids on  abelian groups. The main purpose is to fix the notation. 

\begin{df} A \emph{preorder} in a set is a binary relation that is
  transitive and symmetric, but not necessarily antisymmetric. 
  Let $(A, +)$ be an abelian group with identity $0$. We say
  that a preorder $\leq$ on $A$ is \emph{compatible} with the binary
  operation $+$ if
  \begin{equation*}
    a \leq b \iff a + c \leq b + c\ \forall c \in A.
  \end{equation*}
  An abelian group $(A, +)$ along with a compatible preorder
  $\leq$ will be denoted by $(A, +, \leq)$.
\end{df}
\begin{df} Let $(A,+)$ be an abelian group. 
  A \emph{submonoid} $A$ is a subset $C$ such that
  \begin{enumerate}
    \item $0 \in C$, and
    \item $x, y \in C$ implies $x + y \in C$.
    \end{enumerate}
    If $C$ is a submonoid, we write $L(C)=C+(-C)$ for the subgroup of $A$
    generated by $C$.
    A submonoid is called \emph{strict} if and only if $C \cap
    (-C) = \setl{0}$. A submonoid is called \emph{generating} if and only
    if $L(C)=A$.
\end{df}

\begin{lem}\label{lemm:1}
  There is a bijection between the set of all compatible preorders on
  $(A, +)$ and the set of all submonoids in $A$. A preorder is an order if
  and only if the corresponding submonoid is strict and a preorder
  is directed if and only if the submonoid is generating.
\end{lem}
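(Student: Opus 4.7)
The plan is to set up the bijection explicitly and then check each of the three equivalences (preorder/submonoid, antisymmetry/strictness, directedness/generation) in turn.

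First I would define the two maps. Given a compatible preorder $\leq$ on $A$, set $C_\leq = \{a \in A : 0 \leq a\}$. Reflexivity gives $0 \in C_\leq$, and if $x,y \in C_\leq$, then compatibility applied to $0 \leq x$ gives $y \leq x+y$, so transitivity with $0 \leq y$ gives $0 \leq x+y$; thus $C_\leq$ is a submonoid. In the other direction, given a submonoid $C$, define $a \leq_C b$ iff $b - a \in C$. Reflexivity comes from $0 \in C$, transitivity from closure under $+$, and compatibility is built in because $(b+c)-(a+c) = b-a$.

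Next I would verify that the two constructions are inverse to each other. Starting from $\leq$ and forming $C_\leq$, then $a \leq_{C_\leq} b$ means $b-a \in C_\leq$, i.e., $0 \leq b-a$, which by compatibility (adding $a$) is equivalent to $a \leq b$. Starting from $C$ and forming $\leq_C$, the associated submonoid is $\{a : 0 \leq_C a\} = \{a : a - 0 \in C\} = C$. So the two maps are mutually inverse.

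For the antisymmetry statement: $\leq_C$ is antisymmetric iff $b-a \in C$ and $a-b \in C$ force $a=b$, and this is exactly the condition $C \cap (-C) = \{0\}$, i.e., strictness. The directedness statement is the one I expect to require the most care. The easy direction is: if $\leq_C$ is directed, then for any $a \in A$ pick a common upper bound $c$ of $0$ and $a$; then $c \in C$ and $c - a \in C$, so $a = c - (c-a) \in C - C = L(C)$, proving $L(C) = A$. The reverse direction is the subtle step: assuming $L(C) = A$, given $a,b \in A$ write $a = p - q$ and $b = r - s$ with $p,q,r,s \in C$, and then take $c = p + r$; a direct computation gives $c - a = q + r \in C$ and $c - b = p + s \in C$, so $c$ is the required common upper bound. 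The main (minor) obstacle is finding this explicit witness $c$; once one notices that adding the "denominators" of $a$ and $b$ to themselves works, the rest is mechanical.
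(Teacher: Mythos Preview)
Your proof is correct and follows essentially the same approach as the paper. The only minor variation is in the ``generating $\Rightarrow$ directed'' step: the paper decomposes only the difference $a-b = c-d$ with $c,d\in C$ and takes $e=a+d=b+c$ as the common upper bound, whereas you decompose $a$ and $b$ separately and add their positive parts; both witnesses work and the underlying idea is the same.
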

\begin{proof}
  Let $\leq$ be a compatible preorder on $(A, +)$. Then
  \begin{displaymath}
   C_{\geq 0} =
  \set{a \in A}{0 \leq a} 
  \end{displaymath}
  is a submonoid in 
  $(A, +)$. Further, given a submonoid $C$, define $a \leq_{C} a'$ if $a' - a \in
  C$. This defines a preorder. Clearly both constructions are inverse
  of each other.

  Let $\leq$ be a preorder and let $C=C_{\ge
    0}$. The preorder $\le$ is an order if and only
  if the conditions $a' - a \in
  C$ and $a - a' \in
  C$ imply that $a=a'$. Therefore the preorder is an order if and only 
  if the corresponding submonoid is strict.

  Assume that the preorder $\leq$ is directed. Then given any $a \in A$, one
  can find $c \in A$ such that $a \leq c$ and $0 \leq c$. Therefore $c \in
  C$ and $c - a \in C$. Thus,
  \begin{equation*}
    a = c + a - c = c + (- (c-a)) \in C + (-C)
  \end{equation*}
  Since $a$ was arbitrary, $A = C + (-C)$. Therefore $C$ is generating.

  Conversely, assume that $C$ is generating. If $a, b \in A$, $a - b
  \in A = C + (-C)$. Thus one can 
  find elements $c, d \in C$ such that $a - b = c - d$, or $a + d = b + c
  \overset{\text{def}}{=} e$. Since $d \in C$, $a \leq e$, and since
  $c \in C$, $b \leq e$, proving that $\leq$ is directed.
\end{proof}


\begin{notation}
  Let $A$ be an abelian group.
  \begin{enumerate}
  \item If $C\subset A$ is a submonoid, we denote by $\le_C$ the
    corresponding preorder
    \begin{displaymath}
      a\le_{C} b \iff b-a\in C.
    \end{displaymath}
  \item If $\le$ is a compatible preorder, we denote by $C_{\ge 0}$
    the corresponding submonoid
    \begin{displaymath}
      C_{\ge 0}=\set{a\in A}{0\le a}.
    \end{displaymath}

  \item If $\le$ is a compatible preorder we write
    \begin{gather*}
      a<b\quad \iff \quad a\leq b,\ b\not \leq a,\\
      a\lneqq b\quad \iff \quad a\leq b,\ a\not = b.
    \end{gather*}
    We also write $a \ge b$ (respectively, $a > b$, $a \gneqq b$) if
    and only if $b \le a$ 
	(respectively, $b < a$, $b \lneqq a$).
  \end{enumerate}
  In a preorder the symbols $<$ and $\lneqq$ are not equivalent.
\end{notation}

In view of Lemma \ref{lemm:1} we make the following definition.
\begin{df}
  A \emph{preordered abelian group} is a pair $(A,C)$ where $A$ is an
  abelian group and $C\subset A$ is a submonoid.
\end{df}

  Arbitrary submonoids can have very strange shapes and behave in
  pathological ways. We have to add some conditions to the submonoids to
  have nice properties.
  
 \begin{df} Let $(A,C)$ be a preordered abelian group. The saturation
  of $C$, denoted $C_{\sat}$ is the subset
  \begin{displaymath}
    C_{\sat}=\set{x\in A}{\exists N>0,\ N\in \N,\ Nx\in C}.
  \end{displaymath}
  Clearly, $C_{\sat}$ is again a submonoid.
  The submonoid $C$ and the preorder $\le _{C}$ are called
  \emph{saturated} if $C=C_{\sat}$.
\end{df}

\begin{df} \label{def:6}
  An element $q\in A$ is called a \emph{quasi-zero} if $0\le q$ and
  $q\le 0$. The subset of quasi-zero elements is $Z=C\cap (-C)$, so it
  is a subgroup. Let $\bq \colon A \to A / Z$ be the quotient map.
  \label{dfn:quasizer}
\end{df}

The following result is elementary.

\begin{lem} Let $(A,C)$ be a preordered abelian group and
  $Z=C\cap(-C)$  the subgroup of quasi-zero elements. 
  \begin{enumerate}
  \item The preorder on $A$ induces an order on the quotient $A/Z$
    with corresponding strict submonoid $C/Z$.
  \item If $\le$ is saturated, then $A/Z$ is torsion free.
  \end{enumerate}
\end{lem}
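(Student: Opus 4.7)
My plan is to handle the two parts in sequence, using the correspondence from Lemma~\ref{lemm:1} to identify the order on $A/Z$ with a strict submonoid.

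For part (1), I would first note that since $Z = C \cap (-C) \subset C$, the image $\bq(C)$ in $A/Z$ is a submonoid (it contains $0$ and is closed under addition because $\bq$ is a group homomorphism and $C$ is a submonoid), and moreover $\bq^{-1}(\bq(C)) = C + Z = C$. The key observation is that this submonoid is strict: if $\bq(a) \in \bq(C) \cap (-\bq(C))$, then there exist $c_1, c_2 \in C$ with $a - c_1 \in Z$ and $a + c_2 \in Z$, which forces $a \in C$ and $-a \in C$, hence $a \in C \cap (-C) = Z$, so $\bq(a) = 0$. By Lemma~\ref{lemm:1}, strictness of $\bq(C)$ is equivalent to $\le_{\bq(C)}$ being an order, which is precisely the order induced from $\le_C$ on the quotient.

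For part (2), I would take a torsion element of $A/Z$, represented by some $a \in A$ with $Na \in Z$ for some positive integer $N$. Unpacking $Z = C \cap (-C)$ gives $Na \in C$ and $N(-a) = -Na \in C$. Now I invoke saturation of $C$: both $a$ and $-a$ must lie in $C$, hence $a \in C \cap (-C) = Z$, so $\bq(a) = 0$ in $A/Z$. This shows $A/Z$ has no torsion.

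The proof is essentially a bookkeeping exercise, and I do not anticipate any real obstacle. The only mildly subtle point is in part (1), where one must remember that $Z \subset C$ (so that $C + Z = C$) in order to conclude that a preimage of an element of $\bq(C)$ actually lies in $C$; without this, strictness of $\bq(C)$ would not follow. In part (2), saturation is used precisely once and gives the conclusion immediately.
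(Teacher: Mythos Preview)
Your proof is correct. The paper itself omits the proof, merely noting that ``the following result is elementary,'' and your argument supplies exactly the routine verification one would expect: strictness of $C/Z$ via the inclusion $Z\subset C$ (so $C+Z=C$), and torsion-freeness of $A/Z$ by applying saturation to both $Na\in C$ and $N(-a)\in C$.
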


\begin{df}\label{def:13}
   A \emph{cone} $C$ in $\R^{n}$ is a convex
  subset such that
  \begin{displaymath}
    x\in C, \lambda \ge 0 \implies \lambda x \in C
  \end{displaymath}
  and a cone of $\Q^{n}$ is the intersection of a cone of $\R^{n}$
  with $\Q^{n}$. 
\end{df}

  \begin{df}
    Let $A$ be a finitely generated (f. g.) abelian group and denote by
    $\iota \colon A \to 
    A\otimes \Q$ the canonical map.
    \begin{enumerate}
    \item   A submonoid $C$ is called \emph{a cone} if it is of the form
      $C=\iota^{-1}(C')$ for a cone $C'$ in $A\otimes \Q$.
    \item A cone is called \emph{strict} (resp. \emph{generating}) if
      the corresponding submonoid is strict (resp. generating).
    \item A cone $C$ (or the corresponding preorder)  is called
      \emph{polyhedral} if it is of the form
      $C=\iota^{-1}(C')$ for a rational polyhedral cone $C'$ in
      $A\otimes \Q$. 
    \end{enumerate}
  We will denote by $\iota _{\R}$ the canonical map $A\to A\otimes
  \R$. 
\end{df}

\begin{lem} \label{lemm:4} Let $A$ be a f.~g.~abelian group  and $C$ a
  submonoid. Then $C_{\sat}$ is a 
  cone and $C$ is a saturated submonoid if and only if it is a cone.
\end{lem}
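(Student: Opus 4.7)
The plan is to prove the first assertion by constructing an explicit cone $D$ in $A\otimes \R$ with $\iota_{\R}^{-1}(D) = C_{\sat}$. Granted this, setting $C' = D\cap (A\otimes\Q)$ produces a cone in $A\otimes\Q$ with $\iota^{-1}(C') = C_{\sat}$, so by Definition~\ref{def:13} the set $C_{\sat}$ is a cone in $A$. The equivalence in the second assertion then follows by a short formal argument: if $C = \iota_{\R}^{-1}(D')$ is a cone, then $Nx \in C$ forces $\iota_{\R}(x) = (1/N)\iota_{\R}(Nx) \in D'$ by closure of $D'$ under non-negative scaling, hence $C_{\sat} \subseteq C$ and $C$ is saturated; conversely, if $C$ is saturated then $C = C_{\sat}$, which is a cone by the first part.

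For the construction, I would take the convex cone in $A\otimes \R$ generated by $\iota_{\R}(C)$, namely
\begin{equation*}
  D = \set{\textstyle\sum_{i=1}^{k} \lambda_{i}\iota_{\R}(c_{i})}{k \in \N,\ \lambda_{i} \in \R_{\geq 0},\ c_{i}\in C}.
\end{equation*}
Since $C$ is a submonoid, $D$ is visibly closed under addition and under multiplication by non-negative reals, so it is a cone in the sense of Definition~\ref{def:13}. The inclusion $C_{\sat} \subseteq \iota_{\R}^{-1}(D)$ is immediate: if $Nx\in C$ for some $N>0$ then $\iota_{\R}(x) = (1/N)\iota_{\R}(Nx) \in D$.

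The main obstacle is the reverse inclusion $\iota_{\R}^{-1}(D) \subseteq C_{\sat}$, where a real-coefficient representation of $\iota_{\R}(x)$ has to be upgraded to an integer one. Given $x \in A$ with $\iota_{\R}(x) = \sum \lambda_{i}\iota_{\R}(c_{i})$, $\lambda_{i} \in \R_{\geq 0}$ and $c_{i}\in C$, I plan to proceed as follows. By Carath\'eodory's theorem for cones applied in $A\otimes \R$, I may discard terms to reduce to the case where the vectors $\iota_{\R}(c_{i})$ are linearly independent. Since $\iota_{\R}(x)$ and the $\iota_{\R}(c_{i})$ all lie in the rational subspace $A\otimes\Q \subset A \otimes \R$, the uniqueness of coordinates in any extension of these vectors to a $\Q$-basis of $A\otimes \Q$ forces each $\lambda_{i}$ to be rational. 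Clearing denominators yields a positive integer $N$ and non-negative integers $n_{i}$ with $N\iota(x) = \iota\bigl(\sum n_{i} c_{i}\bigr)$, so $Nx - \sum n_{i} c_{i}$ lies in the torsion subgroup $\ker \iota \subset A$, which is finite as $A$ is finitely generated. Multiplying by an integer $M > 0$ annihilating this torsion element then gives $MNx = M\sum n_{i} c_{i} \in C$, since $C$ is closed under sums and under multiplication by positive integers. This shows $x \in C_{\sat}$ and completes the plan.
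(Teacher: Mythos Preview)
Your proof is correct and follows essentially the same route as the paper: both show that $C_{\sat}$ equals the $\iota$-preimage of the convex cone generated by $C$, then use the finiteness of torsion in $A$ to pass from rational to integral combinations. The only difference is that you work in $A\otimes\R$ and invoke Carath\'eodory's theorem to extract rational coefficients from a real representation, whereas the paper works directly with the rational cone in $A\otimes\Q$; this is a minor variation (and has the small benefit of making explicit why the rational cone is a cone in the paper's sense, i.e.\ the trace of a real cone), not a genuinely different approach.
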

\begin{proof}
  Let $C'\subset A \otimes \Q$ be the cone of $A \otimes \Q$ generated by
  $C$.  If we see that $C_{\sat}=\iota ^{-1}(C')$, then we deduce that
  $C_{\sat}$ is a 
  cone. Hence if $C=C_{\sat}$, then $C$ is also
  a cone. Conversely, if $C$ is a cone, then $C=\iota^{-1}(C')$,
  therefore $C=C_{\sat}$. Hence we only need to prove that
  $C_{\sat}=\iota ^{-1}(C')$.

  If $y\in \iota ^{-1}(C')$, then we can write
   \begin{displaymath}
    \iota (y)=\sum \frac{n_{i}}{N}\iota(x_{i}),
  \end{displaymath}
  where $N$ and $n_{i}$ are positive integers and $x_{i}\in
  C$. Therefore $\iota(Ny-\sum n_{i}x_{i})=0$. Thus $Ny-\sum
  n_{i}x_{i}$ is torsion and there is an $N'\in \Z_{>0}$ such that
  $N'Ny=\sum N'n_{i}x_{i}\in C$. Therefore $y\in C_{\sat}$.

  Conversely, if $y\in C_{\sat}$, then $Ny\in C$, so $N\iota(y)=\iota
  (Ny)\in C'$. Being $C'$ a cone of $A\otimes \Q$ we deduce that
  $\iota(y)\in C'$. Therefore $y\in \iota ^{-1}(C')$.
\end{proof}

\begin{rmk}
  A submonoid of a f.~g.~abelian group is a polyhedral cone if and
  only if it is saturated and 
  finitely generated.
\end{rmk}

\begin{ex}\label{exm:1}
  The subset of $\Z^{2}$ given by
  \begin{displaymath}
    (x,y)\in C \iff
    \begin{cases}
      x\le 0, \  y\ge 0, &\text{ or}\\
       x > 0, \ y >0, 
    \end{cases}
  \end{displaymath}
  is a cone but is not a polyhedral cone.
\end{ex}

The following result is very useful. It is the analogue of the
existence of ample line bundles in geometry. Intuitively, it says that
any top-dimensional cone has a point in its interior.

\begin{lem}\label{lemm:7}
  Let $(A,C)$ be a preordered f.~g.~abelian group with $C$ a
  generating cone. Then there is an element $a\in C$ such that, for
  all $x\in A$, there is an $n\in \N$ with
  \begin{displaymath}
    x\le na.
  \end{displaymath}
\end{lem}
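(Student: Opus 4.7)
The plan is to reduce the statement to a convex-analytic fact about the interior of the associated real cone. By Lemma \ref{lemm:4}, $C$ is a cone, and using Definition \ref{def:13} we may write $C = \iota_\R^{-1}(V')$ for a convex cone $V' \subset A\otimes \R$. The point $a$ we seek will be any element of $A$ whose image in $A\otimes\R$ lies in the interior of $V'$.

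First I would verify that $V'$ has nonempty interior (this is the "ample" step). Since $C$ is generating, $\iota_\R(A) \subset \iota_\R(C) - \iota_\R(C) \subset V' - V'$. The set $V' - V'$ is an $\R$-linear subspace of $A\otimes\R$ (a convex cone containing $0$ is closed under non-negative scaling, and $V' - V'$ is then closed under addition, negation, and positive scaling, hence all scaling), and it contains $\iota_\R(A)$, which spans $A\otimes\R$ over $\R$ because $A$ is finitely generated of full rank in $A\otimes\R$. Therefore $V' - V' = A\otimes\R$. For a convex subset of a finite-dimensional real vector space, this is equivalent to having nonempty interior.

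Next I would extract an integer point inside that interior. Because $\iota(A) \cdot \Q = A\otimes\Q$ is dense in $A\otimes\R$, the open set $\mathrm{int}(V')$ contains a rational point $v = \iota(a)/m$ for some $a\in A$ and positive integer $m$. Since $V'$ is a cone and the interior of a cone is preserved under multiplication by positive scalars, $\iota_\R(a) = m\cdot v \in \mathrm{int}(V')$. In particular $a \in \iota_\R^{-1}(V') = C$.

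Finally I would check this $a$ works. Given any $x \in A$, the sequence $\iota_\R(a) - \tfrac{1}{n}\iota_\R(x)$ converges to $\iota_\R(a)$, an interior point of $V'$, so for $n$ sufficiently large it lies in $V'$. Multiplying by $n$ and using that $V'$ is a cone yields $n\iota_\R(a) - \iota_\R(x) \in V'$, whence $na - x \in \iota_\R^{-1}(V') = C$, i.e.\ $x \le_{C} na$. The main delicate point is the interior step: the interplay between the generating hypothesis on $C$ (a statement about integer points) and the nonemptiness of $\mathrm{int}(V')$ (a statement in real convex geometry) is where finite generation of $A$, and hence finite dimensionality of $A\otimes\R$, is essential.
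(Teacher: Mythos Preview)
Your argument is correct, but it takes a different route from the paper, and there is one imprecision worth flagging. The paper defines a cone in $A$ as $\iota^{-1}(C')$ for a cone $C'\subset A\otimes\Q$, not as $\iota_\R^{-1}$ of a real cone; Definition~\ref{def:13} and Lemma~\ref{lemm:4} do not directly give you $C=\iota_\R^{-1}(V')$. To justify this, take $V'$ to be the real convex cone generated by $\iota_\R(C)$. The inclusion $C\subset\iota_\R^{-1}(V')$ is trivial; for the reverse, any $x\in A$ with $\iota_\R(x)\in V'$ lies in a subcone generated by finitely many $\iota_\R(c_i)$ with $c_i\in C$, which is rational polyhedral, so $\iota(x)$ is already a $\Q_{\ge 0}$-combination of the $\iota(c_i)$ and hence $x\in C$ by saturation. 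With this in place your interior argument goes through.

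The paper's proof is shorter and purely algebraic: since $C$ is generating one picks $x_1,\dots,x_r\in C$ whose images are a $\Q$-basis of $A\otimes\Q$, sets $a=\sum_i x_i$, writes $\iota(x)=\sum_i p_i\iota(x_i)$ with $p_i\in\Q$, and takes any integer $n\ge\max_i p_i$; then $\iota(na-x)=\sum_i(n-p_i)\iota(x_i)$ lies in the simplicial subcone $\sum_i\Q_{\ge 0}\,\iota(x_i)\subset C'$, so $na-x\in C$. This avoids real convexity and interior points entirely and makes $a$ explicit. Your approach, by contrast, pins down $a$ conceptually as any lattice point in the interior of the associated real cone, which is exactly the ``ample'' intuition the paper alludes to before the lemma, and would carry over verbatim to the real-indexed parallel theory mentioned in the introduction.
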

\begin{proof}
    Since $C$ is generating, it contains a basis
  $\{x_{1},\dots,x_{r}\}$ of the vector space $A\otimes \Q$. Write
  $a=\sum_{i}x_{i}\in C$. Let $x\in A$, then $\iota(x)=\sum
  p_{i}x_{i}$. Choose $n\ge 0$ such that $n\ge p_{i}$,
  $i=1,\dots,r$. Since $C$ is a cone, $C=\iota ^{-1}(C')$ for a cone
  $C'\subset A\otimes \Q$. Then $na-x\in A$ and
  \begin{displaymath}
    \iota(na-x)\in \sum_{i=1}^{r}\Q_{\ge 0}x_{i}\subset C'.
  \end{displaymath}
  Therefore $na-x\in C$ proving the result. 
 \end{proof}

\begin{lem} \label{lemm:6}
  Let $A$ be a f.~g.~abelian group. If a strict cone $C\subset A$ exists,
  then $A$ is torsion free.
\end{lem}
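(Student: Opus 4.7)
The plan is to show directly that any torsion element of $A$ must be zero, using the description of a cone as the preimage under $\iota \colon A \to A \otimes \Q$ of a cone $C' \subset A \otimes \Q$, together with strictness.

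First I would unpack the hypothesis: by definition of a cone in a f.g.\ abelian group (Definition after Lemma \ref{lemm:4}), we can write $C = \iota^{-1}(C')$ for some cone $C' \subset A \otimes \Q$. Since $C'$ is a cone in a rational vector space, $0 \in C'$ (it is convex and closed under nonnegative scaling, so contains $0$).

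Next, let $t \in A$ be any torsion element, so that $Nt = 0$ in $A$ for some positive integer $N$. Then $\iota(t) = 0$ in $A \otimes \Q$, because the canonical map kills torsion. Since $0 \in C'$, we conclude $t \in \iota^{-1}(C') = C$. The same argument applied to $-t$, which is also torsion, gives $-t \in C$. Therefore $t \in C \cap (-C)$.

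Finally, I would invoke strictness: $C \cap (-C) = \{0\}$, so $t = 0$. Since the torsion element $t$ was arbitrary, the torsion subgroup of $A$ is trivial, i.e.\ $A$ is torsion free. There is no real obstacle here; the argument is a one-line consequence of the definitions, the key observation being simply that $\iota$ annihilates torsion so every torsion element automatically lies in any cone.
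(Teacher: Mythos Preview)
Your proof is correct and follows essentially the same approach as the paper: torsion elements map to $0$ under $\iota$, hence lie in $C$ (and in $-C$), and strictness forces them to be zero. The paper phrases it in terms of the whole torsion subgroup $T$ at once (using $T=-T$) rather than element by element, but the argument is identical.
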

\begin{proof}
  Let $C\subset A$ be a strict cone and $T\subset A$ the subgroup of
  torsion elements. Since $\iota(T)=0$ and $C$ is a cone, we deduce
  that $T\subset C$. Since $T=-T$, then $T\subset (-C)\cap C$. Since
  $C$ is strict, $T=0$.
\end{proof}

We define strongly strict submonoids here which will play a role later (in
Lemma~\ref{lemm:2}).

\begin{df} \label{dfn:strspcon}
  A submonoid $C $ of $A$ is called \emph{strongly strict} if
  there exist a closed cone $D\subset A\otimes \R$ with $D\cap
  (-D)=\setl{0}$ and $C\subset \iota _{\R}^{-1}(D)$.  
\end{df}

The interest of strongly strict cones comes from the following
separation result.

\begin{lem} \label{lemm:5}
  Let $D\subset \R^{n}$ be a closed cone such that $D\cap
  (-D)=\setl{0}$. Then there is a linear form $\omega \colon \R^{n}\to
  \R$ such that $D\cap \ker(\omega )=\setl{0}$  and $\omega (x)\ge 0$
  for all $x\in D$. In particular $\omega (x)>0$ for all $0\not = x
  \in D$.
\end{lem}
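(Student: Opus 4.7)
The plan is to prove this via a compactness argument combined with Hahn--Banach separation. The key idea is that a closed salient convex cone can be ``squeezed" into an open half-space by intersecting with the unit sphere and separating.

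First I would introduce the compact slice $K = D \cap S^{n-1}$, where $S^{n-1}\subset \R^{n}$ is the unit sphere. Since $D$ is closed and $S^{n-1}$ is compact, $K$ is compact. The goal will be to find a linear form $\omega$ strictly positive on $K$; this suffices because every nonzero $x\in D$ can be written as $x = \lVert x\rVert \cdot (x/\lVert x\rVert)$ with $x/\lVert x\rVert \in K$, so $\omega(x) = \lVert x\rVert \omega(x/\lVert x\rVert) > 0$, while $\omega(0)=0$, giving both $\omega\ge 0$ on $D$ and $D\cap \ker\omega = \{0\}$.

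To obtain such an $\omega$, I would consider the convex hull $\mathrm{conv}(K)$. By Carathéodory's theorem in $\R^{n}$, $\mathrm{conv}(K)$ is compact (convex hull of a compact set in finite dimension). The crucial step is to show $0 \notin \mathrm{conv}(K)$. Suppose, for contradiction, $0 = \sum_{i=1}^{m}\lambda_i x_i$ with $x_i\in K$, $\lambda_i \ge 0$, $\sum \lambda_i = 1$. Assume without loss of generality $\lambda_1>0$. Since $D$ is a convex cone (by Definition~\ref{def:13}), $\sum_{i\ge 2}\lambda_i x_i \in D$, so
\begin{equation*}
  \lambda_1 x_1 \;=\; -\sum_{i\ge 2}\lambda_i x_i \;\in\; D\cap(-D) \;=\; \{0\}.
\end{equation*}
Hence $x_1 = 0$, contradicting $x_1 \in S^{n-1}$. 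So $0 \notin \mathrm{conv}(K)$.

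Finally I would apply the Hahn--Banach separation theorem (in its finite-dimensional form for a point disjoint from a compact convex set): there exists a linear form $\omega \colon \R^{n}\to \R$ and a constant $c>0$ with $\omega(y) \ge c$ for all $y\in \mathrm{conv}(K)$, in particular $\omega(x) \ge c > 0$ for all $x\in K$. Combined with the scaling argument above, this proves the claim. The main step to get right is verifying $0\notin \mathrm{conv}(K)$, since that is where the hypotheses of closedness and salience ($D\cap(-D)=\{0\}$) are used essentially, together with the convexity of the cone $D$.
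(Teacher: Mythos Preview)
Your proof is correct and follows essentially the same strategy as the paper: slice the cone by the unit sphere, take a convex hull, show that $0$ does not lie in it via the salience hypothesis $D\cap(-D)=\{0\}$, and then invoke a separation theorem. The combinatorial core---writing $0$ as a convex combination of unit vectors in the cone and deriving a contradiction---is literally the same argument in both.

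The one noteworthy difference is in how the separation step is set up. The paper takes $E=\mathrm{conv}\bigl((-D)\cap S^{n-1}\bigr)$ and separates the compact set $E$ from the (unbounded) cone $D$; this requires checking that $E$ and $D$ share no recession direction and then appealing to two results from Rockafellar (Corollary~11.4.1 and Theorem~11.7) to obtain a strongly separating hyperplane through the origin. Your version instead separates the single point $\{0\}$ from the compact convex set $\mathrm{conv}(D\cap S^{n-1})$, which needs only the most elementary form of Hahn--Banach (point versus compact convex set). You then recover positivity on all of $D\setminus\{0\}$ by scaling. This is a cleaner route to the same conclusion and avoids the recession-cone bookkeeping.
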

\begin{proof}
  Choose an Euclidean norm in $\R^{n}$ and let $S^{n-1}$ be the unit
  sphere in $\R^{n}$. Write $E$ for the convex hull of $(-D)\cap
  S^{n-1}$. Recall that our definition of cone includes being
  convex (cf. Definition \ref{def:13}). Thus $E$ and $D$ are closed
  convex subsets. We check that 
  they have empty intersection. Indeed if $x\in E\cap D$, since
  $E\subset -D$ and $(-D)\cap D=0$, we deduce that $x=0$. If $0\in E$,
  by the definition of $E$, we can write
  \begin{displaymath}
    0=\sum_{i=1}^{k}\alpha _{i}x_{i},
  \end{displaymath}
  with $k\ge 2$, $\sum \alpha _{i}=1$, and for  $i=1,\dots,k$, $\alpha
  _{i}>0$, , $x_{i}\in -D$ and 
  $\|x_{i}\|=1$. Then
  \begin{displaymath}
    -D \ni x_{1} = -\sum_{i=2}^{k} \frac{\alpha _{i}}{\alpha
      _{1}}x_{i}\in D,
  \end{displaymath}
  which implies that $x_{1}=0$, contradicting that $\|x_{1}\|=1$.

  Moreover, $E$ being compact, $E$ and $D$ do not have any
  recession direction in common. By \cite[Corollary
  11.4.1]{Rockafellar} they are strongly separated. $D$ being a cone,
  by \cite[Theorem 11.7]{Rockafellar} they can be strongly separated
  by an hyperplane through the origin. Statement that is equivalent to
  the existence of $\omega $ as in the lemma.
\end{proof}

\begin{cor}\label{cor:1} Let $A$ be a f. g. abelian group, 
  $C$ a strongly strict submonoid, $Z\subset C$ the subgroup of
  quasi-zero elements, $D\subset A\otimes \R$ a cone as in
  Definition \ref{dfn:strspcon} and $\omega $ the linear form
  provided by Lemma \ref{lemm:5}. Then
  \begin{displaymath}
    \inf \set{\omega (\iota_{\R}(x))}{x\in C\setminus Z} > 0.
  \end{displaymath}
\end{cor}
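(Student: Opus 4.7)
By Definition \ref{dfn:strspcon} we have $\iota_\R(C) \subset D$, and by Lemma \ref{lemm:5} the functional $\omega$ is non-negative on $D$ and strictly positive on $D \setminus \{0\}$. The first routine observation is that every $x \in C \setminus Z$ satisfies $\iota_\R(x) \neq 0$: indeed, if $\iota_\R(x) = 0$ then $Nx = 0$ for some integer $N \geq 1$, so $-x = (N-1)x \in C$ (interpreted as $0$ when $N=1$), whence $x \in C \cap (-C) = Z$. In particular, $\omega(\iota_\R(x)) > 0$ for every $x \in C \setminus Z$; the task is to upgrade this pointwise positivity to a positive infimum.

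The idea is to exploit the lattice $L := \iota_\R(A) \subset A \otimes \R$, which is a discrete subgroup (isomorphic to $A$ modulo its torsion subgroup). Fix any Euclidean norm on $A \otimes \R$ and consider the slice
\begin{equation*}
K = \set{y \in D}{\omega(y) \leq 1}.
\end{equation*}
I claim $K$ is compact. Closedness is clear, so only boundedness needs checking. If some sequence $(y_n) \subset K$ had $\|y_n\| \to \infty$, then since $D$ is a cone the vectors $y_n/\|y_n\|$ would all lie in $D$ on the unit sphere, so a subsequence would converge to some $u \in D$ with $\|u\| = 1$ (using that $D$ is closed); but then $\omega(u) = \lim \omega(y_n)/\|y_n\| = 0$, contradicting $D \cap \ker(\omega) = \setl{0}$.

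Since $L$ is discrete and $K$ is compact, $L \cap K$ is finite, so $\omega$ attains a minimum $m > 0$ on $(L \cap K) \setminus \setl{0}$. For any $x \in C \setminus Z$, either $\omega(\iota_\R(x)) > 1$, or $\iota_\R(x)$ is a nonzero element of $L \cap K$ and so $\omega(\iota_\R(x)) \geq m$; in either case, $\omega(\iota_\R(x)) \geq \min(1,m) > 0$, which gives the required positive lower bound. The main obstacle is precisely upgrading ``strictly positive values'' to ``strictly positive infimum''---a priori the values $\omega(\iota_\R(x))$ could approach $0$ along directions close to $\ker(\omega)$, and this is what the compactness of $K$ combined with the discreteness of the lattice $L$ rules out.
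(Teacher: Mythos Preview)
Your proof is correct and follows essentially the same approach as the paper: both arguments combine the discreteness of the lattice $\iota_\R(A)$ with the coerciveness of $\omega$ on $D$. The paper extracts the linear bound $\omega(x)\ge K_2\|x\|$ for $x\in D$ directly from the strong separation in the proof of Lemma~\ref{lemm:5} and then multiplies by the lattice minimum $K_1$, whereas you phrase the same coerciveness as compactness of the slice $\{y\in D:\omega(y)\le 1\}$ and argue via finiteness of its lattice points---a slightly more self-contained packaging that uses only the statement of Lemma~\ref{lemm:5}.
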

\begin{proof}
  Choose an Euclidean norm in $A\otimes \R$.
  The elements $\iota _{\R}(A)$ form a lattice of $A\otimes
  \R$. Therefore there is a constant $K_{1}>0$ such that $\|x\|\ge
  K_{1}$ for any $0\not = x \in \iota _{\R}(A)$. By the construction
  of $\omega $, there is a constant $K_{2}>0$ such that, for all $x\in
  D$, we have $\omega (x)\ge K_{2}\|x\|$. Combining these two
  inequalities we deduce that $\omega (x) \ge K_{1}K_{2}$ for all $0\not
  = x\in\iota_{R}(A)\cap D$.

  If $x\in C\setminus Z$, then $\iota_{\R}(x)\not =0$ and
  $\iota_{\R}(x)\in \iota_{R}(A)\cap D$ from which the result follows. 
\end{proof}

\begin{ex}
  Let $C'$ be a strictly convex polyhedral cone of $A\otimes \Q$. Then
  $\iota ^{-1}(C')$ is a strongly strict cone. 
\end{ex}

\begin{ex}  \label{exm:ststrict}
  The cone in Example \ref{exm:1}, is strict but not strongly
  strict. In the other direction,  if a f. g. abelian group $A$ has
  torsion, then strongly strict submonoids  
  of $A$ do not need to be strict cones. If $A$ is
  torsion free, then any strongly strict submonoid is strict. 
\end{ex}

\begin{rmk}
  When dealing with infinitely generated abelian groups, like real
  vector spaces, it is useful to have a topology and work with partial
  orders that are compatible with the topology. Due to the applications
  we have in mind,  we will restrict ourselves to finitely generated
  abelian groups.
\end{rmk}

\section{Multifiltrations}
\label{sec:poset-filtration}

In this section we study some basic properties of
multifiltrations indexed by a finitely generated preordered abelian
group. For simplicity of the exposition we will restrict ourselves to
multifiltrations on finite dimensional vector spaces, but most of the
definitions and results carry over to more general situations.

In this section, by a vector space we mean a finite dimensional
vector space.  Let $E$ be such a  vector space. 

\begin{df}\label{def:7}
  A decreasing multifiltration $F$ on $E$ is the data of a
  f.~g.~preordered abelian group $(A,C)$ and a collection of
  subobjects $\{F^{\lambda }E\}_{\lambda \in A}$,
  $F^{\lambda }E\subset E$, such that
  \begin{displaymath}
    \lambda \le \mu \Longrightarrow F^{\lambda }E\supset F^{\mu }E.
  \end{displaymath}
  The preordered abelian group $(A,C)$ is called the \emph{index set}
  of the multifiltration.

  There is the analogous definition of an increasing multifiltration
  $\{F_{\lambda } E\}_{\lambda \in A}$. 
\end{df}

\begin{rmk}
  If we fix the set of indices $(A,C)$, then the class of finite
  dimensional vector spaces provided with a multifiltration with index
  set $(A,C)$ is a category. 
\end{rmk}

We can induce multifiltrations on subspaces and quotients.

\begin{df}\label{def:8}
  With $(A,C)$ a preordered abelian group, $E$ a vector space,
  $S\subset E$ a vector subspace and $Q=E/S$ the quotient, let $F$ be
  a multifiltration on 
  $E$ indexed by $(A,C)$. The \emph{subspace multifiltration} and the
  \emph{quotient multifiltrations}, both denoted also by $F$ are given
  by 
  \begin{align*}
    F^{\lambda }S&= S\cap F^{\lambda } E,\\
    F^{\lambda} Q&= \frac{F^{\lambda }E}{F^{\lambda }E \cap S}
  \end{align*}
  respectively.
\end{df}

Multifiltrations can be shifted.
\begin{df}\label{def:9}
    With $(A,C)$ a preordered abelian group, $E$ a vector space,
    let $F$ be a multifiltration on $E$ indexed by $(A,C)$ and
    $\lambda \in A$. The shifted multifiltration $F[\lambda ]$ is given
    by
    \begin{displaymath}
      F[\lambda ]^{\mu }E=F^{\lambda +\mu }E.
    \end{displaymath}
  \end{df}

  \begin{rmk}
    The shift of multifiltrations is an equivalence of categories. 
  \end{rmk}

We can define two functors that allow us to change the set of
indices. 

\begin{df}\label{def:1}
Let $\varphi\colon A\to A'$ be a morphisms of abelian groups and $C,C'$
submonoids in $A$ and $A'$ respectively such that $\varphi(C)\subset
C'$. Then the map $\varphi$ is order preserving. In this case we say
that $\varphi\colon (A,C)\to (A',C')$ is a morphism of preordered
abelian groups. From $\varphi$ we can define two
functors on multifiltered vector spaces.
\begin{enumerate}
\item Let $F$ be a multifiltration on
  $E$ indexed by $(A',C')$. Then the restricted multifiltration,
  $\Res^{\varphi}(F)$ is 
  defined as
  \begin{displaymath}
    \Res^{\varphi}(F)^{\lambda }E=F^{\varphi(\lambda )}E, \ \lambda \in A.
  \end{displaymath}
\item Let $F$ be a multifiltration on
  $E$ indexed by $(A,C)$. Then the induced multifiltration
  $\Ind_{\varphi}(F)$ is
  defined as
  \begin{displaymath}
    \Ind_{\varphi}(F)^{\lambda}E=\sum_{\mu \colon \lambda \underset{C'}{\le}
      \varphi(\mu )}F^{\mu }E, \ \lambda \in A'.
  \end{displaymath}
\end{enumerate}
\end{df}

The proof of the next result about adjointness of $\Ind$ and $\Res$ is
elementary.

\begin{prop} Let $\varphi \colon (A,C)\to (A',C')$ be a morphism of
  preordered groups. Then 
  the functors $\Ind_{\varphi}$ and $\Res^{\varphi}$ form an adjoint
  pair. That is, for every pair of multifiltered vector spaces $(E,F)$
  and $(E',F')$ with index sets $(A,C)$ and $(A',C')$,
  \begin{displaymath}
    \Hom\big((E,\Ind_{\varphi}(F)),(E',F')\big)=\Hom\big((E,F),(E',\Res^{\varphi}(F'))\big).  
  \end{displaymath}
\end{prop}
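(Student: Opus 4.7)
The plan is to unpack both sides of the claimed bijection directly, since a morphism $f\colon (E,G)\to (E',H)$ of multifiltered vector spaces with common index set is nothing more than a linear map $f\colon E\to E'$ satisfying $f(G^{\lambda}E)\subset H^{\lambda}E'$ for every index $\lambda$. Thus both Hom-sets sit inside $\Hom(E,E')$, and it suffices to show that the two collections of inclusion conditions are equivalent. Before doing that, I would briefly verify that $\Ind_{\varphi}(F)$ and $\Res^{\varphi}(F')$ really are multifiltrations with the claimed index sets: for $\Res^{\varphi}$ this is immediate from $\varphi$ being order preserving, while for $\Ind_{\varphi}$ it follows because enlarging $\lambda$ in $A'$ shrinks the indexing set $\{\mu : \lambda \le_{C'}\varphi(\mu)\}$ and hence shrinks the corresponding sum of subspaces.

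For the forward implication, suppose $f\bigl(\Ind_{\varphi}(F)^{\lambda}E\bigr)\subset (F')^{\lambda}E'$ for all $\lambda\in A'$. Given any $\mu\in A$, the reflexivity $\varphi(\mu)\le_{C'}\varphi(\mu)$ shows that $\mu$ itself occurs in the index set defining $\Ind_{\varphi}(F)^{\varphi(\mu)}E$, so $F^{\mu}E\subset \Ind_{\varphi}(F)^{\varphi(\mu)}E$. Applying $f$ and the hypothesis at $\lambda=\varphi(\mu)$ gives $f(F^{\mu}E)\subset (F')^{\varphi(\mu)}E'=\Res^{\varphi}(F')^{\mu}E'$, which is exactly the condition for $f$ to lie in the right-hand Hom-set.

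For the reverse implication, suppose $f(F^{\mu}E)\subset (F')^{\varphi(\mu)}E'$ for every $\mu\in A$. Fix $\lambda\in A'$ and any $\mu$ appearing in the sum defining $\Ind_{\varphi}(F)^{\lambda}E$, i.e.\ with $\lambda\le_{C'}\varphi(\mu)$. Then the hypothesis gives $f(F^{\mu}E)\subset (F')^{\varphi(\mu)}E'$, and the decreasing property of $F'$ together with $\lambda\le_{C'}\varphi(\mu)$ gives $(F')^{\varphi(\mu)}E'\subset (F')^{\lambda}E'$. Summing over all admissible $\mu$ yields $f\bigl(\Ind_{\varphi}(F)^{\lambda}E\bigr)\subset (F')^{\lambda}E'$, establishing the bijection. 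Naturality in $(E,F)$ and $(E',F')$ is automatic because the bijection is the identity on the underlying linear map $f$.

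The statement has no genuine obstacle — the entire content is the reflexivity of $\le_{C'}$ (for one direction) and the decreasing property of $F'$ combined with the defining inequality in $\Ind_{\varphi}$ (for the other). The only point worth flagging is that the proof really does use $\varphi(C)\subset C'$ indirectly, in that it is needed to make $\Res^{\varphi}$ preserve the multifiltration property, without which the right-hand Hom-set would not even be defined.
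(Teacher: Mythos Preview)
Your proof is correct and complete. The paper itself does not give a proof of this proposition at all --- it merely remarks that ``the proof of the next result about adjointness of $\Ind$ and $\Res$ is elementary'' --- so your argument is in fact more detailed than what appears in the paper, and is exactly the standard verification one would expect.
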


\begin{rmk}\label{rem:1}
  Combining the restriction and induction of representations of
  Definition \ref{def:1} with the shift of Definition \ref{def:9}, we
  can define restriction and induction of representations with respect
  to ``affine'' maps of the form $\lambda \mapsto \varphi(\lambda
  )+\mu $, where $\varphi\colon (A,C)\to (A',C')$ is a morphism of
  preordered abelian groups and $\mu \in A$.
\end{rmk}

The next result follow easily from the definitions.
\begin{prop}\label{prop:7}
  Let $\varphi\colon A\to A'$ and $\psi \colon A'\to A''$ be morphisms
  of abelian groups and $C$, $C'$ and $C''$ 
submonoids in $A$, $A'$ and $A''$ respectively such that $\varphi(C)\subset
C'$ and $\psi (C')\subset C''$. Then
\begin{displaymath}
  \Ind _{\psi \circ \varphi} = \Ind_{\psi }\circ \Ind_{\varphi}
  \quad \text{ and }\quad
  \Res^{\psi \circ \varphi}=\Res^{\varphi}\circ \Res^{\psi }.
\end{displaymath}
\end{prop}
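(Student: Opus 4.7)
The plan is to verify both equalities directly from the definitions, treating $\Res$ and $\Ind$ separately. For $\Res$ the computation is purely tautological: given a multifiltration $F$ on $E$ indexed by $(A'',C'')$, at any $\lambda\in A$ one has
\[
\Res^{\varphi}(\Res^{\psi}(F))^{\lambda}E = \Res^{\psi}(F)^{\varphi(\lambda)}E = F^{\psi(\varphi(\lambda))}E = \Res^{\psi\circ\varphi}(F)^{\lambda}E,
\]
using nothing beyond Definition~\ref{def:1}(1) and the fact that $\psi\circ\varphi$ preserves the preorder (which is automatic from $\varphi(C)\subset C'$ and $\psi(C')\subset C''$).

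For $\Ind$, fix a multifiltration $F$ on $E$ indexed by $(A,C)$ and an index $\lambda\in A''$. Unwinding Definition~\ref{def:1}(2) twice gives
\[
\Ind_{\psi}\bigl(\Ind_{\varphi}(F)\bigr)^{\lambda}E = \sum_{\substack{\mu\in A'\\ \lambda \le_{C''} \psi(\mu)}}\ \sum_{\substack{\nu\in A\\ \mu \le_{C'} \varphi(\nu)}} F^{\nu}E,
\]
while the other side is
\[
\Ind_{\psi\circ\varphi}(F)^{\lambda}E = \sum_{\substack{\nu\in A\\ \lambda \le_{C''} \psi(\varphi(\nu))}} F^{\nu}E.
\]
The inclusion $\supseteq$ is obtained by taking $\mu = \varphi(\nu)$ in the double sum: then $\mu \le_{C'} \varphi(\nu)$ because $0\in C'$, and $\lambda \le_{C''} \psi(\mu) = \psi(\varphi(\nu))$ by hypothesis, so every $F^{\nu}E$ appearing on the right also appears on the left. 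For $\subseteq$, if $\mu\in A'$ and $\nu\in A$ satisfy $\mu \le_{C'} \varphi(\nu)$, then order-preservation of $\psi$ gives $\psi(\mu) \le_{C''} \psi(\varphi(\nu))$; combined with $\lambda \le_{C''} \psi(\mu)$ and transitivity, this yields $\lambda \le_{C''} \psi(\varphi(\nu))$, so $F^{\nu}E$ is already summed on the right.

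An alternative, less explicit argument for the $\Ind$ equality would invoke the preceding adjunction: since $\Res^{\psi\circ\varphi} = \Res^{\varphi}\circ\Res^{\psi}$ is a composition of right adjoints, its left adjoint is $\Ind_{\psi}\circ\Ind_{\varphi}$, which must therefore be naturally isomorphic to the left adjoint $\Ind_{\psi\circ\varphi}$ of $\Res^{\psi\circ\varphi}$; one then checks from the explicit formulas that this isomorphism is the identity. I do not anticipate any real obstacle: the sole content of the proposition is that $\psi$ preserves the preorder, and that single fact is exactly what makes the double sum computing $\Ind_{\psi}\Ind_{\varphi}$ collapse to the single sum computing $\Ind_{\psi\circ\varphi}$.
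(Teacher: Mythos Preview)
Your proof is correct and matches the paper's approach: the paper simply remarks that the result ``follow[s] easily from the definitions'' and gives no explicit argument, so your direct unwinding of Definition~\ref{def:1} for both $\Res$ and $\Ind$ is precisely what is intended. The adjunction remark is a nice bonus but unnecessary here.
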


In the next result we show that using restriction and induction of
filtrations we can always reduce to the case when $C$ is strict and
therefore the associated preorder is an order.

\begin{prop}\label{prop:6}
  Let $(A,C)$ be a preordered abelian group, and $Z$ the subgroup of
  quasi-zero elements of $C$.  Write $A'=A/Z$, $C'=C/Z$ and
  $\bq\colon A\to A'$ the quotient map. Let $E$ be a finite
  dimensional vector space.
  \begin{enumerate}
  \item \label{item:23} Let $F$ be a multifiltration on $E$ indexed by
  $(A,C)$, then
  \begin{displaymath}
    F=\Res^{\bq}(\Ind_{\bq }(F)).
  \end{displaymath}
\item \label{item:24} Let $F$ be a multifiltration on $E$ indexed by
  $(A',C')$, then
  \begin{displaymath}
    F=\Ind_{\bq }(\Res^{\bq}(F)).
  \end{displaymath}
  \end{enumerate}
\end{prop}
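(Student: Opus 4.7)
The plan is a direct unwinding of the four definitions involved, hinging on one simple observation: since $Z \subseteq C$, the preimage $\bq^{-1}(C')$ equals $C$, so $\bq(\lambda) \le_{C'} \bq(\mu)$ if and only if $\lambda \le_{C} \mu$. This lets me translate the induction sum (which lives over $A'$) into a sum over preimages in $A$.

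For part \eqref{item:23}, I would fix $\lambda \in A$ and simply chase the definitions. By Definition~\ref{def:1},
\begin{displaymath}
\Res^{\bq}(\Ind_{\bq}(F))^{\lambda} E \;=\; \Ind_{\bq}(F)^{\bq(\lambda)} E \;=\; \sum_{\mu \,:\, \bq(\lambda) \le_{C'} \bq(\mu)} F^{\mu} E.
\end{displaymath}
By the key observation above, the index set is exactly $\{\mu \in A : \lambda \le_{C} \mu\}$. Since $F$ is decreasing, each summand is contained in $F^{\lambda} E$, so the sum is contained in $F^{\lambda} E$; and since $\lambda \le_{C} \lambda$ forces $F^{\lambda} E$ itself to appear as a summand, the reverse inclusion is automatic. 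This gives equality.

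For part \eqref{item:24}, fix $\lambda' \in A'$ and again unwind:
\begin{displaymath}
\Ind_{\bq}(\Res^{\bq}(F))^{\lambda'} E \;=\; \sum_{\mu \,:\, \lambda' \le_{C'} \bq(\mu)} \Res^{\bq}(F)^{\mu} E \;=\; \sum_{\mu \,:\, \lambda' \le_{C'} \bq(\mu)} F^{\bq(\mu)} E.
\end{displaymath}
Each term $F^{\bq(\mu)} E$ with $\lambda' \le_{C'} \bq(\mu)$ is contained in $F^{\lambda'} E$ by the decreasing property of $F$, so the sum sits inside $F^{\lambda'} E$. For the other inclusion, I use that $\bq$ is surjective: choose any $\mu \in A$ with $\bq(\mu) = \lambda'$; then $\lambda' \le_{C'} \bq(\mu)$ trivially, so $F^{\lambda'} E = F^{\bq(\mu)} E$ appears as one of the summands.

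There is no real obstacle here; the statement is a formal consequence of the definitions together with the identity $\bq^{-1}(C')=C$. The only point worth flagging is the verification of that identity, which amounts to noting that $C + Z = C$ because $Z \subseteq C$, so $a+Z \in C/Z$ forces $a \in C$.
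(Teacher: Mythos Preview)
Your proof is correct and follows essentially the same approach as the paper's: both hinge on the observation that $\bq(\lambda)\le_{C'}\bq(\mu)$ iff $\lambda\le_{C}\mu$ (which the paper derives from $\lambda+q\le\lambda\le\lambda+q$ for $q\in Z$, and you derive equivalently from $\bq^{-1}(C')=C$), and then unwind the definitions exactly as you do. Your write-up is in fact slightly more explicit about why $\sum_{\mu\ge\lambda}F^{\mu}E=F^{\lambda}E$ and about using surjectivity of $\bq$ in part~\eqref{item:24}, but the argument is the same.
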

\begin{proof}
  We start proving \eqref{item:23}. For all
  $\lambda \in A$ and $q\in Z$ we have that  $\lambda +q\le \lambda $
  and $\lambda \le \lambda +q$. Therefore for all $\lambda ,\mu \in
  A$, $\lambda \le \mu $ if and only if $\bq(\lambda )\le \bq(\mu
  )$. Let $F$ be a multifiltration indexed by $(A,C)$ then 
  $F^{\lambda }E=F^{\lambda
    +q}E$. Thus
  \begin{multline*}
    \Res^{\bq}(\Ind_{\bq }(F))^{\lambda }E
    =\Ind_{\bq }(F)^{\bq(\lambda) }E\\
    =\sum_{\mu \colon \bq(\lambda )\le\bq(\mu)}F^{\mu }E
    =\sum_{\mu \colon \lambda \le\mu}F^{\mu }E=F^{\lambda }E.
  \end{multline*}

  Conversely, let $F$ be a multifiltration indexed by $(A',C')$. Then
  \begin{multline*}
    \Ind_{\bq }(\Res^{\bq}(F))^{\lambda }E
    =\sum_{\mu \colon \lambda \le\bq(\mu)}\Res^{\bq}(F)^{\mu }E
    =\sum_{\mu \colon \lambda \le\bq(\mu)}F^{\bq(\mu) }E
    =F^{\lambda }E,
  \end{multline*}
  proving \eqref{item:24}. 
\end{proof}

We want to translate the classical notions of exhaustivity and
separability of filtrations  to multifiltrations. Exhaustivity is easy.
\begin{df} \label{def:2}
  A multifiltration $F$ on $E$ is \emph{exhaustive} if
  \begin{displaymath}
    E=\sum _{\lambda \in A}F^{\lambda }E.
  \end{displaymath}
\end{df}

\begin{prop}\label{prop:4}
  Let $\varphi\colon (A,C)\to  (A',C')$ be a morphism of preordered
f.~g.~abelian groups. 
\begin{enumerate}
\item \label{item:13} Let $(E,F)$ be a multifiltered vector space with
  index set $(A,C)$ and $\lambda \in A$. If $F$ is exhaustive, then
  $F[\lambda ]$ is  exhaustive. 
\item \label{item:11} Let $(E,F)$ be a multifiltered vector space with
  index set $(A,C)$. If $F$ is exhaustive, then $\Ind_{\varphi}(F)$ is
  exhaustive. 
\item \label{item:12} Let $(E,F)$ be a multifiltered vector
  space with index set $(A',C')$. If $F$ is 
  exhaustive and $\varphi$ is surjective, then $\Res^{\varphi}(F)$ is
  exhaustive. 
\end{enumerate}
\end{prop}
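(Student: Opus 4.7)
The plan is to unpack each definition in turn; each assertion will reduce immediately to showing that a specific sum of subspaces equals $E$, using the exhaustivity hypothesis on $F$. Since all three items only concern the set of subspaces appearing in the filtration (not the order structure), I expect no substantial obstacle; the only care needed is in item \eqref{item:12}, where surjectivity of $\varphi$ is genuinely used.

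For \eqref{item:13}, by Definition \ref{def:9} one has $F[\lambda]^{\mu}E = F^{\lambda+\mu}E$. The substitution $\nu = \lambda + \mu$ is a bijection on $A$, so
\[
\sum_{\mu\in A} F[\lambda]^{\mu}E = \sum_{\nu\in A} F^{\nu}E = E
\]
by exhaustivity of $F$.

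For \eqref{item:11}, the induced multifiltration is given by $\Ind_{\varphi}(F)^{\lambda}E = \sum_{\mu\,:\,\lambda \le_{C'}\varphi(\mu)} F^{\mu}E$. The idea is that, for any fixed $\mu\in A$, the element $\lambda_{0} := \varphi(\mu)\in A'$ trivially satisfies $\lambda_{0}\le_{C'}\varphi(\mu)$ since $0\in C'$. Hence $F^{\mu}E$ appears as a summand of $\Ind_{\varphi}(F)^{\varphi(\mu)}E$. Summing over $\mu\in A$ we obtain
\[
E = \sum_{\mu\in A} F^{\mu}E \subset \sum_{\lambda\in A'} \Ind_{\varphi}(F)^{\lambda}E \subset E,
\]
which gives the desired equality.

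For \eqref{item:12}, unwinding definitions gives $\Res^{\varphi}(F)^{\lambda}E = F^{\varphi(\lambda)}E$, so
\[
\sum_{\lambda\in A} \Res^{\varphi}(F)^{\lambda}E = \sum_{\lambda\in A} F^{\varphi(\lambda)}E = \sum_{\mu\in \varphi(A)} F^{\mu}E.
\]
Without surjectivity this might only equal $\sum_{\mu\in\varphi(A)}F^{\mu}E$, which could be a proper subspace of $E$ (indeed, if $\varphi$ is the zero map and $F$ is concentrated in positive degree, the sum collapses to $F^{0}E$). The hypothesis that $\varphi$ is surjective means $\varphi(A)=A'$, and then the right-hand side equals $\sum_{\mu\in A'} F^{\mu}E = E$ by exhaustivity of $F$. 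This is the only place where a nontrivial hypothesis enters, and identifying where surjectivity bites is the sole conceptual point of the argument.
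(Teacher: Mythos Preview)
Your proof is correct and follows essentially the same approach as the paper: each item is reduced to a direct computation showing that the relevant sum of subspaces is all of $E$, with surjectivity of $\varphi$ invoked exactly where you use it in \eqref{item:12}. The paper's argument for \eqref{item:11} computes the double sum directly rather than via your containment $F^{\mu}E\subset \Ind_{\varphi}(F)^{\varphi(\mu)}E$, but this is only a cosmetic difference.
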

\begin{proof}
  Statement \eqref{item:13} is clear.
  Assume that $F$ is an exhaustive multifiltration indexed by
  $(A,C)$. Then 
  \begin{displaymath}
    \sum_{\lambda \in A'}\Ind_{\varphi}(F)^{\lambda }E=
    \sum_{\lambda \in A'}\sum_{\mu \colon \varphi(\mu ) \ge \lambda }F^{\mu
    }E=
    \sum_{\mu \in A}F^{\mu }E=E.
  \end{displaymath}
  This proves \eqref{item:11}.

  Assume that $F$ is an exhaustive multifiltration indexed by
  $(A',C')$, and that $\varphi$ is surjective. Then 
  \begin{displaymath}
    \sum_{\lambda \in A}\Res^{\varphi}(F)^{\lambda }E=
    \sum_{\lambda \in A}F^{\varphi(\lambda )}E\overset{\ast}{=} 
    \sum_{\mu \in A'}F^{\mu }E=E,
  \end{displaymath}
  where the equality $\overset{\ast}{=}$ holds by the surjectivity of
  $\varphi$. This proves \eqref{item:12}.
\end{proof}

\begin{ex}
  Clearly, the surjectivity of $\varphi$ is needed in Proposition
  \ref{prop:4}~\eqref{item:12} as the following example shows. Let
  \begin{align*}
    A&=\Z,&C&=\set{m}{m\ge 0},\\
    A'&=\Z^{2},&C'&=\set{(m,n)}{m\ge 0,\, n\ge 0 },
  \end{align*}
  and $\varphi\colon A\to A'$ the map $m\mapsto (m,0)$. Let $E$ be a one
  dimensional vector space and $F$ the multifiltration indexed by
  $(A',C')$ defined as
  \begin{displaymath}
    F^{(m,n)}E=
    \begin{cases}
      E,&\text{ if }n\le 1, m\le 1,\\
      \setl{0},&\text{ otherwise}.
    \end{cases}
  \end{displaymath}
  This is an exhaustive multifiltration but
  \begin{displaymath}
    \Res^{\varphi}(F)^{m}E=\setl{0},\ \forall m\in \Z,
  \end{displaymath}
  hence is not exhaustive.
\end{ex}

Separability is a little more subtle. The naive approach would be to
impose separability on chains.
\begin{df} \label{def:3}
  A multifiltration $F$ is called \emph{chain-separated} if, for any
  infinite sequence
  \begin{displaymath}
    \lambda _{1}< \lambda _{2}<\dots < \lambda _{i}<\dots 
  \end{displaymath}
  there is an index $i_{0}$ such that, for all $i\ge i_{0}$,
  $F^{\lambda _{i}}E=0$. 
\end{df}

The notion of chain-separatedness does not change if we take the
quotient by the quasi-zero elements. The following result follows
immediately from the definitions. 

\begin{prop} \label{prop:5}
  Let $(A,C)$ be a preordered abelian group, $Z$ be the subgroup of
  quasi-zero elements of $C$.  Write $A'=A/Z$, $C'=C/Z$ and
  $\bq\colon A\to A'$ the quotient map. Let $E$ be a finite
  dimensional vector space and $F$ a multifiltration on $E$ indexed by
  $(A,C)$. Then $F$ is chain-separated if and only if $\Ind_{\bq}F$ is
  chain separated.
\end{prop}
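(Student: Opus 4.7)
The plan is to exploit two observations already implicit in the preceding material: first, that the multifiltration $F$ is constant on cosets of $Z$, so it is determined by its values on $A/Z$, and second, that strict chains in $A$ and in $A'=A/Z$ correspond to each other.

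First I would record the calculation, already done in the proof of Proposition~\ref{prop:6}, that for every $\lambda\in A$ and $z\in Z$ one has $\lambda\le\lambda+z$ and $\lambda+z\le\lambda$, hence $F^{\lambda+z}E=F^{\lambda}E$. Combined with the fact that the induced preorder on $A'$ satisfies $\bq(\lambda)\le\bq(\mu)\iff\lambda\le\mu$, this immediately yields
\begin{displaymath}
  \Ind_{\bq}(F)^{\bq(\lambda)}E
  =\sum_{\mu\colon \bq(\lambda)\le\bq(\mu)}F^{\mu}E
  =\sum_{\mu\colon \lambda\le\mu}F^{\mu}E
  =F^{\lambda}E,
\end{displaymath}
the last equality because $F$ is decreasing so the sum is attained at $\mu=\lambda$.

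Next I would verify the bijection between strict chains. If $\lambda<\mu$ in $A$, that is $\lambda\le\mu$ but $\mu\not\le\lambda$, then $\bq(\lambda)\le\bq(\mu)$ but $\bq(\mu)\not\le\bq(\lambda)$, so $\bq(\lambda)<\bq(\mu)$ in $A'$. Conversely, given an infinite strict chain $\mu_1<\mu_2<\cdots$ in $A'$, choose any lifts $\lambda_i\in A$ with $\bq(\lambda_i)=\mu_i$; the same biconditional gives $\lambda_i<\lambda_{i+1}$.

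Now the equivalence drops out. For the forward direction, given a chain $\mu_1<\mu_2<\cdots$ in $A'$, pick lifts $\lambda_i$; chain-separatedness of $F$ produces $i_0$ with $F^{\lambda_i}E=0$ for $i\ge i_0$, and by the displayed formula $\Ind_{\bq}(F)^{\mu_i}E=F^{\lambda_i}E=0$. For the converse, given $\lambda_1<\lambda_2<\cdots$ in $A$, the images form a strict chain in $A'$; chain-separatedness of $\Ind_{\bq}(F)$ gives an $i_0$ beyond which $\Ind_{\bq}(F)^{\bq(\lambda_i)}E=0$, and this is $F^{\lambda_i}E$. I do not anticipate any real obstacle; the only point that needs a moment's care is confirming that strict inequality is preserved and reflected by $\bq$, which uses both directions of the compatibility $\lambda\le\mu\iff\bq(\lambda)\le\bq(\mu)$.
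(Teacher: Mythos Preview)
Your proof is correct and is precisely the routine unpacking of the definitions that the paper has in mind; the paper itself offers no argument beyond the remark that the result ``follows immediately from the definitions,'' and your write-up simply spells this out, leaning on the same identity $\Ind_{\bq}(F)^{\bq(\lambda)}E=F^{\lambda}E$ established in Proposition~\ref{prop:6}.
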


The notion of chain-separatedness is not satisfactory because, in
general, it is not
stable under induced multifiltrations as the next example shows.
\begin{ex} Let $A=\Z^{2}$, $C=\set{(x,y)}{x,y\ge 0}$ and $E$ a one
  dimensional vector space. Then the multifiltration
  \begin{displaymath}
    F^{(x,y)}E=
    \begin{cases}
      E,&\text{ if }x+y\le 0,\\
       0,&\text{ if }x+y> 0.
    \end{cases}
  \end{displaymath}
 is chain-separated. But there is an infinite sequence of
 non-comparable points with $F^{\lambda }E$ different from zero. If we now
 consider the preordered group $(A',C')$ with $A'=\Z$ and $C'$ the cone of non-negative
 integers, and the map $\varphi\colon A\to A'$ being the first
 projection $(x,y)\mapsto x$. Then $\Ind_{\varphi}(F)$ is not
chain-separated.
\end{ex}

We need a stronger definition of separatedness, that we will give
after introducing some notation. 

\begin{notation} Let $(E,F)$ be a multifiltered vector space with
  index set $(A,C)$. 
  For a set $K \subset A$ we denote
  \begin{equation*}
    \widetilde{K} = \set{\lambda \in A}{\lambda \geq \mu \text{ for some } \mu
  \in K}
\end{equation*}
and
\begin{displaymath}
  F^{K}E=\sum_{\lambda \in K}F^{\lambda }E.
\end{displaymath}
Clearly $F^{K}E=F^{\widetilde K}E$.
\end{notation}

  \begin{df} \label{def:4}
  A multifiltration indexed by $(A,C)$ is called \emph{separated} if for all collections
  $K_{\alpha}$ of subsets of $A$ satisfying $\cap_{\alpha}
  \widetilde{K_{\alpha}} = \emptyset$, we have
  \begin{equation*}
    \bigcap F^{K_{\alpha}}E = \setl{0}.
  \end{equation*}
\end{df}

The following lemma implies that in many cases, separatedness
implies chain-separatedness.

\begin{lem}\label{lemm:2}
  Let $A$ be a f.~g.~abelian group,  $C$ a submonoid and $Z\subset C$ the
  subgroup of quasi-zero elements. Assume that  $C/Z$ is a strongly
  strict submonoid of  $A/Z$. Let $(E,
  F)$ be a multifiltration indexed by $(A,C)$.  If $F$ is separated, then it
  is chain-separated. 
\end{lem}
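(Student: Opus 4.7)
The plan is to argue by contradiction. Suppose $F$ is not chain-separated, so there exists an infinite strictly increasing chain $\lambda_1 < \lambda_2 < \cdots$ with $F^{\lambda_i}E \neq 0$ for all $i$. I will use separatedness to derive $\bigcap_i F^{\lambda_i}E = \{0\}$, contradicting the fact that $E$ is finite-dimensional (so the descending chain $F^{\lambda_1}E \supseteq F^{\lambda_2}E \supseteq \cdots$ of nonzero subspaces must stabilize to a nonzero subspace).

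The first step is to package the chain as sets suitable for applying Definition~\ref{def:4}. Set $K_i = \set{\lambda_j}{j \geq i}$. Since the filtration is decreasing and $\lambda_j \geq \lambda_i$ for $j\geq i$, we have $F^{\lambda_j}E \subseteq F^{\lambda_i}E$, so $F^{K_i}E = F^{\lambda_i}E$. Therefore $\bigcap_i F^{K_i}E = \bigcap_i F^{\lambda_i}E$, which is nonzero by the finite-dimensional stabilization argument above.

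The crux of the proof is to show that $\bigcap_i \widetilde{K_i} = \emptyset$, which by separatedness forces $\bigcap_i F^{K_i}E = \{0\}$ and yields the contradiction. Here is where the strongly strict hypothesis is essential. Apply Lemma~\ref{lemm:5} and Corollary~\ref{cor:1} to the preordered group $(A/Z, C/Z)$ (which is meaningful since $C/Z$ is strongly strict by assumption) to obtain a linear form $\omega\colon (A/Z)\otimes \R \to \R$ with $\omega(\iota_\R(\bq(x))) \geq 0$ for all $x\in C$, and moreover a constant $K>0$ with $\omega(\iota_\R(\bq(x))) \geq K$ for all $x \in C\setminus Z$. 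Since $\lambda_{i+1} - \lambda_i \in C$ and, by the definition of $<$, we have $\lambda_i - \lambda_{i+1} \notin C$, the difference $\lambda_{i+1}-\lambda_i$ lies in $C \setminus Z$. Thus $\omega(\iota_\R(\bq(\lambda_{i+1}))) - \omega(\iota_\R(\bq(\lambda_i))) \geq K$, so the sequence $\omega(\iota_\R(\bq(\lambda_i)))$ tends to $+\infty$. Any putative $\mu \in \bigcap_i \widetilde{K_i}$ would satisfy $\mu - \lambda_{j_i} \in C$ for a sequence $j_i \to \infty$, giving $\omega(\iota_\R(\bq(\mu))) \geq \omega(\iota_\R(\bq(\lambda_{j_i}))) \to \infty$, which is impossible for a fixed element.

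The main obstacle is bridging the combinatorial notion of chain-separatedness with the order-theoretic notion of separatedness via an appropriate choice of test sets $K_i$; once the sets $K_i = \set{\lambda_j}{j\geq i}$ are chosen, the rest is a straightforward application of the separation lemma for strongly strict cones. The hypothesis that $C/Z$ is strongly strict (rather than just strict) is precisely what guarantees a linear functional $\omega$ that grows uniformly along the chain, preventing an upper bound $\mu$ from existing.
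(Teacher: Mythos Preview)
Your proof is correct and follows essentially the same approach as the paper: both arguments use the linear form $\omega$ from Lemma~\ref{lemm:5} and the uniform lower bound of Corollary~\ref{cor:1} to show that $\omega(\iota_\R(\bq(\lambda_i)))\to\infty$, hence $\bigcap_i\widetilde{K_i}=\emptyset$, and then invoke separatedness. The only cosmetic differences are that the paper takes the singletons $K_i=\{\lambda_i\}$ (equivalent to your $K_i=\{\lambda_j:j\ge i\}$ since $\widetilde{\{\lambda_i\}}=\widetilde{K_i}$ and $F^{K_i}E=F^{\lambda_i}E$) and leaves the finite-dimensional stabilization step implicit, whereas you spell it out.
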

\begin{proof}
  Let $\lambda _{1}<\lambda _{2}<\dots$ be an infinite sequence of strict
  inequalities. We shall show that
  \begin{equation}
    \bigcap _{i} \{\lambda _{i} \}^{\sim} = \emptyset.
	\label{eqn:evennoth}
  \end{equation}
  Write $A' = A /Z$, and $C' = C / Z$. Recall that $\bq : A \to A'$
  denotes the quotient
  map. If $\lambda_1 < \lambda_2 < \dotsb$ in the order induced by
  $C$, then 
  $\bq(\lambda_1) < \bq(\lambda_2) < \dotsb$ in the order induced by $C'$.
  Thus it is enough to show that
  \begin{displaymath}
 \bigcap_{i\ge 1} \set{a' \in
  A'}{a'\,\geq_{C'}\,\bq(\lambda_i)} = \emptyset.
  \end{displaymath}

Since $C'$ is a strongly strict submonoid, there is a closed cone 
  $D\subset A' \otimes \R$ with $D\cap (-D)=\setl{0}$ and $C'\subset
  \iota'_{\R}{}^{-1}(D)$.

  Let $\omega $ be the linear form provided by Lemma
  \ref{lemm:5}. Since $\bq(\lambda_1) < \bq(\lambda_2) < \dotsb$, by
  Corollary \ref{cor:1}, the sequence
  \begin{displaymath}
    \omega (\iota_{\R}(\bq(\lambda_1))) <
    \omega (\iota_{\R}(\bq(\lambda_2))) <\dotsb
  \end{displaymath}
  is not bounded above.

  Since any element in $y\in \bigcap_i
  \setl{\bq(\lambda_i)}^{\sim}$ should satisfy
  \begin{displaymath}
      \omega (\iota_{\R}(y))\ge   \omega (\iota_{\R}(\bq(\lambda_i)))
  \end{displaymath}
  for all $i$, we deduce $\bigcap_i
  \setl{\lambda_i}^{\sim} = \emptyset$, proving \eqref{eqn:evennoth}.

  From equation \eqref{eqn:evennoth}, since $F$ separated, we deduce
  $\bigcap_{i}F^{\lambda _{i}}E=\setl{0}$. 
\end{proof}

\begin{ex}
  We give an example of a multifiltration which is separated but not
  chain-separated. Consider the cone $C$ in Example \ref{exm:1} (see also
  Example \ref{exm:ststrict}). Let $E$ be a one dimensional vector space
  over a base field $k$ and let
  \[
	F^{(m, n)} E = 
	\begin{cases}
	  E & \text{if } (m, n) \notin C \\
	  0 & \text{if } (m, n) \in C
	\end{cases}
  \]
  We shall first show that $F$ is separated. 
  Note that the cone $C$ satisfies $A \setminus (-C) \subset C$. Thus, for
  a collection of subsets $K_{\alpha} \subset A$,
  \begin{align*}
	\bigcap_{\alpha} \widetilde{K_{\alpha}} = \emptyset
	&\implies \forall a \in A, \exists \alpha, a \notin \widetilde{K_{\alpha}}
	\implies \exists \alpha, 0 \notin \widetilde{K_{\alpha}}. \\
	&\implies \exists \alpha, \forall \lambda \in K_{\alpha}, 0 \not\geq \lambda
	\implies \exists \alpha, \forall \lambda \in K_{\alpha}, \lambda \notin
	(-C) \\
	&\implies \exists \alpha, \forall \lambda \in K_{\alpha}, \lambda \in C
	\implies \exists \alpha, F^{K_{\alpha}} E = \setl{0} \\
	&\implies \bigcap_{\alpha} F^{K_{\alpha}} E = \setl{0}.
  \end{align*}
  
  However, the multifiltration is not chain-separated, as for $\lambda_n =
  (-n, 0)$ for $n = 1, 2, \dotsc$, $\lambda_1 < \lambda_2 < \dotsb$.
  However, note that $F^{\lambda_n} E = E$ for all $n \geq 1$, and hence
  $\bigcap_{n} F^{\lambda_n} E \neq \setl{0}$.
\end{ex}

We next derive some properties of the notion of separatedness.

\begin{prop}\label{prop:1}
Let $\varphi\colon (A,C)\to  (A',C')$ be a morphism of preordered
f.~g.~abelian groups.
\begin{enumerate}
\item \label{item:14} Let $(E,F)$ be a multifiltered vector space with
  index set $(A,C)$ and $\lambda \in A$. If $F$ is 
separated, then $F[\lambda ]$ is separated.
\item \label{item:4} Let $(E,F)$ be a multifiltered vector space with
  index set $(A,C)$. If $F$ is 
separated, then $\Ind_{\varphi}(F)$ is separated.
\item \label{item:5} Let $(E,F)$ be a multifiltered vector
  space with index set $(A',C')$. If $F$ is 
  separated and one of the following two sets of conditions is
  satisfied:
  \begin{enumerate}
  \item $\varphi$ is surjective and $C=\varphi^{-1}(C')$ or
  \item \label{item:6} $C'$ is a polyhedral cone,  $C=\varphi^{-1}(C')$ and
    $L(C)=\varphi^{-1}(L(C'))$,
  \end{enumerate}
   then $\Res^{\varphi}(F)$ is separated.
\end{enumerate}
\end{prop}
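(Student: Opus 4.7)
My plan is to treat the three parts in turn, in each case reducing the claim to the separatedness hypothesis on $F$ via a cleverly chosen family of upward-closed subsets of the appropriate index group; throughout I use that $F^{K}E$ depends only on the upward closure $\widetilde{K}$. Part \ref{item:14} is immediate: since $F[\lambda]^{K}E = F^{K+\lambda}E$ and translation by $\lambda$ is an order isomorphism, $\widetilde{K_{\alpha}+\lambda} = \widetilde{K_{\alpha}}+\lambda$, so $\bigcap_{\alpha}\widetilde{K_{\alpha}}=\emptyset$ forces $\bigcap_{\alpha}\widetilde{K_{\alpha}+\lambda}=\emptyset$, and the separatedness of $F$ concludes. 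For part \ref{item:4}, a direct calculation yields
\begin{equation*}
  \Ind_{\varphi}(F)^{K'}E = \sum_{\lambda' \in K'}\sum_{\mu\colon \varphi(\mu)\ge\lambda'}F^{\mu}E = F^{\varphi^{-1}(\widetilde{K'})}E,
\end{equation*}
because $\mu$ contributes precisely when $\varphi(\mu)\in\widetilde{K'}$. The set $\varphi^{-1}(\widetilde{K'_{\alpha}})$ is upward closed in $A$ and $\varphi^{-1}$ commutes with intersections, so $\bigcap_{\alpha}\widetilde{K'_{\alpha}}=\emptyset$ forces $\bigcap_{\alpha}\varphi^{-1}(\widetilde{K'_{\alpha}})=\emptyset$, whereupon the separatedness of $F$ on $(A,C)$ finishes.

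The strategy for part \ref{item:5} in both subcases is to show that $\bigcap_{\alpha}\widetilde{K_{\alpha}}=\emptyset$ in $A$ implies $\bigcap_{\alpha}\widetilde{\varphi(K_{\alpha})}=\emptyset$ in $A'$; since $\Res^{\varphi}(F)^{K_{\alpha}}E = F^{\widetilde{\varphi(K_{\alpha})}}E$, separatedness of $F$ then concludes. For (a), assume by contradiction $\mu'\in\bigcap_{\alpha}\widetilde{\varphi(K_{\alpha})}$ and pick $\nu_{\alpha}\in K_{\alpha}$ with $\mu' - \varphi(\nu_{\alpha})\in C'$; the surjectivity of $\varphi$ lifts $\mu'$ to some $\mu\in A$, and $\varphi(\mu-\nu_{\alpha})\in C'$ gives $\mu-\nu_{\alpha}\in\varphi^{-1}(C')=C$, so $\mu\in\bigcap_{\alpha}\widetilde{K_{\alpha}}$, a contradiction.

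For (b), I factor $\varphi = \iota\circ\pi$ with $\pi\colon A\twoheadrightarrow A/\ker\varphi$ surjective and $\iota\colon A/\ker\varphi\hookrightarrow A'$ injective, endowing the middle group with $C_{1}=\pi(C)=\iota^{-1}(C')$; then $\pi$ satisfies the hypotheses of (a) and $\iota$ still satisfies those of (b), including $L(C_{1})=\iota^{-1}(L(C'))$, so by Proposition \ref{prop:7} it suffices to treat the case in which $\varphi$ is injective. Identifying $A$ with its image in $A'$, we have $C=A\cap C'$ and $L(C)=A\cap L(C')$. Suppose $\mu'\in\bigcap_{\alpha}\widetilde{\varphi(K_{\alpha})}$ with witnesses $\nu_{\alpha}\in K_{\alpha}$ satisfying $\mu'-\nu_{\alpha}\in C'$, and fix one index $\beta$. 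Then $\nu_{\alpha}-\nu_{\beta}\in C'-C'=L(C')$ for every $\alpha$, hence $\nu_{\alpha}-\nu_{\beta}\in A\cap L(C')=L(C)$. Writing $C' = \{\omega_{i}\ge 0,\ i=1,\dots,r\}$ using polyhedrality and setting $\psi_{i}=\omega_{i}|_{A}$, the uniform bound $\psi_{i}(\nu_{\alpha}-\nu_{\beta}) \le \omega_{i}(\mu')-\omega_{i}(\nu_{\beta})$ holds for every $\alpha$. Lemma \ref{lemm:7} applied to $(L(C),C)$, which is polyhedral and generating there, furnishes an ample $a\in C$; for each $i$ one has $\psi_{i}(a)>0$ whenever $\psi_{i}$ is not identically zero on $L(C)$, while in the contrary case $\psi_{i}(\nu_{\alpha}-\nu_{\beta})=0$ automatically. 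Choosing $n$ large enough (only finitely many $i$), we get $n\psi_{i}(a)\ge \sup_{\alpha}\psi_{i}(\nu_{\alpha}-\nu_{\beta})$ for every $i$, so $na-(\nu_{\alpha}-\nu_{\beta})\in C$ for every $\alpha$; then $\mu := na+\nu_{\beta}\in A$ satisfies $\mu-\nu_{\alpha}\in C$ for every $\alpha$, placing $\mu\in\bigcap_{\alpha}\widetilde{K_{\alpha}}$ and contradicting the hypothesis.

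The main obstacle is the final step of (b): in the absence of surjectivity, $\mu'$ need not lift to $A$, and one must instead manufacture a substitute $\mu\in A$ by dominating the pivot $\nu_{\beta}$ by a multiple of an ample element of $C\subset L(C)$. The polyhedrality of $C'$ ensures that only finitely many inequalities $\psi_{i}\ge 0$ need be checked simultaneously, while the condition $L(C)=\varphi^{-1}(L(C'))$ is exactly what places the differences $\nu_{\alpha}-\nu_{\beta}$ inside $L(C)$, so that a single multiple of the ample element can dominate all of them at once.
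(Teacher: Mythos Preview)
Your argument is correct. Parts \ref{item:14}, \ref{item:4}, and \ref{item:5}(a) are essentially identical to the paper's proof, phrased in the same way.

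For part \ref{item:5}(b) you take a somewhat different route. The paper does not factor $\varphi$; it works directly with the polyhedral set $\varphi^{-1}(b-C')\cap(c_{\alpha_0}+L(C))$, invokes the structure theorem for rational polyhedra (polytope plus recession cone, citing Rockafellar) to see that this set is contained in some $a-C$, and concludes. You instead reduce to the injective case, write $C'$ by its facet inequalities $\omega_i\ge 0$, observe that the restrictions $\psi_i$ give a uniform upper bound on $\psi_i(\nu_\alpha-\nu_\beta)$, and then dominate using the ample element of Lemma~\ref{lemm:7}. Both arguments encode the same geometric fact---that the witnesses $\nu_\alpha$, once pinned to the coset $\nu_\beta+L(C)$, lie in a translate of $-C$---but your version stays internal to the paper's toolkit (only Lemma~\ref{lemm:7}) rather than appealing to external convex geometry, at the mild cost of the extra factorization step. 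The factorization is convenient but not essential: your facet-inequality argument goes through verbatim for general $\varphi$ with $\psi_i=\omega_i\circ\varphi$.
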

\begin{proof}
  Statement \ref{item:14} is obvious. 
  We prove \eqref{item:4}.
  Let $K_{\alpha }'$ be a family of subsets of $A'$ such that
  $\bigcap_{\alpha }
  \widetilde {K}'_{\alpha }=\emptyset$. Write
  \begin{displaymath}
   K_{\alpha }=\set{\lambda }{\varphi(\lambda )\in
     \widetilde{K}'_{\alpha }}. 
  \end{displaymath}
  Then $\widetilde K_{\alpha }=K_{\alpha }$. The condition $\bigcap_{\alpha }
  \widetilde {K}'_{\alpha }=\emptyset$ implies the condition $\bigcap_{\alpha }
  \widetilde {K}_{\alpha }=\emptyset$. Moreover
  \begin{displaymath}
    \Ind_{\varphi}(F)^{\widetilde {K}'_{\alpha }}E=\sum_{\lambda' \in
      \widetilde {K}'_{\alpha }} \Ind_{\varphi}(F)^{\lambda' }E=\sum_{\varphi(\lambda )\in
      \widetilde {K}'_{\alpha }} F^{\lambda }E=\sum_{\lambda \in
      \widetilde {K}_{\alpha }} F^{\lambda }E=F^{\widetilde {K}_{\alpha }}E.
  \end{displaymath}
  Since the original multifiltration was separated,
  \begin{displaymath}
    \bigcap \Ind_{\varphi}(F)^{\widetilde {K}'_{\alpha }}E=\bigcap
    F^{\widetilde {K}_{\alpha }}E=0.  
  \end{displaymath}

  We next prove \eqref{item:5} in the case when $\varphi$ is
  surjective and $C=\varphi^{-1}(C')$. 
  Let $\setl{K_{\alpha }}$ be a collection of subsets such that
  $\bigcap \widetilde{K}_{\alpha }=\emptyset$. Then $\bigcap
  \varphi(K_{\alpha })^{\sim}=\emptyset$. Assume that this is not the case
  and let $b\in \bigcap \varphi(K_{\alpha })^{\sim}$. Then, for each
  $\alpha $ there is a $c_{\alpha }\in K_{\alpha }$ with
  $b-\varphi(c_{\alpha })\in C'$. Since $\varphi$ is surjective,
  $b=\varphi(a)$. Since $C=\varphi^{-1}(C')$ we deduce $a-c_{\alpha
  }\in C$. Therefore $a\in \bigcap \widetilde{K}_{\alpha }$ which
  gives us a contradiction.

  Using that $\Res^{\varphi}(F)^{K_{\alpha }}E=F^{\varphi(K_{\alpha
    })}E$, and that $F$ is separated,  we compute
  \begin{displaymath}
    \bigcap_{\alpha } \Res^{\varphi}(F)^{K_{\alpha }}E=
	\bigcap_{\alpha } F^{\varphi(K_{\alpha })}E = \setl{0}.
   \end{displaymath}
   showing that $\Res^{\varphi}(F)$ is separated.

   Finally we treat the case when $C'$ is polyhedral,
   $C=\varphi^{-1}(C')$ and $L(C)=\varphi^{-1}(L(C')))$. As in the
   previous case the key is to show that, if $\setl{K_{\alpha }}$ is a
   collection of subsets such that 
  $\bigcap \widetilde{K}_{\alpha }=\emptyset$. Then $\bigcap
  \varphi(K_{\alpha })^{\sim}=\emptyset$. Assume that this is not
  the case and let $b\in \bigcap
  \varphi(K_{\alpha })^{\sim}$. This means that, for each $\alpha $,
  there is a $c_{\alpha
  }\in K_{\alpha }$ with $b-\varphi(c_{\alpha })\in C'$. Thus
  $c_{\alpha }\in \varphi^{-1}(b-C')$. Fix $\alpha _{0}$. For every
  other $\alpha $ we have that $\varphi(c_{\alpha }-c_{\alpha
    _{0}})\in (b-C')-(b-C')=L(C')$. Therefore
  \begin{displaymath}
    c_{\alpha }-c_{\alpha _{0}}\in \varphi^{-1}(L(C'))=L(C).
  \end{displaymath}
  Thus
  \begin{displaymath}
    c_{\alpha }\in \varphi^{-1}(b-C')\cap (c_{\alpha _{0}}+L(C)).
  \end{displaymath}
  Since $C'$ is polyhedral, the subset $\varphi^{-1}(b-C')\cap
  (c_{\alpha _{0}}+L(C))$ is a polyhedral subset in the sense that it
  is of the form $\iota ^{-1}(K)$ for a rational polyhedral subset
  $K\subset A\otimes \Q$. Moreover, $C=\iota ^{-1}(C_{\Q})$ for a
  polyhedral cone $C_{\Q}$ in $A\otimes \Q$. Being $K$ polyhedral, we
  know that $K=\Delta +\rec(K)$, where $\Delta$ is a polytope and
  $\rec(K)$ is the recession set of $K$ (see \cite[\S 8]{Rockafellar}
  for the definition and properties of the recession cone of a convex
  set) by  \cite[Corollary 8.3.4]{Rockafellar} and the fact that
  $\varphi^{-1}(C')=C$ we have that $\rec(K)=-C_{\Q}$. Since $\iota^{-1}(\Delta )$
  is a bounded subset of $c_{\alpha _{0}}+L(C)$, there exists $a\in A$
  such that $\iota^{-1}(\Delta )\subset a-C$. We conclude that $\iota
  ^{-1}(K)\subset a-C$. Therefore, for each $\alpha $, $a-c_{\alpha
  }\in C$ and $a\in \bigcap \widetilde{K}_{\alpha }$ contradicting the
  hypothesis. The rest of the proof in this case follows the same
  pattern as the previous case.
\end{proof}

\begin{ex}
  We give some examples showing that all the hypothesis in Proposition
  \ref{prop:1}~\eqref{item:5} are needed. Let $E$ be a vector space of
  dimension $1$.
  \begin{enumerate}
  \item Let $\varphi\colon \Z^{2}\to \Z$ be the map $(m,n)\mapsto
    m$. Consider the cones
    \begin{align*}
      C'&=\set{m\in \Z}{m\ge 0},\\
      C&=\set{(m,n)\in \Z^{2}}{m\ge 0,n\ge 0}.
    \end{align*}
    Then $C\not
    =\varphi^{-1}(C')$. Let $F$ be the multifiltration of $E$ indexed by
    $(\Z,C')$ given by $F^{m}E=E$ if and only if $m\le 
    0$. This is a separated multifiltration. But the multifiltration
    \begin{displaymath}
      \Res^{\varphi}(F)^{(m,n)}E=
      \begin{cases}
        E,&\text{ if }m\le 0,\\
        0,&\text{ if }m> 0.
      \end{cases}
    \end{displaymath}
    is not
    chain-separated  with respect to $C$ as shown by the infinite sequence
    \begin{displaymath}
      \dots < (0,i) < (0,i+1) < \dots
    \end{displaymath}
    Note that, with respect to $\varphi^{-1}(C')$ the strict
    inequalities in this sequence become $\le$.
  \item Let $\varphi\colon \Z \to \Z^{2}$ given by
    $\varphi(m)=(m,-m)$. Consider the cone
    \begin{displaymath}
            C'=\set{(m,n)\in \Z^{2}}{m\ge 0,n\ge 0},
    \end{displaymath}
    and write $C=\varphi^{-1}(C')=\setl{0}$. In this case $C'$ is
    polyhedral but $\varphi^{-1}(L(C'))\not = L(C)$.
    Let $F$ be the multifiltration of $E$
    indexed by $(\Z^{2},C')$ given by
    \begin{displaymath}
      F^{(m,n)}E=E \iff (m,n)\le (1,1).
    \end{displaymath}
    Then the multifiltration $\Res^{\varphi}(F)$ is given by
    \begin{displaymath}
      \Res^{\varphi}(F)^{m} E=E \iff m=-1,0,1.
    \end{displaymath}
    This multifiltration is not separable. Consider the sets
    $K_{0}=\setl{0}$ and $K_{1}=\setl{1}$. Then $\widetilde
    K_{0}=K_{0}$ and $\widetilde K_{1}=K_{1}$ because
    $C=\setl{0}$. Therefore $\widetilde
    K_{0}\cap \widetilde K_{1}=\emptyset$ but
    \begin{displaymath}
      \Res^{\varphi}(F)^{0} E \cap \Res^{\varphi}(F)^{1} E=E\not = \emptyset. 
    \end{displaymath}
  \item Let $\varphi\colon \Z^{2}\to \Z^{3}$ be the map $(m,n)\mapsto
    (m,n,0)$. Let $K_{1}\subset \R^{3}$ be the convex subset given by
    \begin{displaymath}
      (x,y,z)\in K_{1}\iff z=1, \ y\ge |x|-\log(1+|x|)+1
    \end{displaymath}
    and let $K\subset \R^{3}$ be closure of the cone generated by
    $K_{1}$. Write $C'=K\cap \Z^{3}$. This is a non-polyhedral
    cone. Write $C=\varphi^{-1}(C')$. Then
    \begin{displaymath}
      C=\rec(K_{1})\cap \Z^{2}=\set{(m,n)}{n\ge |m|}.
    \end{displaymath}
    In this case $\varphi^{-1}(L(C'))=L(C)$. So all hypothesis are
    satisfied except the fact that $C'$ is polyhedral.     Let $F$ be
    the multifiltration of $E$ 
    indexed by $(\Z^{3},C')$ given by
    \begin{displaymath}
      F^{(m,n,\ell)}E=E \iff (m,n,\ell)\le (0,0,1).
    \end{displaymath}
    This multifiltration is easily seen to be separable.
    The multifiltration $\Res^{\varphi}(F)$ is given by
    \begin{displaymath}
      \Res^{\varphi}(F)^{(m,n)} E=E \iff n\le -|m|+\log(1+|m|)-1.
    \end{displaymath}
    We claim that this multifiltration is not separable. Indeed, let
    $S\subset \Z^{2}$ be the set
    \begin{displaymath}
      S=\set{(m,n)\in \Z^{2}}{n\le  -|m|+\log(1+|m|)-1}.
    \end{displaymath}
    For every $(m,n)\in S$ write $K_{(m,n)}=\setl{(m,n)}$. Then we
    claim that
    \begin{displaymath}
      \bigcap_{(m,n)\in S}\widetilde K_{(m,n)}=\emptyset.
    \end{displaymath}
    Assume that this is not the case and let $(a,b)\in \bigcap\widetilde
    K_{(m,n)}$. This implies that, for all $(m,n)\in S$,
    $(a,b)-(m,n)\in C$. Choose $m<0$ such that $a-m>0$ and that
    \begin{displaymath}
      \log(1-m)>b-a+2.
    \end{displaymath}
    write $n=b-a+m+1$.
    Then
    \begin{displaymath}
      n<m+\log(1-m)-1=-|m|+\log(1+|m|)-1
    \end{displaymath}
    so $(m,n)\in S$. But
    \begin{displaymath}
      b-n=a-m-1<a-m=|a-m|.
    \end{displaymath}
    therefore $(a,b)-(m,n)\not \in C$ contradicting the
    assumption. Since
    \begin{displaymath}
      \bigcap_{(m,n)\in S} \Res^{\varphi}(F)^{(m,n)} E=E\not = \setl{0}
    \end{displaymath}
    we conclude that $\Res^{\varphi}(F)$ is not separable.
  \end{enumerate}
\end{ex}

The next property of multifiltrations that we discuss is
\emph{regularity}. This property has no analogue for
filtrations. Roughly speaking, regularity tells us whether the information described by
a multifiltration appears once or it appears in several places independently.

\begin{df}\label{def:5}
  Let $(A, C)$ be a preordered abelian group and $(E, F)$ a
  multifiltration indexed by $(A, C)$. The multifiltration $F$ is called
  \emph{regular} if for every pair of subsets $K_{1}, K_{2}\subset A$,
  the equality
  \begin{displaymath}
    \sum_{\lambda  \in K_1} F^{\lambda} E \cap \sum_{\mu \in K_2}F^{\mu} E =
    \sum_{\mu' \in \widetilde{K}_1 \cap \widetilde{K}_2} F^{\mu'} E
  \end{displaymath}
  holds.  
\end{df}

\begin{prop}\label{prop:3}
  Let $\varphi\colon (A,C)\to  (A',C')$ be a morphism of preordered
  f.~g.~abelian groups.
  \begin{enumerate}
\item \label{item:15} Let $(E,F)$ be a multifiltered vector space with
  index set $(A,C)$ and $\lambda \in A$. If $F$ is regular, then
  $F[\lambda ]$ is regular.
  \item \label{item:7}   Let $(E,F)$ be a multifiltered vector space with
  index set $(A,C)$. If $F$ is regular, then $\Ind_{\varphi}(F)$ is
  regular.
\item \label{item:8} Assume that $\varphi$ is surjective and that
  $C=\varphi^{-1}(C')$. Let $(E,F)$ be a multifiltered vector
  space with index set $(A',C')$. If $F$ is 
  regular, then $\Res^{\varphi}(F)$ is regular. 
  \end{enumerate}
\end{prop}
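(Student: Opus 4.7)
My plan is to reduce each of the three parts to the regularity of $F$ itself, by first rewriting any sum of the form $\sum_{\lambda \in L} G^\lambda E$ (where $G$ denotes the shifted, induced, or restricted filtration) as a single $F^T E$ for a suitable subset $T$, and then analyzing how the operation $K\mapsto \widetilde{K}$ transforms under the relevant change of index set.

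For \eqref{item:15}, the order on $A$ is unchanged under shifting, so translation by $\lambda$ is order-preserving and $\widetilde{\lambda+K}=\lambda+\widetilde{K}$ for any $K\subset A$. Since $\sum_{\mu\in K}F[\lambda]^{\mu}E = F^{\lambda+K}E$, regularity of $F$ applied to the subsets $\lambda+K_1$ and $\lambda+K_2$ gives regularity of $F[\lambda]$ at once.

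For \eqref{item:7}, I would observe that for any $L\subset A'$,
\[
\sum_{\nu\in L}\Ind_{\varphi}(F)^{\nu}E \;=\; \sum_{\nu\in L}\sum_{\mu:\ \varphi(\mu)\ge \nu}F^{\mu}E \;=\; F^{\varphi^{-1}(\widetilde{L})}E,
\]
and that $\varphi^{-1}(\widetilde{L})$ is upward-closed in $(A,C)$ because $\varphi$ is order-preserving. Since $\varphi^{-1}(\widetilde{L_1})\cap \varphi^{-1}(\widetilde{L_2})=\varphi^{-1}(\widetilde{L_1}\cap \widetilde{L_2})$, regularity of $F$ gives
\[
\sum_{\nu\in L_1}\Ind_{\varphi}(F)^{\nu}E\,\cap\,\sum_{\nu\in L_2}\Ind_{\varphi}(F)^{\nu}E \;=\; F^{\varphi^{-1}(\widetilde{L_1}\cap \widetilde{L_2})}E,
\]
and the right-hand side coincides with $\sum_{\nu\in \widetilde{L_1}\cap \widetilde{L_2}}\Ind_{\varphi}(F)^{\nu}E$ by the same rewriting applied to the upward-closed set $\widetilde{L_1}\cap \widetilde{L_2}$.

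For \eqref{item:8}, one has $\sum_{\lambda\in K}\Res^{\varphi}(F)^{\lambda}E = F^{\varphi(K)}E$, so the crux becomes the set-theoretic identity
\[
\widetilde{\varphi(K_1)}\cap \widetilde{\varphi(K_2)} \;=\; \varphi\bigl(\widetilde{K_1}\cap \widetilde{K_2}\bigr).
\]
The inclusion $\supset$ is immediate from $\varphi$ being order-preserving. For $\subset$, given $\nu$ in the left-hand side, pick $\lambda_i\in K_i$ with $\nu\ge_{C'}\varphi(\lambda_i)$; by surjectivity write $\nu=\varphi(\mu)$, and then $\varphi(\mu-\lambda_i)\in C'$, whence $\mu-\lambda_i\in \varphi^{-1}(C')=C$, so $\mu\ge_C \lambda_i$ for $i=1,2$ and $\nu=\varphi(\mu)\in \varphi(\widetilde{K_1}\cap \widetilde{K_2})$. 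Once the identity is in hand, applying regularity of $F$ to $\varphi(K_1)$ and $\varphi(K_2)$ and pushing through the substitution $\nu=\varphi(\mu)$ yields regularity of $\Res^{\varphi}(F)$. I expect this lifting step to be the only non-formal point in the proof: both hypotheses of \eqref{item:8} are used precisely there, surjectivity to produce the preimage $\mu$ and $C=\varphi^{-1}(C')$ to transport the inequality $\nu\ge_{C'}\varphi(\lambda_i)$ back to $\mu\ge_C \lambda_i$.
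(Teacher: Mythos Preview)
Your proof is correct and follows essentially the same strategy as the paper: for \eqref{item:7} you pull back to the upward-closed sets $\varphi^{-1}(\widetilde{L_i})$ exactly as the paper does, and for \eqref{item:8} your set-theoretic identity $\widetilde{\varphi(K_1)}\cap\widetilde{\varphi(K_2)}=\varphi(\widetilde{K_1}\cap\widetilde{K_2})$ is precisely the paper's key equation (their version carries an extra $\sim$ on the right-hand image, but their argument in fact establishes your sharper form). Your observation that surjectivity and $C=\varphi^{-1}(C')$ are used exactly at the lifting step matches the paper's emphasis.
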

\begin{proof} Assertion \eqref{item:15} is clear.
  We start proving \eqref{item:7}.
  Let $K'_{1},K'_{2}\subset A'$. Write $K_{i}=\varphi^{-1}(\widetilde
  {K'_{i}})$, $i=1,2$. They satisfy $\widetilde K_{i}=K_{i}$. Then
  \begin{align*}
     \sum_{\lambda  \in K'_1} \Ind_{\varphi}(F)^{\lambda} E \cap
    \sum_{\mu \in K'_2}\Ind_{\varphi}(F)^{\mu} E
    & =
       \sum_{\lambda\colon \varphi(\lambda )  \in \widetilde {K'_1}}
      F^{\lambda} E
      \cap \sum_{\mu\colon \varphi(\mu ) \in \widetilde {K'_2}}F^{\mu} E\\ 
    &=
       \sum_{\lambda  \in K_1} F^{\lambda} E \cap \sum_{\mu \in
      K_2}F^{\mu} E\\
    &\overset{\ast}{=}
      \sum_{\mu' \in \widetilde{K}_1 \cap \widetilde{K}_2} F^{\mu'}
      E\\
    & =\sum_{\mu'\colon \varphi(\mu ') \in \widetilde{K'_1} \cap
      \widetilde{K'_2}} F^{\mu'} E\\
    &= \sum_{\mu' \in \widetilde{K'_1} \cap \widetilde{K'_2}} \Ind_{\varphi}(F)^{\mu'} E,
  \end{align*}
  where the equality $\overset{\ast}{=}$ follows from the regularity
  of $F$.

  We next prove \eqref{item:8}. The key point is to prove that, when
  $\varphi$ is surjective and  $C=\varphi^{-1}(C')$, then for any pair
  of sets $K_{1},K_{2}\subset A$, the equality
  \begin{equation}\label{eq:2}
    \varphi(\widetilde K_{1}\cap \widetilde
    K_{2})^{\sim}=\varphi(K_{1})^{\sim}\cap\varphi(K_{2})^{\sim} 
  \end{equation}
 holds. The inclusion $\subseteq$ is clear: If $x\in
 \varphi(\widetilde K_{1}\cap \widetilde K_{2})^{\sim}$ means that
 $x\ge \varphi(y)$ for some $y\in\widetilde K_{1}\cap \widetilde K_{2}
 $. This implies that there are $\lambda \in K_{1}$, $\mu \in K_{2}$
 with $y\ge \lambda $ and $y\ge \mu $. Therefore $x\ge \varphi(\lambda
 )$ and $x\ge \varphi(\mu )$ which tells us that $x\in
 \varphi(K_{1})^{\sim}\cap\varphi(K_{2})^{\sim} $. For this inclusion
 we have not used any property of the function $\varphi$. 

 We see now the reverse inclusion where the properties of $\varphi$
 will be needed.  Assume that $x\in
 \varphi(K_{1})^{\sim}\cap\varphi(K_{2})^{\sim} $. This means that
 there are $\lambda \in K_{1}$, $\mu \in K_{2}$, $c'_{1},c'_{2}\in C'$
 such that
 \begin{displaymath}
   x=\varphi(\lambda )+c_{1},\quad x=\varphi(\mu )+c_{2}.
 \end{displaymath}
 Since $\varphi$ is surjective we can write $x=\varphi(y)$, Moreover,
 $C=\varphi^{-1}(C')$, in particular $C'=\varphi(C)$, so we can write
 $c'_{1}=\varphi(c_{1})$, $c'_{2}=\varphi(c_{2})$ for $c_{1},c_{2}\in
 C$. Then
 \begin{displaymath}
   \varphi(y)=\varphi(\lambda )+\varphi(c_{1}),\quad
   \varphi(y)=\varphi(\mu  )+\varphi(c_{2}).
 \end{displaymath}
The condition $C=\varphi^{-1}(C')$ together with $\varphi(y-\lambda
-c_{1})=0\in C'$ implies that $y\ge \lambda +c_{1}\ge \lambda
$. Similarly $y\ge \mu $. So $y\in \widetilde K_{1}\cap \widetilde
K_{2}$ and
\begin{displaymath}
  x\in \varphi( \widetilde K_{1}\cap \widetilde K_{2})\subset \varphi(
  \widetilde K_{1}\cap \widetilde K_{2})^{\sim},
\end{displaymath}
proving equation \eqref{eq:2}.

We now compute
  \begin{align*}
     \sum_{\lambda  \in K_1} \Res^{\varphi}(F)^{\lambda} E \cap
    \sum_{\mu \in K_2}\Res^{\varphi}(F)^{\mu} E
    & =
       \sum_{\lambda\in K_1}
	   F^{\varphi(\lambda)} E
      \cap \sum_{\mu\in K_2} F^{\varphi(\mu)} E\\ 
    &=
       \sum_{\lambda  \in \varphi(K_1)} F^{\lambda} E \cap \sum_{\mu \in
     \varphi (K_2)}F^{\mu} E\\
    &\overset{\ast}{=}
      \sum_{\mu' \in \varphi(K_1)^{\sim} \cap \varphi(K_2)^{\sim}} F^{\mu'}
      E\\
    &\overset{\ast\ast}{=}
      \sum_{\mu' \in \varphi(\widetilde K_1 \cap\widetilde K_2)^{\sim}} F^{\mu'}
      E\\
    & =
      \sum_{\mu' \in \varphi(\widetilde K_1 \cap\widetilde K_2)} F^{\mu'}
      E\\
    &=
     \sum_{\mu' \in \widetilde K_1 \cap\widetilde K_2} \Res^{\varphi}(F)^{\mu'},
  \end{align*}
where the equality $\overset{\ast}{=}$ follows from the regularity of
$F$ and the equality $\overset{**}{=}$ from equation \eqref{eq:2}.
\end{proof}

\begin{ex}\label{exm:2}
  The condition of being regular is not stable under
  restriction of multifiltrations.
  \begin{enumerate}
  \item \label{item:9} We first show that the condition of
    $\varphi$ being surjective is not enough. 
  Let $A=\Z^{2}$, $C=\set{(m, n)}{m\ge 0,
    n\ge 0}$, $A'=\Z$, $C'=\set{m}{m\ge 0}$ and let $\varphi\colon
  (A,C)\to (A',C')$ be the map $\varphi(m,n)=m+n$. Let $E$ be a one
  dimensional vector space and let $F$ be the multifiltration indexed by
  $(A',C')$ given by
  \begin{displaymath}
    F^{m}E=
    \begin{cases}
      E,&\text{ if }m\le 0\\
      \setl{0}, &\text{ if }m> 0.
    \end{cases}
  \end{displaymath}
  This is a regular multifiltration. The restricted multifiltration
  $\Res^{\varphi}(F)$ is the multifiltration
  \begin{displaymath}
    \Res^{\varphi}(F)^{(m,n)}=
    \begin{cases}
      E,&\text{ if }m+n\le 0\\
      \setl{0}, &\text{ if }m+n> 0.
    \end{cases}
  \end{displaymath}
      which is not regular. In fact, if $\lambda =(0,0)$ and
      $\mu =(1,-1)$, then
      \begin{displaymath}
        \Res^{\varphi}(F)^{\lambda} E \cap \Res^{\varphi}(F)^{\mu} E=E,
      \end{displaymath}
      but $\{\lambda \}^{\sim}\cap \{\mu \}^{\sim}=\{(1,0)\}^{\sim}$. Thus
      \begin{displaymath}
        \sum_{\mu '\in \{\lambda \}^{\sim}\cap \{\mu
          \}^{\sim}}\Res^{\varphi}(F)^{\mu'}E=\Res^{\varphi}(F)^{(1,0)}E=\setl{0}. 
      \end{displaymath}
    \item \label{item:10} We also give an example showing that, if
      $\varphi$ is not surjective, even if we ask the hypothesis of
      Proposition \ref{prop:1}~\eqref{item:6}, the multifiltration
      $\Res^{\varphi}(F)$ may fail to be regular. Let
      \begin{align*}
        \label{eq:3}
        A&=\Z^{2},&C=&\cone((1,1),(-1,1))\\
        A'&=\Z^{3},&C'=&\cone((1,1,1),(-1,1,1),(1,1,-1),(-1,1,-1)),
      \end{align*}
      and let $\varphi\colon A\to A'$ be the map
      $\varphi(n,m)=(n,m,0)$. Then $C' $ is polyhedral,
      $C=\varphi^{-1}(C')$ and $L(C)=\varphi(L(C'))$.

      Again let $E$ be a one dimensional vector space and consider the
      multifiltration indexed by $(A',C')$
      \begin{displaymath}
        F^{(\ell,m,n)}E=
        \begin{cases}
          E,&\text{ if }(\ell,m,n)\le (1,1,1),\\
          \setl{0},&\text{ otherwise}.
        \end{cases}
      \end{displaymath}
      This multifiltration is regular. By contrast the multifiltration
      $\Res_{\varphi}(F)$ is not regular.  Indeed consider the points
      $\lambda =(0,0)$ and $\mu =(2,0)$. Since
      \begin{align*}
        (1,1,1)-\varphi(0,0)&=(1,1,1)\in C',\text{ and }\\
        (1,1,1)-\varphi(2,0)&=(-1,1,1)\in C',
      \end{align*}
      we deduce that
      \begin{displaymath}
        \Res^{\varphi}(F)^{(0,0)}E\cap \Res^{\varphi}(F)^{(2,0)}E=E.
      \end{displaymath}
      On the other hand
      \begin{displaymath}
        \setl{(0,0)}^{\sim} \cap \setl{(2,0)}^{\sim}=\setl{(1,1)}^{\sim}
      \end{displaymath}
      Since $(1,1,1)-\varphi(1,1)=(0,0,1)\not \in C',$
      we deduce that
      \begin{displaymath}
        \sum_{\mu '\in \setl{(0,0)}^{\sim}\cap
          \setl{(2,0)}^{\sim}}\Res^{\varphi}(F)^{\mu'}E=F^{(1,1,0)}E=\setl{0}.          
      \end{displaymath}

  \end{enumerate}
\end{ex}

\begin{prop}
  Let $(A,  C)$ be a preordered f.~g.~abelian group. Let $E$ be a vector 
  space with a multifiltration $F$. Let $S\subset E$ be a vector
  subspace and $Q=E/S$ the quotient.
  \begin{enumerate}
  \item If $C$ is generating and $F$ is exhaustive, the
    multifiltrations induced in $S$ and $Q$ 
    are exhaustive.
  \item If $F$ is separated the
    multifiltration induced in $S$ is separated. If moreover $C$ is
    generating, the multifiltration induced in $Q$ is separated. 
  \item If $F$ is chain-separated, the
    multifiltrations induced in $S$ and $Q$ are 
    chain-separated.
  \end{enumerate}
  \label{lem:sepexhds}
\end{prop}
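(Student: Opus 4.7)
The plan is to handle the three statements in turn, reserving the bulk of the work for the quotient case of (2). For (3), chain-separatedness is inherited trivially: if $F^{\lambda_{i}}E=\{0\}$ for $i\ge i_{0}$, then $F^{\lambda_{i}}S=S\cap F^{\lambda_{i}}E=\{0\}$ and $F^{\lambda_{i}}Q=\pi(F^{\lambda_{i}}E)=\{0\}$ for $i\ge i_{0}$, where $\pi\colon E\to Q$ is the quotient map. For (1), the quotient case is immediate: applying $\pi$ to $E=\sum_{\lambda}F^{\lambda}E$ yields $Q=\sum_{\lambda}F^{\lambda}Q$, and no hypothesis on $C$ is needed here. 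For the subspace case in (1), I exploit that $C$ generating forces the preorder to be both upward and downward directed (since $L(C)=A$ is equivalent, via Lemma~\ref{lemm:1}, to the analogous statement for $-C$): given $s\in S$ written as $\sum_{i=1}^{n}x_{i}$ with $x_{i}\in F^{\lambda_{i}}E$, I choose a common lower bound $\mu\le\lambda_{i}$ for $i=1,\dots,n$, so that each $x_{i}\in F^{\mu}E$ and hence $s\in S\cap F^{\mu}E=F^{\mu}S$. For the subspace case of (2), the inclusion $F^{K_{\alpha}}S=\sum_{\lambda\in K_{\alpha}}(S\cap F^{\lambda}E)\subseteq S\cap F^{K_{\alpha}}E$ gives $\bigcap_{\alpha}F^{K_{\alpha}}S\subseteq S\cap\bigcap_{\alpha}F^{K_{\alpha}}E=\{0\}$.

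The main obstacle is the quotient case of (2), for which I plan to reduce by induction on $\dim S$ to the case $\dim S=1$: a one-dimensional $S_{0}\subseteq S$ gives a factorisation $E\to E/S_{0}\to (E/S_{0})/(S/S_{0})\simeq E/S$, so once the $\dim S=1$ case is established for arbitrary $E$, applying it to $E\to E/S_{0}$ and then the inductive hypothesis (with $\dim(S/S_{0})=\dim S-1$) to $E/S_{0}\to E/S$ closes the induction. For $\dim S=1$, write $S=\langle s\rangle$ and, replacing each $K_{\alpha}$ by $\widetilde{K_{\alpha}}$, assume the $K_{\alpha}$ are upward closed with $\bigcap_{\alpha}K_{\alpha}=\emptyset$. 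Take $y\in\bigcap_{\alpha}(F^{K_{\alpha}}E+S)$ (a lift of an element of $\bigcap_{\alpha}F^{K_{\alpha}}Q$); the goal is to show $y\in S$. Partition the index set into $I=\{\alpha:S\subseteq F^{K_{\alpha}}E\}$ and $II=\{\alpha:F^{K_{\alpha}}E\cap S=\{0\}\}$, which exhaust all indices since $\dim S=1$. For $\alpha\in I$ one deduces $y\in F^{K_{\alpha}}E$; for $\alpha\in II$, transversality yields a unique scalar $t_{\alpha}$ with $y+t_{\alpha}s\in F^{K_{\alpha}}E$. If $II=\emptyset$ then $y\in\bigcap_{\alpha}F^{K_{\alpha}}E=\{0\}$ by separatedness of $F$. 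Otherwise, finite-dimensionality of $E$ ensures that the $F^{K_{\alpha}}E$ take only finitely many values, so the set $T:=\{t_{\alpha}:\alpha\in II\}$ is finite; setting $II_{t}=\{\alpha\in II:t_{\alpha}=t\}$, one has $y+ts\in F^{K_{\alpha}}E$ for every $\alpha\in I\cup II_{t}$.

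The crux, which uses $C$ generating essentially, is to show $\bigcap_{\alpha\in I\cup II_{t}}K_{\alpha}=\emptyset$ for at least one $t\in T$. If not, pick $\mu_{t}\in\bigcap_{\alpha\in I\cup II_{t}}K_{\alpha}$ for each of the finitely many $t$. Since $C$ is generating the preorder is directed, so the finite set $\{\mu_{t}:t\in T\}$ has a common upper bound $\nu\in A$; upward-closedness of each $K_{\alpha}$ then upgrades $\mu_{t}\in K_{\alpha}$ to $\nu\in K_{\alpha}$, which is therefore true for every $\alpha\in I\cup II_{t}$ and every $t\in T$, i.e.\ for every $\alpha\in I\cup II$ (the entire index set), contradicting $\bigcap_{\alpha}K_{\alpha}=\emptyset$. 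With such a $t$ in hand, separatedness of $F$ applied to the sub-family $\{K_{\alpha}\}_{\alpha\in I\cup II_{t}}$ forces $y+ts\in\bigcap_{\alpha\in I\cup II_{t}}F^{K_{\alpha}}E=\{0\}$, so $y=-ts\in S$. The hardest part is precisely this interplay between finite-dimensionality (which discretises the $t_{\alpha}$), upward-closedness of the $K_{\alpha}$, and the directedness supplied by $C$ generating; without the generating hypothesis one cannot rule out the pathological case in which the $t$-values vary so wildly that no single $I\cup II_{t}$ exhausts the index set.
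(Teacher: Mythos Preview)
Your proof is correct. For parts (1), (3), and the subspace half of (2) you match the paper almost verbatim; your remark that the quotient case of (1) needs no hypothesis on $C$ is even a small sharpening, since the paper's ``as before'' implicitly invokes the generating hypothesis while your direct application of $\pi$ to the sum $E=\sum_{\lambda}F^{\lambda}E$ does not.

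The real divergence is in the quotient half of (2). The paper invokes Lemma~\ref{lemm:7} to obtain an element $a\in C$ with the property that every $x\in A$ satisfies $x\le na$ for some $n$; it then chooses for each $n$ an index $\alpha_{n}$ with $na\notin\widetilde{K_{\alpha_{n}}}$, forms the nested family $J_{m}=\bigcup_{n\ge m}K_{\alpha_{n}}$, checks $\bigcap_{m}\widetilde{J_{m}}=\emptyset$, and uses separatedness plus finite-dimensionality of $E$ to force $F^{J_{m_{0}}}E=0$ for some $m_{0}$, whence $\bigcap_{\alpha}F^{K_{\alpha}}Q\subseteq F^{J_{m_{0}}}Q=0$. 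Your induction on $\dim S$, reducing to a one-dimensional kernel and splitting the index set according to whether $S\subseteq F^{K_{\alpha}}E$, is more hands-on but uses only that $\le_{C}$ is directed (Lemma~\ref{lemm:1}). By contrast, Lemma~\ref{lemm:7} is stated for a generating \emph{cone}, a hypothesis not present in the proposition; so your argument matches the stated assumptions more faithfully, while the paper's route is shorter and avoids the case analysis once that extra saturation hypothesis is in force.
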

\begin{proof}
  Assume that $F$ is exhaustive and let $x\in S$. The multifiltration on
  $E$ being exhaustive, there is a finite subset $\lambda _{i}$,
  $i=1,\dots,r$,
  \begin{displaymath}
    x\in \sum_{i=1}^{r} F^{\lambda_{i} }E.
  \end{displaymath}
  Since $C$ is generating, there is a $\mu \in A$ such that $\mu \le
  \lambda _{i}$, $i=1,\dots,r$. Then $x\in F^{\mu }E\cap S$ showing
  that the multifiltration induced in $S$ is exhaustive. 
  Let now $y\in Q$ and choose $x\in E$ a representative. As before,
  there is a $\mu \in A$ such that $x\in F^{\mu }E$. Therefore $y\in
  F^{\mu }Q$.

  We next assume that the multifiltration on $E$ is separated and we see
  that the multifiltration induced in $S$ is 
  separated. Let $\setl{K_{\alpha }}$ be a family of subsets such that
  $\bigcap \widetilde K_{\alpha }=\emptyset$. Then
  \begin{displaymath}
    F^{K_{\alpha }}S=\sum_{\lambda \in K_{\alpha }}F^{\lambda
    }S \subset F^{K_{\alpha }}E.
  \end{displaymath}
  Therefore, using the separatedness of the multifiltration on $E$,
  \begin{displaymath}
    \bigcap_{\alpha }F^{K_{\alpha }}S\subset \bigcap_{\alpha }F^{K_{\alpha
	}}E=\setl{0}.
  \end{displaymath}

  We next prove that the multifiltration induced in $Q$ is
  separated using that $C$ is generating. Let $\setl{K_{\alpha }}$ be
  as before and let $a\in C$ be 
  the element provided by Lemma \ref{lemm:7}. Since $\bigcap
  \widetilde K_{\alpha }=\emptyset$, for every $n\ge 0$ there is an
  $\alpha _{n}$ such that $na\not \in \widetilde K_{\alpha_{n}
  }$. Write
  \begin{displaymath}
    J_{m}=\bigcup_{n\ge m}K_{\alpha _{n}}.
  \end{displaymath}
  Then
  \begin{equation}
    \label{eq:4}
    \widetilde J_{1}\supset \widetilde J_{2}\supset
  \dots .
  \end{equation}
  Moreover, $\bigcap \widetilde J_{m}=\emptyset$. Assume this
  is not the case. Then there exists an $x\in \bigcap \widetilde
  J_{m}$. Since there is an $n\ge 0$ with $x\le na$, we deduce that
  $na\in \bigcap \widetilde J_{m}$ which contradicts the construction
  of the $J_{m}$. By separatedness of $F$, we know that $\bigcap
  F^{J_{m}}E=0$, and by \eqref{eq:4}, we have
  $F^{J_{1}}E\supset F^{J_{2}}E\supset\dots .$
Therefore, since $E$ is finite dimensional, there is an $m_{0}$ with
$F^{J_{m_{0}}}E=0$.  Then
\begin{displaymath}
  \bigcap_{\alpha } F^{K_{\alpha }}Q\subset
  \bigcap_{n } F^{K_{\alpha _{n}}}Q\subset
  \bigcap_{m } F^{J_{m}}Q\subset
  F^{J_{m_{0}}}Q=0.
\end{displaymath}
So, the multifiltration induced in $Q$ is separated.

  Assume now that the multifiltration $F$ is chain-separated and let
  \begin{displaymath}
    \lambda _{1}<\lambda _{2}<\dots 
  \end{displaymath}
  an infinite sequence of strict inequalities. Since we are assuming that $E$
  is finite dimensional, there is an $i$ such that $F^{\lambda
    _{i}}E=0$. Therefore
  \begin{displaymath}
    F^{\lambda _{i}}S=F^{\lambda _{i}}E \cap
      S=0,\text{ and }
    F^{\lambda _{i}}Q=\frac{F^{\lambda_{i} }E}{F^{\lambda _{i}}E \cap
      S}=0,
  \end{displaymath}
  showing that the multifiltrations induced in $S$ and  $Q$ are chain-separated. 
\end{proof}

\begin{ex} \label{exm:4}
  Regularity is not preserved under taking subspaces and quotients.
  \begin{enumerate}
  \item \label{item:16} Let $A=\Z^{2}$ and $C=\set{(m,n)}{m\ge 0,n\ge
      0}$. Let $E $ be a two dimensional vector space and let
    $\setl{u,v}$ be a basis of $E$. Consider the multifiltration of $E$
    \begin{displaymath}
      F^{(n,m)}E=
      \begin{cases}
        E,&\text{ if }m\le 0,\ n\le 0,\\
        \langle u \rangle,& \text{ if }m\le 0,\ n=1,\\
        \langle v \rangle,& \text{ if }m =1,\ n\le 0,\\
        0,&\text{ otherwise.}
      \end{cases}
    \end{displaymath}
    This multifiltration is regular. Let $S=\langle u+v \rangle \subset E$
    be a one dimensional subspace and let $Q=E/S$ be the quotient. The
    multifiltration induced in the quotient is not regular because
    \begin{displaymath}
      F^{(1,0)}Q=F^{(0,1)}Q=Q,
    \end{displaymath}
    while $F^{(1,1)}Q=0$.
  \item \label{item:17} Let $A=\Z^{2}$ again, but let $C$ be the
    submonoid generated by $(0,1),(1,1),(2,1)$. Then $C$ is a
    generating cone. As before, let $E $ be a two dimensional vector
    space with basis $\setl{u,v}$. Consider the multifiltration of $E$
    \begin{displaymath}
      F^{(n,m)}E=
      \begin{cases}
        E,&\text{ if }m< 0,\ 2n \le m,\\
        \langle u \rangle,& \text{ if }m= 0,\ n\le 0,\\
        \langle v \rangle,& \text{ if }m < 0,\ 2n=m+1,\\
        0,&\text{ otherwise.}
      \end{cases}
    \end{displaymath}
    This is a regular multifiltration, as can be described as
    \begin{align*}
      u\in F^{(n,m)}E&\Longleftrightarrow (m,n)\le (0,0),\\
      v\in F^{(n,m)}E&\Longleftrightarrow (m,n)\le (-1,0).
    \end{align*}
    Nevertheless, the multifiltration induced in $S=\langle u+v\rangle$ is
    the multifiltration
    \begin{displaymath}
      F^{(n,m)}S=S \Longleftrightarrow  m<0,\ 2n\le m,
    \end{displaymath}
    which is not regular because
    \begin{displaymath}
      F^{(-2,-1)}S=F^{(-1,-1)}S=S,\text{ while }
      F^{(-1,0)}S=F^{(0,0)}S=0.
    \end{displaymath}
  \item \label{item:18} Let $A=\Z^{3}$ and $C=\set{(\ell,m,n)}{\ell
      \ge0, m\ge 0,n\ge
      0}$. Let $E $ be a 3-dimensional vector space and let
    $\setl{u,v,w}$ be a basis of $E$. Consider the multifiltration of
    $E$ defined by
    \begin{align*}
      u\in F^{(\ell,n,m)}E&\Longleftrightarrow (\ell,m,n)\le (-1,0,0),\\
      v\in F^{(\ell,n,m)}E&\Longleftrightarrow (\ell,m,n)\le
                            (0,-1,0),\\
      w\in F^{(\ell,n,m)}E&\Longleftrightarrow (\ell,m,n)\le (0,0,-1).
    \end{align*}
    This multifiltration is regular. Let $S\subset E$ be the two
    dimensional subspace generated by $\setl{u-v,v-w}$. The multifiltration
    induced in $S$ is not regular as can be seen easily considering
    $K_{1}=\setl{(-1,-1,0),(-1,0,-1)}$ and $K_{2}=\setl{(0,-1,-1)}$.
  \end{enumerate}
\end{ex}

\begin{rmk} \label{rem:2}
Observe that the examples \ref{exm:4}~\eqref{item:17} and
\ref{exm:4}~\eqref{item:18} are more
involved than Example \ref{exm:4}~\eqref{item:16}. 
Recall that, if $A$ is a torsion free f.~g.~abelian
  group, a generating cone $C\subset A$ is called unimodular if
  $C$ is generated by an integral basis of $A$    
The cones of  \ref{exm:4}~\eqref{item:16} and
\ref{exm:4}~\eqref{item:18} are unimodular,
while the one of \ref{exm:4}~\eqref{item:17} is not. On the other hand, in
Example \ref{exm:4}~\eqref{item:18}, the group $A$ has rank 3, while
the group of the other two examples has rank two.  As we will see in
Section \ref{sec:equiv-sheav-toric} the examples
\ref{exm:4}~\eqref{item:17} and \ref{exm:4}~\eqref{item:18} are
related to reflexive sheaves on toric varieties that are not locally
free. 
\end{rmk}

As in the case of filtrations, to any vector space with a
multifiltration we can associate a graded vector space.

\begin{df} Let $(A,C)$ be a preordered abelian group and $(E,F)$ a
  vector space with a multifiltration indexed by $(A,C)$. For $\lambda
  \in A$, we define
  the graded piece of degree $\lambda $ as
  \begin{displaymath}
    \gr_{F}^{\lambda }E=F^{\lambda }E\left/ \sum_{\mu >\lambda} F^{\mu}E\right. 
  \end{displaymath}
  The \emph{associated graded vector space} is
  \begin{displaymath}
    \gr_{F}^{\ast}E=\bigoplus_{\lambda \in A} \gr_{F}^{\lambda }E. 
  \end{displaymath}  
  A point $\lambda \in A$ is said to be a \emph{jump point} if
  $\gr_{F}^{\lambda }E\not = 0$.
\end{df}

In general, the graded vector space associated to a multifiltered
vector space $E$ can
have a different dimension from that of $E$. Even it may happen that
$E$ is finite dimensional and $\gr_{F}^{\ast}E$ is infinite dimensional.
To study the associated graded vector space we start by studying the
basic properties of the jump points.

\begin{lem} Let $(E,F)$ be a multifiltered vector space with index set
  $(A,C)$ and let $Q$ be a quotient of $E$. If
  $\lambda_0$ is a jump point of $(Q,F)$, then $\lambda_0$ is a
  jump point of $(E,F)$.
  \label{lem:jpptlift}
\end{lem}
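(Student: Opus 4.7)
The plan is to argue by contrapositive: I will show that if $\lambda_0$ is not a jump point of $(E,F)$, then it cannot be a jump point of $(Q,F)$. Concretely, suppose $\gr_F^{\lambda_0} E = 0$, which by definition means
\begin{equation*}
  F^{\lambda_0} E = \sum_{\mu > \lambda_0} F^{\mu} E.
\end{equation*}
The goal will then be to push this equality through the quotient map $E \to Q$ and deduce $\gr_F^{\lambda_0} Q = 0$.

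The key observation is that $F^{\mu} Q$ is, by Definition \ref{def:8}, the image of $F^{\mu} E$ under the quotient map $\pi \colon E \to Q = E/S$. Since images commute with sums of subspaces, we have
\begin{equation*}
  \pi\Bigl(\sum_{\mu > \lambda_0} F^{\mu} E\Bigr) = \sum_{\mu > \lambda_0} \pi(F^{\mu} E) = \sum_{\mu > \lambda_0} F^{\mu} Q.
\end{equation*}
Applying $\pi$ to the displayed equality in the previous paragraph therefore yields $F^{\lambda_0} Q = \sum_{\mu > \lambda_0} F^{\mu} Q$, which is exactly the statement that $\gr_F^{\lambda_0} Q = 0$. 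This contradicts the assumption that $\lambda_0$ is a jump point of $(Q,F)$, and so $\lambda_0$ must in fact be a jump point of $(E,F)$.

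I don't anticipate any real obstacle here; the argument is a one-line diagram chase once one recalls that the induced quotient multifiltration is defined as the image multifiltration, so that ``sums'' and ``passage to the quotient'' commute. Note in particular that nothing needs to be said about the subspace $S$ beyond the definition of the quotient multifiltration, and no hypothesis on $(A,C)$ (such as being generating, strict, or saturated) is required for the statement.
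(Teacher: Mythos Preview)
Your proof is correct and is essentially the contrapositive of the paper's own argument. The paper proceeds directly: it picks $f \in F^{\lambda_0}Q \setminus \sum_{\mu > \lambda_0} F^{\mu}Q$, lifts it to $e \in F^{\lambda_0}E$, and observes that $e \notin \sum_{\mu > \lambda_0} F^{\mu}E$ (since otherwise $f = \pi(e)$ would lie in $\sum_{\mu > \lambda_0} F^{\mu}Q$); your version simply pushes the equality $F^{\lambda_0}E = \sum_{\mu > \lambda_0} F^{\mu}E$ through $\pi$, which is the same computation read backwards.
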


\begin{proof}  Let $\lambda _{0}$ be a jump point of $(Q,F)$, so
  $\sum_{\mu > \lambda_0} F^{\mu}Q \subsetneqq F^{\lambda_0}Q$. Choose
  $f \in F^{\lambda_0}Q \setminus 
  \sum_{\mu > \lambda_0} F^{\mu}Q$. Let $e \in F^{\lambda_0}E$ be such that
  its image in $F^{\lambda_0}Q$ is $f$. Clearly, $e \notin \sum_{\mu >
  \lambda_0} F^{\mu }E$. Thus $\lambda_0$ is a jump point of the multifiltration on
  $E$.
\end{proof}

\begin{lem}
  Suppose $(A, C)$ is a preordered f.~g.~abelian group, $E$  a
  finite dimensional vector space, and $F$ a decreasing, exhaustive and
  chain-separated multifiltration. Let $\sJ \subset A$ be the set of
  jump points of $F$. Then,
  \begin{equation*}
    E = \sum_{j \in \sJ} F^{j}E.
  \end{equation*}
  \label{lem:jumpssum}
\end{lem}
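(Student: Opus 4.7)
The plan is to verify that $E' := \sum_{j \in \sJ} F^{j}E$ coincides with $E$. Since $F$ is exhaustive, $E = \sum_{\lambda \in A} F^{\lambda}E$, so it suffices to prove that $F^{\lambda}E \subseteq E'$ for every $\lambda \in A$.

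The key idea is to set
\begin{equation*}
T = \set{\lambda \in A}{F^{\lambda}E \not\subseteq E'}
\end{equation*}
and argue by contradiction that $T$ is empty. Suppose $\lambda \in T$. Then $\lambda$ cannot be a jump point: if it were, $F^{\lambda}E \subseteq E'$ by definition of $E'$. Hence $F^{\lambda}E = \sum_{\mu > \lambda} F^{\mu}E$. Since $F^{\lambda}E \not\subseteq E'$, there must exist some $\mu > \lambda$ with $F^{\mu}E \not\subseteq E'$, i.e.\ $\mu \in T$. Thus every element of $T$ is strictly dominated by another element of $T$.

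Using this property, I would invoke dependent choice to recursively construct an infinite strictly increasing sequence
\begin{equation*}
\lambda_{1} < \lambda_{2} < \lambda_{3} < \dotsb
\end{equation*}
with every $\lambda_{i} \in T$. Here one must be careful to pick each successor strictly greater than the previous one, which is precisely what the argument above guarantees. Now chain-separatedness yields an index $i_{0}$ such that $F^{\lambda_{i_{0}}}E = \setl{0}$. But then $F^{\lambda_{i_{0}}}E \subseteq E'$ trivially, contradicting $\lambda_{i_{0}} \in T$. Hence $T = \emptyset$, and we conclude $E = E'$.

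The main obstacle is really just bookkeeping: one must be sure that the recursive choice of $\lambda_{i+1} > \lambda_{i}$ inside $T$ is always possible (it is, by the observation that every $\lambda \in T$ admits some $\mu \in T$ with $\mu > \lambda$), and that chain-separatedness as formulated in Definition \ref{def:3} applies to this strictly increasing sequence. No use of separatedness (in the stronger sense of Definition \ref{def:4}) is needed, only chain-separatedness and exhaustivity; finite dimensionality of $E$ is not used directly in this argument either, though it is of course in force throughout the section.
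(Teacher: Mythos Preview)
Your proof is correct and takes a genuinely different route from the paper's. The paper proceeds in two stages: first it establishes that any nonzero multifiltered space with an exhaustive chain-separated multifiltration admits at least one jump point, by reducing to the one-dimensional case and invoking Lemma~\ref{lem:jpptlift} to lift jump points from a codimension-one quotient; then it argues that if $V = \sum_{j \in \sJ} F^{j}E$ were a proper subspace, the quotient $E/V$ would inherit an exhaustive chain-separated multifiltration (via Lemma~\ref{lem:sepexhds}), would therefore have a jump point $\mu_{0}$, and this $\mu_{0}$ would lift to a jump point of $E$ not lying in $\sJ$, a contradiction. Your argument bypasses both auxiliary lemmas entirely by working directly with the set $T$ of indices $\lambda$ for which $F^{\lambda}E \not\subseteq E'$ and extracting an infinite strictly increasing chain inside $T$ to contradict chain-separatedness. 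Your approach is shorter and self-contained, needing neither the quotient machinery nor the lifting lemma; the paper's approach, by contrast, isolates as an intermediate step the existence of jump points in nonzero spaces, which may be of independent use elsewhere. Your closing observation that finite-dimensionality of $E$ plays no role in this particular argument is also correct.
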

\begin{proof}
  We begin by demonstrating that if $E \neq
  \setl{0}$, then $\sJ \neq \emptyset$.

  Suppose $\dim E = 1$ and that the multifiltration on $E$ has no jump points.
  Since the multifiltration is exhaustive, there exists $\lambda_0$ such that
  $F^{\lambda_0} E= E$. As $\lambda_0$ is not a jump point $F^{\lambda_0} E=
  \sum _{\mu > \lambda_0} F^{\mu}E$. Thus, as $\dim E = 1$, there exists
  $\lambda_1 > \lambda_0$ such that $F^{\lambda_1}E = E$. Continuing, we get
  an increasing sequence
  \begin{equation*}
    \lambda_0 < \lambda_1 < \lambda_2 < \dotsb
  \end{equation*}
  such that $F^{\lambda_n} E= E$ for all $n \geq 0$. Therefore $\cap_n
  F^{\lambda_n} E= E$ which contradicts the fact that the 
  multifiltration is chain-separated. We deduce that the multifiltration admits a jump point.

  Now suppose $E$ is general and $S \subset E$ is a vector subspace such
  that $\dim E/S = 1$. Let $Q = E/S$. The induced multifiltration on $Q$ is
  chain-separated and exhaustive by Lemma \ref{lem:sepexhds}. Since $Q$ has
  dimension 1, its multifiltration admits a jump point. So, by Lemma
  \ref{lem:jpptlift}, the multifiltration on $E$ has a jump point,
  proving the claim.

  We now prove the lemma. Write $V = \sum_{\lambda \in \sJ}
  F^{\lambda}E$, and suppose that $V \neq E$. Set
  $U = E/V$. Since $U \neq \setl{0}$, the induced multifiltration admits a jump
  point say at $\mu_0$. Then by Lemma \ref{lem:jpptlift}, $\mu_0$ is a jump
  point of $E$ not contained in $\sJ$, which is a contradiction. Thus
  $E = \sum_{\lambda \in \sJ} F^{\lambda}E$.
\end{proof}

For the proof of the following lemma, we make a few definitions.

\begin{df}
  A subset $S \subset A$ is said to be \emph{bounded below} if there exists
  $\underline{a} \in A$ such that $\underline{a} \leq \lambda$ for all
  $\lambda \in S$. A subset $S \subset A$ is said to be \emph{bounded
  above}, if there exists an element $\overline{a} \in A$ such that $\lambda
  \leq \overline{a}$ for all $\lambda \in S$. $S$ is said to be
  \emph{bounded} if it is both bounded below and bounded above.
\end{df}

\begin{rmk}
  Suppose $A$ is a finitely generated abelian group and $C$ is a strict
  submonoid. If a subset $S \subset A$ is a bounded set, then it is finite.
\end{rmk}

\begin{lem}
  \label{lemm:3}
  Let $(A,C)$ be a preordered f.~g.~abelian group with $C$ a generating
  cone. Let $(E,F)$ be a finite dimensional vector space
  with $F$ separated. Then the set of jump points is bounded above. 
\end{lem}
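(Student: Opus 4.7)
The plan is to combine Lemma~\ref{lemm:7} with the separatedness hypothesis to produce an explicit upper bound. Since $C$ is a generating cone, Lemma~\ref{lemm:7} provides an element $a\in C$ such that every $x\in A$ satisfies $x\le na$ for some $n\in\N$. I aim to show that some integer multiple $Ma$ then dominates every jump point of $F$.

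The key construction will be, for each $m\ge 1$, the set
\[
  J_m=\set{\lambda\in A}{\lambda\not\le ma}.
\]
I would first verify the routine facts that each $J_m$ is upward closed (so $\widetilde{J_m}=J_m$), that $J_1\supseteq J_2\supseteq\dotsb$ because $ma\le(m+1)a$, and that $\bigcap_m J_m=\emptyset$, the last following immediately from Lemma~\ref{lemm:7}. Separatedness applied to the family $\setl{J_m}$ would then yield $\bigcap_m F^{J_m}E=\setl{0}$, and the finite-dimensionality of $E$ forces the decreasing chain $\setl{F^{J_m}E}_m$ to stabilize, so $F^{J_M}E=\setl{0}$ for some $M\ge 1$.

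Expanding this equality as $F^{J_M}E=\sum_{\lambda\not\le Ma}F^\lambda E=\setl{0}$ forces each summand $F^\lambda E$ to vanish whenever $\lambda\not\le Ma$. Since a jump point $\lambda$ has $F^\lambda E\neq\setl{0}$ (because $\gr_F^\lambda E$ is a nonzero quotient of $F^\lambda E$), one concludes $\lambda\le Ma$, and $Ma$ is the desired upper bound.

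The step I expect to require some thought is choosing the correct family on which to test separatedness: the naive choice of singletons $\setl{ma}$ would only produce $\bigcap_m F^{ma}E=\setl{0}$, which does not by itself rule out jump points incomparable to the multiples of $a$. Replacing singletons by the upward-closed complements $J_m$ is what translates Lemma~\ref{lemm:7} directly into a constraint on every $\lambda$ not below $Ma$.
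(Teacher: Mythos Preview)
Your proposal is correct and essentially identical to the paper's own proof: the paper defines the same sets $K_n=\set{\lambda\in A}{\lambda\nleq na}$, observes they are upward closed, decreasing, and have empty intersection by Lemma~\ref{lemm:7}, and then applies separatedness together with finite-dimensionality exactly as you do. Your concluding remark about why singletons $\setl{ma}$ would not suffice is a helpful addition but not part of the paper's argument.
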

\begin{proof}
  Let $a$ be the element provided by Lemma \ref{lemm:7}.
 Define 
  \begin{equation*}
    K_n = \set{\lambda \in A}{\lambda \nleq na},\  n\in \N.
  \end{equation*}
  Note that $K_n = \widetilde{K_n}$ and
  \begin{equation*}
    \bigcap_{n\in \N} \widetilde{K_n} = \bigcap K_n = \set{\lambda \in A}{\lambda
    \nleq na\ \forall n \in \N} = \emptyset,
  \end{equation*}
  by the property defining $a$. Thus by separatedness,
  \begin{equation*}
    \bigcap F^{K_n} E = \setl{0}.
  \end{equation*}
  On the other hand, $F^{K_n} E$ is a decreasing filtration:
  \begin{equation*}
    F^{K_1} E \supset F^{K_2} E \supset F^{K_3} E \supset \dotsb
  \end{equation*}
  Since $E$ is finite dimensional, $\cap_n F^{K_n} E = F^{K_{n_0}} E$
  for some $n_0 \in \N$. Thus, $F^{K_{n_0}} E = \setl{0}$. Now note that if
  $\lambda \in \sJ$, $F^{\lambda} E \neq \setl{0}$. As $F^{K_{n_0}} E =
  \sum_{\lambda \nleq n_0 a} F^{\lambda} E = \setl{0}$, $\lambda \in \sJ$
  implies that $\lambda \leq n_0 a$ and hence $n_0 a$ is an upper bound for
  $\sJ$. 
\end{proof}

When the submonoid  $C$ is a polyhedral strict cone, there is an even stronger
result. 

\begin{prop}\label{prop:2}
  Let $(A,C)$ be a preordered f.~g.~abelian group with $C$ a polyhedral
  generating strict cone. Let $(E,F)$ be a finite dimensional vector space
  with $F$ separated. Then the set of jump points is finite. 
\end{prop}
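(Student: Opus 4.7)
The plan is to combine Lemma \ref{lemm:3} with a Dickson-style finiteness argument for antichains and Mirsky's theorem for chain decompositions.

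First, by Lemma \ref{lemm:3} (applied with $F$ separated and $C$ generating), the set $\sJ$ of jump points is bounded above in $A$ by some element $\overline{a}$.

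Next, I would show that every antichain contained in $\sJ$ is finite. Since $C$ is strict, $A$ is torsion free by Lemma \ref{lemm:6}. Since $C$ is a polyhedral generating cone, one can write $C = \set{x \in A}{\omega_j(x) \ge 0 \text{ for } j = 1, \dots, t}$ with integer linear functionals $\omega_j$ (the facet normals) whose span is all of $(A \otimes \Q)^{\vee}$ (using that $C$ is full dimensional in $A \otimes \Q$). The resulting map $\psi = (\omega_1, \dots, \omega_t)\colon A \to \Z^t$ is then injective, and by the very definition of $C$ one has $\psi(a) \le \psi(a')$ in $\Z^t$ if and only if $a \le a'$ in $A$, so $\psi$ both preserves and reflects the order. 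Any bounded-above antichain $S \subset A$ therefore gives a bounded-above antichain $\psi(S) \subset \Z^t$; translating by $-\psi(\overline{a})$ produces an antichain in $-\N^t$, which by Dickson's lemma is finite, so $S$ itself is finite.

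Third, I would bound chain lengths in $\sJ$. If $\lambda_1 < \lambda_2 < \dots < \lambda_k$ are jump points, then being a jump at $\lambda_i$ gives
\begin{equation*}
F^{\lambda_i} E \supsetneq \sum_{\mu > \lambda_i} F^{\mu} E \supset F^{\lambda_{i+1}} E,
\end{equation*}
so the $F^{\lambda_i} E$ form a strictly decreasing chain of nonzero subspaces of $E$ (each $F^{\lambda_i} E$ is nonzero because $\lambda_i$ is a jump), which forces $k \le \dim E$.

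Finally, I would apply Mirsky's theorem. Define $h \colon \sJ \to \setl{1, \dots, \dim E}$ sending $\lambda$ to the length of the longest chain of jump points terminating at $\lambda$. Each level set $h^{-1}(k)$ is an antichain: if $\lambda < \mu$ with $h(\lambda) = h(\mu) = k$, appending $\mu$ to a longest chain terminating at $\lambda$ yields a chain of length $k+1$ terminating at $\mu$, contradicting $h(\mu) = k$. By the second step each $h^{-1}(k)$ is finite, and $\sJ = \bigsqcup_{k=1}^{\dim E} h^{-1}(k)$ is therefore finite. The main fiddly point will be the Dickson step, specifically the verification that the facet-normal map $\psi$ reflects the order and setting up Dickson's lemma cleanly; conceptually nothing is deep, but this is where the polyhedrality hypothesis must be used with care.
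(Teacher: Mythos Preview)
Your argument is correct and takes a genuinely different route from the paper's. The paper proceeds by induction on $\rk(A)$: after the upper bound from Lemma~\ref{lemm:3}, it finds a second element $b$ with no jump point strictly below it, so that $\sJ$ is trapped in finitely many affine hyperplane slices $A_{i,n}=\{m_i=n\}$ coming from the facet normals; on each slice it sets up a shifted multifiltration indexed by the facet $C\cap A_{i,0}$, invokes Proposition~\ref{prop:1}\eqref{item:6} to see that this restricted multifiltration is again separated, and finishes by the inductive hypothesis. Your approach replaces all of this with two clean combinatorial facts: antichains in $\sJ$ are finite (embedding $A$ order-reflectingly into $\Z^{t}$ via the facet normals and applying Dickson's lemma to the upper-bounded image), and chains in $\sJ$ have length at most $\dim E$ (since jump points force strict drops in dimension); Mirsky's height function then partitions $\sJ$ into at most $\dim E$ finite antichains. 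Your proof is shorter and more self-contained---it avoids both the induction and the rather delicate Proposition~\ref{prop:1}\eqref{item:6}---while the paper's slicing argument is more in the spirit of the rest of the section and reuses the machinery already developed there. The one point worth writing out carefully in your version is exactly the one you flag: that strictness of $C$ forces the facet normals to span $(A\otimes\Q)^{\vee}$, so that $\psi$ is injective on the torsion-free group $A$, and that the defining description $C=\{x:\omega_j(x)\ge 0\ \forall j\}$ makes $\psi$ an order embedding (not merely order-preserving), which is what lets you transport antichains to $\Z^{t}$.
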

\begin{proof}
  By Lemma \ref{lemm:3}, the set of jump points is bounded above. We
  prove the result by induction on the rank of $A$. If $\rk(A)=0$,
  then $A$ is finite and we are done. Assume that $\rk(A)=r>0$.

  If, for every $a \in A$ there is a $\mu \in \sJ$ with $\mu
  <a $, we can construct an infinite sequence of jump points
  \begin{displaymath}
    \mu _{1}> \mu _{2}>\dots
  \end{displaymath}
  Which implies that $F^{\mu _{1}}E\subsetneqq F^{\mu
    _{2}}E\subsetneqq \dots$, contradicting the fact that $E$ is
  finite dimensional. Thus, there are two elements $a,b\in A$ such
  that all the jump points $\lambda \in \sJ$  satisfy
  \begin{equation}
    \label{eq:1}
    \lambda \le a,\qquad \lambda \not < b.
  \end{equation}

  Since $C$ is a polyhedral cone, there exists a finite number of linear maps
  $m_{i}\in (A\otimes \Q)^{\vee}$, $i=1,\dots,r$, such that
  \begin{displaymath}
    C=\set{\lambda \in A}{m_{i}(\iota(\lambda ))\ge 0, \forall i, 1 \leq i
  \leq r},
  \end{displaymath}
  where $\iota\colon A\to A\otimes \Q$ is the canonical map, and the
  $m_{i}$ are the equations describing the facets of $C$. Since
  $\iota (A)$ is a full lattice of $A\otimes \Q$, we can assume that
  the $m_{i}$ have integral values on $A$. Denote
  \begin{displaymath}
    A_{i,n}=\set{\lambda \in A}{m_{i}(\iota(\lambda ))=n}.
  \end{displaymath}
  This is a torsor over the abelian subgroup $A_{i,0}$.

  By the condition \eqref{eq:1}, we deduce
  \begin{displaymath}
    \sJ\subset \bigcup_{i=1}^{r} \ \bigcup_{m_{i}(b)\le n \le m_{i}(a)}
    A_{i,n}. 
  \end{displaymath}
  This is a finite union of torsors.

  Denote $C_{i}=C \cap A_{i,0}$. For each $i,n$, choose an element
  $\lambda _{i,n}\in A_{i,n}$ and define a multifiltration $F_{i,n}$
  indexed by $(A_{i,0},C_{i})$, given by 
  \begin{displaymath}
    F_{i,n}^{\lambda }E=F^{\lambda +\lambda _{i,n}}E.
  \end{displaymath}
  Clearly if $\lambda \in A_{i,n}$ is a jump point of $(E,F)$, then
  $\lambda -\lambda _{i,n}$ it is a  jump point of $(E,F_{i,n})$. Since
  the $m_{i}$ are the equations of the facets of $C$, we are in the
  situation of Proposition
  \ref{prop:1}~\eqref{item:6}, thus the multifiltration $F_{i,n}$ is
  separated. By induction the number of jump points of $(E,F_{i,n})$
  is finite. Hence the number of jump points of $(E,F)$ is finite.
 \end{proof}

 The condition of being polyhedral is necessary in Proposition
 \ref{prop:2} as the following example shows.
 \begin{ex}
   Let $A=\Z^{2}$ and
   \begin{displaymath}
     C=\set{(m,n)\in A}{m\ge 0,\ 0\le n\le \sqrt{2}m}
   \end{displaymath}
   Let $E$ be a one dimensional vector space and 
   $F$ the multifiltration
   \begin{displaymath}
     F^{(m,n)}E=
     \begin{cases}
       E,&\text{ if }m<0, \sqrt{2}m \le n\le 0,\\
       0,&\text{ otherwise}.
     \end{cases}
   \end{displaymath}
   Then the multifiltration is separated but the number of jump points in
   infinite. Indeed, for each $m>0$, let $(p_{m},q_{m})$ the pair of
   positive integers with
   \begin{equation}
     \label{eq:5}
     q_{m}\le m, 0\le p_{m}\le \sqrt{2}q_{m}
   \end{equation}
   such that $p_{m}/q_{m}$ has strictly minimal distance to the line
   $y=\sqrt{2}x$ among the points satisfying conditions \eqref{eq:5}. Since
   this distance can be arbitrarily small but 
   never zero, the set of different $(p_{m},q_{m})$ is infinite. And,
   for each $(p_{m},q_{m})$ in that set,  the
   point $(-p_{m},-q_{m})$ is a jump point.

   The key of this example, besides the irrationality of the slope, is
   that $F^{(0,0)}E=0$. If 
   we change slightly the multifiltration writing $F^{(0,0)}E=E$, then
   the multifiltration has a single jump point.
 \end{ex}

 There is a converse to Proposition \ref{prop:2}.

 \begin{prop}
   Let $(A,C)$ be a preordered f.~g.~abelian group with $C$ generating, Let $(E,F)$
   be a multifiltered vector space with $F$ chain-separated and with a
   finite number of jump points. Then $F$ is separated.
   \label{prp:csfjissp}
 \end{prop}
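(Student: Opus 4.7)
The plan is to first establish the following subclaim, which is the technical heart of the argument, and then deduce the proposition in short order: for every $\lambda\in A$,
\[
F^\lambda E \;=\; \sum_{j\in \sJ,\ j\ge \lambda} F^j E,
\]
where $\sJ$ is the (finite) set of jump points of $F$. Once this is in hand, one uses Lemma \ref{lemm:7} to locate a single element $Na\in C$ dominating every jump point; the hypothesis $\bigcap_\alpha \widetilde{K}_\alpha=\emptyset$ then forces some $\widetilde{K}_{\alpha^*}$ to omit $Na$, hence by upward-closedness to omit every jump point, and the subclaim collapses $F^{\widetilde{K}_{\alpha^*}}E$ to $\setl{0}$.

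I would prove the subclaim by contradiction using chain-separatedness. The forward containment is immediate, so suppose the set $T$ of $\lambda\in A$ for which $F^\lambda E\not\subset \sum_{j\in \sJ,\, j\ge \lambda} F^jE$ is nonempty. Any $\lambda\in T$ satisfies $F^\lambda E\neq \setl{0}$, so a strictly increasing chain in $T$ would contradict chain-separatedness; thus starting from any $\lambda_0\in T$ one climbs to a maximal $\mu^*\in T$ with $\mu^*\ge \lambda_0$. For every $\nu>\mu^*$ one has $\nu\notin T$, so
\[
F^\nu E \;\subset\; \sum_{j\in \sJ,\ j\ge \nu} F^j E \;\subset\; \sum_{j\in \sJ,\ j\ge \mu^*} F^j E.
\]
If $\mu^*\in \sJ$ then $F^{\mu^*}E$ appears directly as a term on the right; otherwise $F^{\mu^*}E = \sum_{\nu>\mu^*} F^\nu E$ by definition of a jump point, and the displayed inclusion yields the same conclusion. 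Either way $\mu^*\notin T$, contradicting the choice of $\mu^*$.

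For the main argument, invoke Lemma \ref{lemm:7} to obtain $a\in C$ such that every $x\in A$ satisfies $x\le na$ for some $n\in \N$. Because $\sJ$ is finite, choose $n_j$ with $j\le n_j a$ for each $j\in\sJ$ and set $N=\max_j n_j$, so that $j\le Na$ for every $j\in\sJ$. Given $\{K_\alpha\}$ with $\bigcap_\alpha \widetilde{K}_\alpha=\emptyset$, pick $\alpha^*$ with $Na\notin \widetilde{K}_{\alpha^*}$. Upward-closedness of $\widetilde{K}_{\alpha^*}$ together with $j\le Na$ forces $j\notin \widetilde{K}_{\alpha^*}$ for every $j\in \sJ$, that is, $\sJ\cap \widetilde{K}_{\alpha^*}=\emptyset$. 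For each $\lambda\in \widetilde{K}_{\alpha^*}$ the subclaim and upward-closedness of $\widetilde{K}_{\alpha^*}$ give $F^\lambda E\subset \sum_{j\in \sJ\cap \widetilde{K}_{\alpha^*}} F^j E = \setl{0}$; summing over $\lambda$ yields $F^{\widetilde{K}_{\alpha^*}} E = \setl{0}$, whence $\bigcap_\alpha F^{\widetilde{K}_\alpha}E = \setl{0}$.

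The main obstacle is the subclaim. It asserts that $F$ is reconstructible from its finitely many jump points via upward sums, and the chain-separatedness hypothesis---rather than only the finiteness of $\sJ$---is precisely what ensures the Noetherian-style maximality argument terminates. Given the subclaim, the remainder is a short application of Lemma \ref{lemm:7} together with the elementary observation that any upward-closed set omitting $Na$ must also omit every element $\le Na$, in particular every jump point.
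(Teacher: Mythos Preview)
Your proof is correct and follows the same line as the paper's: bound $\sJ$ above, choose $\alpha^*$ with that bound outside $\widetilde{K}_{\alpha^*}$ (forcing $\sJ\cap\widetilde{K}_{\alpha^*}=\emptyset$), and use chain-separatedness to kill $F^{K_{\alpha^*}}E$; the only organizational difference is that you isolate the identity $F^\lambda E=\sum_{j\in\sJ,\,j\ge\lambda}F^jE$ as a standalone subclaim and prove it by a maximality argument, whereas the paper builds the infinite chain directly inside $\widetilde{K}_{\alpha_0}$. One small over-citation: the proposition only assumes $C$ is generating, not that $C$ is a cone, so Lemma~\ref{lemm:7} is not available as stated---but you use it solely to get an upper bound for the \emph{finite} set $\sJ$, and that follows at once from directedness of $\le_C$ (Lemma~\ref{lemm:1}), which is exactly what the paper invokes.
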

 \begin{proof}
   Let $\setl{K^{\alpha }}$ be a collection of subsets with
   $\bigcap_{\alpha }\widetilde K_{\alpha }=\emptyset$. Since the
   set of jump points $\sJ$ is finite and $C$ is generating, there is
   an $a\in A$ such that $\lambda \le a$ for all $\lambda \in
   \sJ$. Then there is an $\alpha_{0} $ such that $a\not \in \widetilde
   K_{\alpha_{0} }$, which implies that $\sJ\cap \widetilde
   K_{\alpha_{0} }=\emptyset$. We claim that $F^{K_{\alpha
       _{0}}}E=\setl{0}$. If this is not the case, there is a $\lambda
   _{1}\in \widetilde K_{\alpha _{0}}$ such that $F^{\lambda
     _{1}}E\not = 0.$

   Since $\lambda _{1}$ is not a jump point, there is a $\lambda
   _{2}>\lambda _{1}$ with $F^{\lambda
     _{2}}E\not = 0 $. Repeating this process, we obtain an infinite
   sequence $\lambda _{1}<\lambda _{2}<\dots$ with $F^{\lambda
     _{i}}E\not = 0 $. By the chain-separatedness of $F$ we deduce
   that  $\bigcap_{i }F^{\lambda
     _{i}}E= 0 $. By the finite dimensionality of $E$, there is an
   $i_{0}$ such that $F^{\lambda
     _{i_{0}}}E= 0$ contradicting the construction of the $\lambda
   _{i}$. 
 \end{proof}

In Proposition \ref{prop:2}, the hypothesis that the cone is
polyhedral can be 
relaxed if we  add the condition of being regular. 

\begin{lem}
  Let $(A, C)$ be a preordered f.~g.~abelian group, with $C$ being a strict
  generating cone.  Suppose $(E, F)$ is a decreasing exhaustive
  regular separated multifiltration indexed
  by $(A, C)$.  Then the number of jump points is finite.
  \label{lem:finjumps}
\end{lem}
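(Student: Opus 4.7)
The plan is to argue by contradiction: assuming $\sJ$ is infinite, I will produce an infinite strictly increasing chain $j_{k_0} < j_{k_1} < \dots$ of jump points in $\sJ$, which forces an infinite strictly descending sequence of subspaces of $E$ and contradicts $\dim E < \infty$.

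First, pick any infinite sequence of distinct jumps $j_1, j_2, \dots \in \sJ$. The subspaces $V_n = \sum_{i\le n} F^{j_i}E$ form a nondecreasing chain in the finite-dimensional space $E$, so they stabilize: there exists $N$ with $V_n = V_N$ for all $n \ge N$. In particular, for every $i > N$,
\[
  F^{j_i}E \subseteq \sum_{k=1}^{N} F^{j_k}E,
\]
so $F^{j_i}E = F^{j_i}E \cap F^{\{j_1,\dots,j_N\}}E$. Applying the regularity property (Definition~\ref{def:5}) with $K_1 = \{j_i\}$ and $K_2 = \{j_1,\dots,j_N\}$ gives
\[
  F^{j_i}E \;=\; \sum_{\substack{\nu \ge j_i \\ \nu \ge j_k \text{ for some } k \le N}} F^\nu E.
\]
If $j_i$ itself were \emph{not} in this index set, every $\nu$ appearing in the sum would satisfy $\nu \ge j_i$ with $\nu \ne j_i$, and since $C$ is strict this forces $\nu > j_i$, yielding $F^{j_i}E \subseteq \sum_{\nu > j_i} F^\nu E$ and contradicting $j_i \in \sJ$. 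Therefore $j_i \ge j_k$ for some $k \le N$, and strictness of $C$ together with $j_i \ne j_k$ promotes this to $j_i > j_k$.

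By the pigeonhole principle, some fixed $k_0 \in \{1,\dots,N\}$ satisfies $j_i > j_{k_0}$ for infinitely many $i$. Restricting to this infinite subfamily and running the entire argument again yields some $j_{k_1} > j_{k_0}$ above which infinitely many jumps still lie; iterating produces an infinite strict chain $j_{k_0} < j_{k_1} < j_{k_2} < \dots$ of jumps. To finish, I observe that whenever $j < j'$ are both jumps one must have $F^{j}E \supsetneqq F^{j'}E$, for otherwise $F^{j}E = F^{j'}E \subseteq \sum_{\nu > j} F^\nu E$ would contradict $j \in \sJ$. The chain above therefore produces an infinite strictly descending sequence of subspaces of the finite-dimensional space $E$, which is the desired contradiction.

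The main conceptual obstacle is recognizing that regularity alone --- together with the strictness of $C$ and the finite-dimensionality of $E$ --- drives this argument, and that the right lever is to apply regularity to a finite ``generating'' subfamily of jumps produced by dimensional stabilization and then pigeonhole one's way to an infinite ascending chain. It is worth noting that exhaustivity and separatedness, although part of the hypotheses, are not actually invoked in the core reasoning.
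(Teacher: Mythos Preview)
Your proof is correct and takes a genuinely different route from the paper's.

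The paper argues by showing that $\sJ$ is bounded. It first invokes Lemma~\ref{lem:jumpssum} (which needs exhaustivity and chain-separatedness) to extract a finite subset $\sJ_f \subset \sJ$ with $E = \sum_{\lambda \in \sJ_f} F^\lambda E$. It then uses separatedness (Lemma~\ref{lemm:3}) to bound $\sJ$ above, and uses regularity together with the generating hypothesis on $C$ to bound $\sJ$ below: since $C$ is generating, $\sJ_f$ has a common lower bound $a$, and regularity applied to $\{\mu\}$ versus $\sJ_f$ shows that any jump $\mu$ with $a \nleq \mu$ would satisfy $F^\mu E \subsetneqq F^\mu E$, a contradiction. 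Finally, a bounded set in a strict order on a f.g.\ abelian group is finite.

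Your approach bypasses boundedness entirely and manufactures an infinite strictly ascending chain of jump points directly, via dimensional stabilization of $\sum_{i\le n} F^{j_i}E$, regularity applied to $\{j_i\}$ versus the stabilizing family, and iterated pigeonholing. The resulting chain $j_{k_0} < j_{k_1} < \dots$ of jumps forces an infinite strictly descending chain of subspaces of $E$, which is impossible. As you correctly note, this uses only strictness of $C$, regularity of $F$, and finite-dimensionality of $E$; exhaustivity and separatedness are never invoked, and in fact neither is the hypothesis that $C$ be generating or a cone. So your argument actually proves a stronger statement than the lemma as written. The paper's route has the minor advantage of producing explicit upper and lower bounds for $\sJ$, but your route is shorter and isolates regularity as the essential hypothesis.
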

\begin{proof}
  Suppose $\sJ$ is the set of all jump points and it is infinite. As proved
  above, $E = \sum_{\lambda \in \sJ} F^{\lambda} E$. Since $E$ is finite
  dimensional, there exists a finite subset $\sJ_f \subset \sJ$ such that
  \begin{equation*}
    E = \sum_{\lambda \in \sJ_f} F^{\lambda} E.
  \end{equation*}
  We shall prove that $\sJ$ is bounded. The fact that $\sJ$ is bounded above
  follows from separatedness of $F$ (see Lemma \ref{lemm:3}).
  
  Suppose that $\sJ$ is not bounded below. Since $\sJ_{f}$ is finite
  and $C$ is a generating cone, there is an $a$ such that $a \leq \lambda$
  for all $\lambda \in \sJ_f$. Since $\sJ$ is not bounded below, we can
  choose a $\mu \in \sJ$ such that $a \nleq \mu$. By regularity and by
  the fact that $\mu$ is a jump point, we 
  see that
  \begin{equation*}
    F^{\mu} E \cap \left(
    \sum_{
	  \tau \in \sJ_{f}    }
    F^{\tau} E \right)
    =
    \sum_{
      \substack{
        \tau \in \sJ_{f}^{\sim} \\
        \tau \geq \mu
      }
    }
    F^{\tau} E
    \stackrel{*}{=}
    \sum_{
      \substack{
        \tau \in \sJ_{f}^{\sim} \\
        \tau > \mu
      }
    }
    F^{\tau} E
    \neq
    F^{\mu} E,
  \end{equation*}
  where the equality marked $*$ follows from the fact that $\mu \notin
  \sJ^{\sim}$.

  On the other hand,
  \begin{equation*}
    F^{\mu} E \cap \left( 
    \sum_{\tau \in \sJ_f} F^{\tau} E
    \right)
    = F^{\mu} E \cap E = F^{\mu} E,
  \end{equation*}
  which is a contradiction. Thus $\sJ$ is bounded below.

  In conclusion, $\sJ$ is bounded in $A$ and hence is finite.

%
%
\end{proof}

\begin{rmk}
  \begin{enumerate}
  \item If $A$ is a totally  ordered f.~g.~abelian group and $C$ is the cone of
	all positive elements, then every multifiltration by $(A, C)$ is regular.
  \item If $A$ is a lattice, in the sense of poset theory, (i.~e.~$C$
    is a unimodular cone) then 
    regularity implies that
    \begin{displaymath}
      E_{\lambda }\cap E_{\lambda '}=E_{\lambda \vee \lambda '}, 
    \end{displaymath}
    but, in general
    \begin{displaymath}
      E_{\lambda }+ E_{\lambda '}\not = E_{\lambda \land \lambda '}
    \end{displaymath}
  \end{enumerate}
\end{rmk}

The main result of the paper is the following result that
characterized when the associated graded vector space has the same
dimension as the original one.

\begin{thm}  \label{thm:cridsdec}
  Let $(A, C)$ be a preordered f.~g.~abelian group, where $C$ is a
  strict generating cone; and $(E,
  F)$ a multifiltration indexed by $(A, C)$, where $E$ is a finite
  dimensional vector space. The multifiltration $F$ is exhaustive, separated
  and regular if and only if there exists a (non-canonical) decomposition
  \begin{displaymath}
    E=\bigoplus _{\lambda \in A} E_{\lambda}
  \end{displaymath}
  such that
  \begin{displaymath}
	F^{\lambda} E = \bigoplus_{\mu \ge \lambda } E_{\mu}.
  \end{displaymath}
\end{thm}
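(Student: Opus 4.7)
The plan is to prove the two implications separately.

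For the ``if'' direction, the verification is direct. Given $E = \bigoplus_{\lambda \in A} E_\lambda$ with $F^\lambda E = \bigoplus_{\mu \geq \lambda} E_\mu$, one checks that for any subset $K \subset A$, $F^K E = \bigoplus_{\mu \in \widetilde K} E_\mu$. This identity immediately yields exhaustivity (take $K = A$), separatedness (if $\bigcap_\alpha \widetilde{K_\alpha} = \emptyset$, then $\bigcap_\alpha F^{K_\alpha} E = \bigoplus_{\mu \in \bigcap_\alpha \widetilde{K_\alpha}} E_\mu = \setl{0}$), and regularity ($F^{K_1} E \cap F^{K_2} E = \bigoplus_{\mu \in \widetilde{K_1} \cap \widetilde{K_2}} E_\mu = F^{\widetilde{K_1} \cap \widetilde{K_2}} E$).

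For the ``only if'' direction, by Lemma \ref{lem:finjumps} the set $\sJ$ of jump points is finite, and by Lemma \ref{lem:jumpssum}, $E = \sum_{\sigma \in \sJ} F^\sigma E$. For each $\lambda \in \sJ$, I would choose a subspace $E_\lambda \subset F^\lambda E$ complementary to $H_\lambda = \sum_{\mu > \lambda} F^\mu E$, and set $E_\mu = \setl{0}$ for $\mu \notin \sJ$. To verify that $\sum_{\lambda \in \sJ} E_\lambda$ is a direct sum, suppose $\sum v_\lambda = 0$ with $v_\lambda \in E_\lambda$ not all zero, and pick $\lambda_0$ minimal in the poset $\sJ$ among $\{\lambda : v_\lambda \neq 0\}$. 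Let $T$ consist of the remaining nonzero-indexed jump points; each $\lambda \in T$ is either strictly greater than, or incomparable to, $\lambda_0$. Applying regularity to $\{\lambda_0\}$ and $T$, the element $v_{\lambda_0} = -\sum_{\lambda \in T} v_\lambda$ lies in $\sum_{\nu \in \widetilde{\{\lambda_0\}} \cap \widetilde T} F^\nu E$; the minimality of $\lambda_0$ rules out $\nu = \lambda_0$ from the indexing set (no $\lambda \in T$ satisfies $\lambda \leq \lambda_0$), so every contributing $\nu$ is strictly greater than $\lambda_0$. Hence $v_{\lambda_0} \in H_{\lambda_0} \cap E_{\lambda_0} = \setl{0}$, a contradiction.

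It remains to prove spanning and the filtration formula. The key technical step is the auxiliary identity $F^\mu E = \sum_{\sigma \in \sJ,\, \sigma \geq \mu} F^\sigma E$ for all $\mu \in A$. The inclusion $\supset$ is trivial; for $\subset$, I would apply regularity with $K_1 = \{\mu\}$ and $K_2 = \sJ$, combined with $E = F^\sJ E$, to obtain $F^\mu E = \sum_{\nu \in \widetilde{\{\mu\}} \cap \widetilde \sJ} F^\nu E$, and then use the defining property that $\nu \notin \sJ$ implies $F^\nu E = \sum_{\nu' > \nu} F^{\nu'} E$, together with the finiteness of $\sJ$, to iteratively push each contribution onto $\sJ$ itself. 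Once this identity is available, $H_\lambda = \sum_{\sigma \in \sJ,\, \sigma > \lambda} F^\sigma E$ for $\lambda \in \sJ$, and a downward induction along the poset $\sJ$ shows $F^\lambda E \subset V = \sum_{\lambda' \in \sJ} E_{\lambda'}$ for every $\lambda \in \sJ$: the base case ($\lambda$ maximal in $\sJ$) gives $H_\lambda = \setl{0}$ and hence $F^\lambda E = E_\lambda \subset V$. Exhaustivity combined with $E = F^\sJ E$ then yields $E = V$, and a final application of the auxiliary identity and the same induction gives $F^\lambda E = \bigoplus_{\mu \geq \lambda} E_\mu$ for every $\lambda \in A$. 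The main obstacle is the auxiliary identity itself, where regularity has to be combined with the finiteness of $\sJ$ in a genuinely non-trivial way; all other steps reduce to routine poset induction once it is in place.
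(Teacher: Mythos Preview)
Your proof is correct and follows essentially the same strategy as the paper's: choose complements $E_\lambda$ of $\sum_{\mu>\lambda}F^\mu E$ inside $F^\lambda E$, use regularity together with a minimality argument to establish directness (your argument here is verbatim the paper's proof of its item~(3)), and then verify $F^\lambda E=\bigoplus_{\mu\ge\lambda}E_\mu$ by an inductive/iterative process over the finite set $\sJ$ of jump points.

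The only organisational difference lies in how the auxiliary identity $F^\mu E=\sum_{\sigma\in\sJ,\,\sigma\ge\mu}F^\sigma E$ is obtained. The paper applies Lemmas~\ref{lem:sepexhds} and~\ref{lem:jumpssum} to the subspace $F^\mu E$; you instead intersect $F^\mu E$ with $F^{\sJ}E=E$ and invoke regularity directly, which is a nice alternative. One small imprecision: the termination of your ``iterative pushing onto $\sJ$'' is not a consequence of the finiteness of $\sJ$ alone. What you actually need is that every $\nu$ appearing in the process satisfies $\mu\le\nu$ and $F^\nu E\neq 0$, hence lies in the bounded interval $\{\nu:\mu\le\nu\le n_0 a\}$ furnished by the proof of Lemma~\ref{lemm:3}; since $C$ is strict this interval is finite, so strict increase forces termination. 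Once you phrase it this way the argument is complete, and thereafter your downward induction on $\sJ$ is equivalent to the paper's upward ``peeling off minimal elements'' algorithm.
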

\begin{proof}
  Assume that such a decomposition exists. Clearly the multifiltration is
  separated and exhaustive. Let $K_1,K_2\subset A$ be two subsets. Then
  \begin{align*}
    \sum_{\lambda \in K_1} F^{\lambda} E \cap \sum_{\mu \in K_2} F^{\mu} E
    &= \bigoplus_{\lambda' \in \widetilde{K}_{1}} E_{\lambda'} \cap
      \bigoplus_{\mu' \in \widetilde{K}_{2}} E_{\mu'} \\
	& = \bigoplus_{\mu' \in \widetilde{K}_{1} \cap \widetilde{K}_{2}}
	E_{\mu'},\\
    & = \sum_{\mu' \in \widetilde{K}_{1} \cap  \widetilde{K}_{2}} F^{\mu'} E.
  \end{align*}

  Conversely, assume that the multifiltration is exhaustive, separated and
  regular. Define
  \begin{displaymath}
	E_{\lambda}=F^{\lambda} E \left / \sum_{\mu > \lambda} F^{\mu} E \right
	.
  \end{displaymath}
  Write $p_{\lambda} \colon F^{\lambda} E \to E_{\lambda}$ the projection
  and choose a section $s_{\lambda} \colon E_{\lambda} \to F^{\lambda} E$.
  We identify $E_{\lambda}$ with $s_{\lambda }(E_{\lambda})\subset E$. But
  note that this identification is non-canonical.

  We have to prove:
  \begin{enumerate}
	\item \label{item:1} $E = \sum_{\lambda \in A} E_{\lambda}$;
	\item \label{item:2} $F^{\lambda} E = \sum_{\mu \ge \lambda } E_{\mu}$;
	\item \label{item:3} $\forall S', S'' \subset A$, with $S' \cap S'' =
	  \emptyset$,
	  \begin{displaymath}
		\left( \sum_{\lambda \in S'} E_{\lambda} \right ) \cap \left(
		\sum_{\mu  \in S''} E_{\mu} \right ) = \setl{0}.
  \end{displaymath}
\end{enumerate}
Since the multifiltration is assumed to be exhaustive, property
\eqref{item:2} implies property \eqref{item:1}.

For the sake of clarity we define the
following.  Let $S \subset A$. We say that $S$ satisfies condition $(*)$ if
\begin{displaymath}
  (*) \qquad \forall \lambda \in S, \ \mu  > \lambda ,\  \mu \text{ is a jump
    point}\qquad
  \Longrightarrow \qquad \mu \in S.
\end{displaymath}

We now prove \eqref{item:2}, so let $\lambda \in A$ and 
$x \in F^{\lambda} E$. Write $x_{0}=x$ and
\begin{displaymath}
  S_{0}=\left\{\mu \in A\,\middle | \, \mu \ge \lambda , \mu \text{ is a
  jump point} \right \}.
\end{displaymath}
Then $S_0$ is finite (from Lemma \ref{lem:finjumps}) and $x_{0} \in
\sum_{\mu \in S_{0}} F^{\mu} E$ (from lemmas \ref{lem:sepexhds} and
\ref{lem:jumpssum}). Note that, $S_0$ satisfies condition $(*)$.

Assume that we have a subset $S_{n}\subset S_{0}$ which satisfies condition
$(*)$, and an element $x_{n}\in \sum_{\mu \in S_{n}} F^{\mu} E$. Let
$\mu_{n,1},\dotsc,\mu_{n,k_{n}}$ the minimal elements of $S_{n}$.  This is a
nonempty finite list since $S_{n}$ is finite.  Write
\begin{displaymath}
  x_{n}=\sum_{\mu \in S_{n}} x_{\mu }, \quad x_{\mu } \in F^{\mu} E.
\end{displaymath}
For $i=1,\dots,k_{n}$, let
\[
  x_{n,i} = s_{\mu_{n,i}} p_{\mu_{n,i}} x_{\mu_{n,i}} \in E_{\mu_{n,i}}
  \text{ and }
  y_{n}=\sum_{i=1}^{k_{n}} x_{n,i}.
\]
Set $x_{n+1}=x_{n}-y_{n}$ and $S_{n+1}=S_{n} \setminus \{\mu _{n,1},\dots
,\mu_{n,k_{n}}\}$.  Then $S_{n+1} \subsetneq S_{n}$ and satisfies the
condition $(*)$.  Note
\begin{equation*}
  x_{n+1} = \sum_{i=1}^{k_n} \big( x_{\mu_{n,i}} - s_{\mu_{n,i}}
  p_{\mu_{n,i}} (x_{\mu_{n,i}}) \big) + \sum_{\lambda \in S_{n+1}}
  x_{\lambda}.
\end{equation*}
Since
\begin{displaymath}
	p_{\mu_{n,i}} \big(x_{\mu_{n,i}} - s_{\mu_{n,i}} p_{\mu_{n,i}}
	(x_{\mu_{n,i}}) \big) = 0,
\end{displaymath}
by lemmas \ref{lem:sepexhds}, \ref{lem:jumpssum} and the fact that
$S_n$ satisfies condition $(*)$,  we have
\begin{displaymath}
  x_{\mu_{n,i}} - s_{\mu_{n,i}} p_{\mu_{n,i}} (x_{\mu_{n,i}}) \in
  \sum_{\lambda \in S_{n+1}} F^{\lambda} E,
\end{displaymath}
and hence $x_{n+1} \in \sum_{\mu \in S_{n+1}} F^{\mu} E$. So we can iterate
the process. By the finiteness of $S_{0}$ there is an $n_{0}$ such that
$S_{n_{0}+1}=\emptyset$. Then
\begin{displaymath}
  x= \sum_{n=0}^{n_{0}}\sum_{i=1}^{k_{n}} x_{n,i}
\end{displaymath}
proving \eqref{item:2}.
  
  We next prove \eqref{item:3}. Assume that the intersection contains a
  point different from zero. Let
  \begin{displaymath}
	0 \neq x \in \left( \sum_{\lambda \in S'} E_{\lambda} \right )
    \cap
    \left( \sum_{\mu  \in S''} E_{\mu} \right ).
  \end{displaymath}
  Thus we can write
  \begin{displaymath}
	x = \sum_{\lambda \in S'} x_{\lambda} = \sum_{\mu \in S''} y_{\mu},
	\quad x_{\lambda } \in E_{\lambda}, \quad y_{\mu }\in E_{\mu}.
  \end{displaymath}
  By shrinking $S'$ and $S''$ if needed, we can assume that, $x_{\lambda
  }\not = 0 $ for all $\lambda \in S'$ and that $y_{\mu }\not = 0 $ for all
  $\mu \in S''$. Let $\lambda _{0}$ be a minimal element of $S'\cup S''$. To
  fix ideas assume that $\lambda _{0}\in S'$. For $\lambda \in S'\setminus
  \{\lambda _{0}\}$ write $y_{\lambda }= -x_{\lambda }$. Also write
  $S'''=(S'\cup S'')\setminus \{\lambda _{0}\}$. Then
  \begin{displaymath}
	x_{\lambda_{0}}=\sum_{\mu \in S'''} y_{\mu }.
  \end{displaymath}
  By the regularity of the multifiltration and the minimality of $\lambda_{0}$,
  we deduce
  \begin{displaymath}
	x_{\lambda _{0}}\in \sum_{\substack{\mu '\ge \lambda _{0}\\\mu' \in
	(S''')^{\sim}}} F^{\mu'} E \subset \sum_{\mu ' > \lambda _{0}} F^{\mu'}
	E.
  \end{displaymath}
  Since $x_{\lambda _{0}} \in E_{\lambda_{0}}$, $x_{\lambda_{0}}=0$
  reaching a contradiction. Therefore we deduce \eqref{item:3} and we
  complete the proof of the theorem. 
\end{proof}

\section{Equivariant sheaves on toric varieties}
\label{sec:equiv-sheav-toric}

Klyachko
\cite{Klyachko} has given a classification of equivariant vector
bundles on toric varieties. This 
classification has been extended to the classification of equivariant
quasi-coherent sheaves on toric varieties and that of equivariant
torsion-free sheaves by Perling \cite{Perling2004}.
The results of the previous section allow us to reformulate Perling's conditions for
an equivariant coherent sheaf to be 
locally free in terms of regularity.

\emph{Throughout this section, we shall be working over a fixed
  algebraically closed base field $k$.}

We fix a lattice $N \cong \Z^n$, $M = \Hom_{\Z}(N, \Z)$, $N_{\R} = N
\otimes_{\Z} \R$, $M_{\R} = M \otimes_{\Z} \R$, and a fan $\Sigma$ in
$\N_{\R}$.  For a cone $\sigma \in \Sigma$, $M(\sigma)$ will denote the
lattice $M / (\sigma^{\perp} \cap M)$. The projection $M \to M(\sigma)$ is
denoted by $\pi_{\sigma}$. For a cone $\sigma \in \Sigma$, $\dual{\sigma}$,
by abuse of notation, will denote $\dual{\sigma} \cap M$ and $\pi_{\sigma}
(\dual{\sigma})$ will stand for $\pi_{\sigma} (\dual{\sigma} \cap M)$.  Let
$X = X_{\Sigma}$ denote the toric variety defined by $\Sigma$. By $x_0 \in
X_{\Sigma}$ we denote the identity in the embedded torus $U_{\setl{0}} \subset
X_{\Sigma}$.

\begin{df}\label{def:11}
  An \emph{equivariant sheaf} $\caE$ on a toric variety $X_{\Sigma}$ is a
  sheaf of modules on $X_{\Sigma}$ along with a collection of isomorphisms
  \[
    \Phi_t \colon t^* \caE \xrightarrow{\cong} \caE
  \]
  for every $t \in \T = U_{\setl{0}}$, such that the following cocycle
  condition holds: for every pair $t, t' \in \T$, the diagram
  \[
    \xymatrix{
      t^{\ast} t'^{\ast} \caE \ar[dr]_{t^{\ast} \Phi_{t'}} &
      (t't)^{\ast} \caE \ar[l]_{\cong} \ar[r]^-{\Phi_{t't}} &
      \caE \\
      &
      t^{\ast} \caE \ar[ur]_{\Phi_t} &
    }
  \]
commutes.
\end{df}
\begin{rmk}
  If $\caE$ is an equivariant sheaf according to Definition \ref{def:11},
  and $U$ is an invariant open subset, 
  there is an induced action of $\T$
  on $\gs{U}{\caE}$ that, for a section $s \in \gs{U}{\caE}$, is given by 
  \[
    t \cdot s = \Phi_{t^{-1}}  ((t^{-1})^{\ast} s).
  \]
  This  action is similar to that used by
  Klyachko \cite{Klyachko}, Payne \cite{Payne:moduli} and others.
  However, this action differs from the one used in Perling, who
  considers a right
  action on $X$ and a left action on $\caE$.

  The action of $\T$ on $\gs{U}{\caE}$ induces a grading
  \begin{displaymath}
    \gs{U}{\caE}=\bigoplus_{m\in M}\grgs{U}{\caE}{m},
  \end{displaymath}
where $s\in \grgs{U}{\caE}{m}$ if and only if
\begin{displaymath}
  t\cdot s = \chi^{m}(t) s= t^{m}s.
\end{displaymath}
\end{rmk}

\begin{ex}\label{exm:6}
  The structural sheaf $\caO_{X}$ is an equivariant sheaf. If $U$ is
  an invariant set, the action of $\T$ of $\gs{U}{\caO_{X}}$ is given
  by
  \begin{displaymath}
    (t\cdot f)(x) = f(t^{-1}x).
  \end{displaymath}
  Therefore, the character $\chi^{m}\in \gs{\T}{\caO_{X}}$ belongs to
  $\grgs{\T}{\caO_{X}}{-m}$, as the following chain of equalities shows
  \begin{displaymath}
    (t\cdot \chi^{m})(x)=\chi^{m}(t^{-1}x)
    =t^{-m}x^{m}=(\chi^{-m}(t)\chi^{m})(x). 
  \end{displaymath}
\end{ex}

To describe equivariant quasi-coherent sheaves, Perling \cite{Perling2004}
used the concept of $\Delta$-families. We define an analogue of
$\Delta$-families for multifiltrations. Let $N$, $M$, $\Sigma$, and $M(\sigma)$
for $\sigma \in \Sigma$ be as in the beginning of this section. Note that, if
$\tau \face \sigma$, we have a natural quotient map $\pi_{\tau \sigma} :
M(\sigma) \to M(\tau)$. We note here that $\pi_{\sigma}(\dual{\sigma})$ is always
a strict cone, and hence induces a partial order on $M(\sigma)$.  To
ease the notation, 
the orders induced in the spaces $M(\sigma )$ will be denoted by
$\le$, without any decoration 
corresponding to the cone. The particular order one is
considering will be clear by the space to which the elements
belong. By contrast, for orders on $M$ we
shall add a decoration to $\le$ indicating the cone defining the order.

\begin{df}\label{def:10}
  Let $E$ be a vector space. A \emph{$\Sigma$-compatible family of
  multifiltrations} on $E$ is a collection of
  multifiltrations $\set{F_{\sigma}}{\sigma \in \Sigma}$ where
  $F_{\sigma}$ is indexed by $(M(\sigma),
  \pi_{\sigma}(\dual{\sigma}))$, and such that whenever $\tau \face \sigma$,
  \begin{displaymath}
    F_{\tau }=\Ind_{\pi_{\tau \sigma } }(F_{\sigma }).
  \end{displaymath}
  In other words,
  \[
	F_{\tau}^{\lambda} E = \sum_{
		\mu \colon
		\pi_{\tau\sigma} (\mu) \ge \lambda
	} F_{\sigma}^{\mu} E.
  \]
  
  A $\Sigma$-compatible family of multifiltrations is said to be
  \emph{exhaustive} 
  (respectively \emph{separated}, \emph{regular}) if $F_{\sigma}$ is
  exhaustive (respectively separated, exhaustive) for all $\sigma \in
  \Sigma$.
\end{df}

\begin{rmk}
  A $\Sigma $-compatible family of multifiltrations is completely
  determined by the multifiltrations $F_{\sigma }$ for top dimensional
  cones. The fact that, given a common face $\tau \prec \sigma $,
  $\tau \prec \sigma '$ it is necessary that
  \begin{displaymath}
    \Ind_{\pi _{\tau \sigma }}(F_{\sigma })=\Ind_{\pi _{\tau \sigma'
      }}(F_{\sigma' }) 
  \end{displaymath}
  determines the compatibility conditions among such multifiltrations.

  In view of propositions \ref{prop:4}, \ref{prop:1} and \ref{prop:3},
  in order to check that a $\Sigma $-compatible family of
  multifiltrations is exhaustive, separated or regular it is enough to
  check it on maximal cones.
\end{rmk}

Following Perling, we shall see that $\Sigma$-compatible families of
multifiltrations arise from $\T$-equivariant, coherent, torsion-free
sheaves.  
In order to compare with Perling's results, we adapt \cite[Definition
5.17]{Perling2004}  to decreasing filtrations. In order to distinguish
from Definition \ref{def:10} we call them $\Sigma $-compatible
systems. 

\begin{df}\label{def:12}
  Let $E$ be a vector space. A \emph{$\Sigma$-compatible system of
  multifiltrations} on $E$ is a collection of
  multifiltrations $\set{F_{\sigma}}{\sigma \in \Sigma}$ where
  $F_{\sigma}$ is indexed by $(M,\dual{\sigma})$, satisfying the
  conditions
    \begin{enumerate}
  \item \label{item:19} For every $\sigma \in \Sigma $, $E=\bigcup_{m\in  M}F_{\sigma
    }^{m}E$.
  \item \label{item:20} For each chain $\cdots <_{\sigma }
    m_{i-1}<_{\sigma } m_{i}<_{\sigma }\cdots$ of characters in $M$
    there is an $i_{0}$ such that $F^{m_{i}}E=0$ for all $i\ge i_{0}$.
  \item \label{item:21} There are only finitely many vector spaces
    $F^{m}_{\sigma }E$ that are not contained in the sum of $F_{\sigma
    }^{m'}$ with $m<_{\sigma }m'$.
  \item \label{item:22} (Compatibility condition) For each $\tau \prec
    \sigma $, with $\tau ^{\vee}=\sigma ^{\vee}+\Z_{\ge 0}(-m_{\tau
    })$ and $m_{\tau }\in \sigma ^{\vee}$, consider the 
    chains  $m-i\cdot m_{\tau }$ for $i\ge 0$. It is decreasing with
    respect to the order $\le_{\sigma }$. By condition \eqref{item:21}
    and the fact that $E$ is finite dimensional, this sequence becomes
    stationary for some $i_{\tau }^{m}\in \Z$. The compatibility
    condition requires that $F^{m}_{\tau }E=F^{m-i_{\tau
      }^{m}\cdot m_{\tau }}_{\sigma }$ for all $m\in M$.
  \end{enumerate}
\end{df}

\begin{lem} \label{lemm:8}
  Let 
  $\set{F_{\sigma}}{\sigma \in \Sigma}$ be a $\Sigma $-compatible system
  of multifiltrations on a finite dimensional vector space $E$,
  then $\set{\Ind_{\pi _{\sigma}}(F_{\sigma})}{\sigma \in \Sigma}$ is
  an exhaustive and separated $\Sigma $-compatible family of
  multifiltrations. Conversely, if 
  $\set{F_{\sigma}}{\sigma \in \Sigma}$ is a $\Sigma $-compatible
  family, that is separated and exhaustive, then $\set{\Res^{\pi _{\sigma
      }}(F_{\sigma})}{\sigma \in \Sigma}$ is a $\Sigma $-compatible
  system.  
\end{lem}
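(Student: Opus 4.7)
The plan is to verify the two directions separately, exploiting the observation that in the system setting, $F_\sigma^m E$ depends only on $\pi_\sigma(m)$. Indeed, $\sigma^\perp \cap M$ is exactly the subgroup of quasi-zero elements of $(M,\sigma^\vee)$, so for any $q \in \sigma^\perp \cap M$ we have $m \le_\sigma m+q$ and $m+q \le_\sigma m$, forcing $F_\sigma^m E = F_\sigma^{m+q} E$. Together with the fact that $\pi_\sigma(m') \ge \pi_\sigma(m)$ in $M(\sigma)$ if and only if $m' \ge_\sigma m$ in $M$, this gives the clean identification
\begin{displaymath}
\Ind_{\pi_\sigma}(F_\sigma)^{\pi_\sigma(m)}E \;=\; F_\sigma^{m}E,
\end{displaymath}
consistent with Proposition \ref{prop:6}. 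This is the bridge translating between systems and families in both directions.

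For the forward direction, given a $\Sigma$-compatible system, the family compatibility reduces via $\pi_\tau = \pi_{\tau\sigma}\circ\pi_\sigma$ and Proposition \ref{prop:7} to showing $\Ind_{\pi_\tau}(F_\tau^{\mathrm{sys}}) = \Ind_{\pi_\tau}(F_\sigma^{\mathrm{sys}})$. The key point is that $m_\tau \in \sigma^\vee \cap \tau^\vee \cap (-\tau^\vee) = \tau^\perp \cap M$, so $\pi_\tau(m - i_\tau^m m_\tau) = \pi_\tau(m)$; the system compatibility $F_\tau^m E = F_\sigma^{m - i_\tau^m m_\tau}E$ then matches the two sums term-by-term. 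Exhaustiveness is immediate from condition \eqref{item:19} of Definition \ref{def:12}. For separateness, condition \eqref{item:20} gives chain-separateness of $\Ind_{\pi_\sigma}(F_\sigma)$ (noting that strict $<_\sigma$-chains in $M$ descend to strict chains in $M(\sigma)$, again because $\sigma^\perp \cap M$ is the quasi-zero subgroup), while condition \eqref{item:21} translates into the finiteness of jump points of $\Ind_{\pi_\sigma}(F_\sigma)$; applying Proposition \ref{prp:csfjissp} with the generating strict cone $\pi_\sigma(\sigma^\vee)$ yields separateness.

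For the reverse direction, set $F_\sigma^{\mathrm{sys},m}E = F_\sigma^{\pi_\sigma(m)}E$ and check the four conditions of Definition \ref{def:12}. Condition \eqref{item:19} follows from surjectivity of $\pi_\sigma$ and exhaustiveness of $F_\sigma$. Condition \eqref{item:20} uses Lemma \ref{lemm:2} to pass from separateness to chain-separateness of the family $F_\sigma$ (the cone $\pi_\sigma(\sigma^\vee)$ is strongly strict since it is a strictly convex polyhedral cone, cf.\ the first Example after Corollary \ref{cor:1}), and then pulls the strict chain back through $\pi_\sigma$. Condition \eqref{item:21} follows from Proposition \ref{prop:2} applied to the separated family on the polyhedral strict generating cone $\pi_\sigma(\sigma^\vee)$. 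For condition \eqref{item:22} (the compatibility), expand $F_\tau^{\pi_\tau(m)}E$ using the family relation $F_\tau = \Ind_{\pi_{\tau\sigma}}(F_\sigma)$: lifting $\mu \in M(\sigma)$ to $m' \in M$ converts the sum into one over $\{m' : m' \ge_\tau m\}$ of $F_\sigma^{\pi_\sigma(m')}E$; using $\tau^\vee = \sigma^\vee + \Z_{\ge 0}(-m_\tau)$ we write $m' = m + a - j m_\tau$ with $a \in \sigma^\vee$, $j \ge 0$, and the $a \ne 0$ contributions are absorbed by $a=0$ (since $F_\sigma$ is a decreasing family), so the sum reduces to $\sum_{j\ge 0} F_\sigma^{\pi_\sigma(m-jm_\tau)}E$, which stabilises precisely at $j = i_\tau^m$.

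The main obstacle is the last step: untangling the indexing conventions and reconciling the facet compatibility in Definition \ref{def:10} (formulated via $\Ind_{\pi_{\tau\sigma}}$ in $M(\sigma)$) with that of Definition \ref{def:12} (formulated via stabilisation of the sequence $\{m-jm_\tau\}_{j\ge 0}$ inside $M$). The verification that $m_\tau \in \tau^\perp \cap M$, and the use of the decomposition of $\tau^\vee$ to rewrite the induction sum as a one-parameter stabilising sum, are the two essential pieces of bookkeeping.
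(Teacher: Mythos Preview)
Your proposal is correct and follows essentially the same route as the paper's proof: both directions invoke the same chain of results (Propositions \ref{prop:4}, \ref{prop:5}, \ref{prop:7}, Lemma \ref{lemm:2}, Proposition \ref{prop:2}, Proposition \ref{prp:csfjissp}) to transfer exhaustiveness, chain-separatedness, finiteness of jump points, and separatedness back and forth, and both verify the compatibility condition by unwinding the decomposition $\tau^{\vee}=\sigma^{\vee}+\Z_{\ge 0}(-m_{\tau})$ together with $m_{\tau}\in\tau^{\perp}\cap M$. The only cosmetic difference is that you front-load the identification $\Ind_{\pi_\sigma}(F_\sigma)^{\pi_\sigma(m)}E=F_\sigma^{m}E$ (Proposition \ref{prop:6}) and use Proposition \ref{prop:7} to reduce the forward compatibility to $\Ind_{\pi_\tau}(F_\tau)=\Ind_{\pi_\tau}(F_\sigma)$, whereas the paper computes both sides of $\Ind_{\pi_{\tau\sigma}}(\Ind_{\pi_\sigma}(F_\sigma))=\Ind_{\pi_\tau}(F_\tau)$ directly; the content is the same.
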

\begin{proof}
  Let $\set{F_{\sigma}}{\sigma \in \Sigma}$ a $\Sigma $-compatible system
  of multifiltrations on $E$. Condition \ref{item:19} implies that,
  for all $\sigma $, $F_{\sigma }$ is exhaustive. By Proposition
  \ref{prop:4}~\eqref{item:11} the multifiltrations $\Ind_{\pi_{\sigma
    }}F_{\sigma }$, $\sigma \in \Sigma $ are exhaustive. Condition
  \ref{item:20} implies 
  that, for each $\sigma $ the multifiltration $F_{\sigma }$ is
  chain-separated.  By Proposition \ref{prop:5} the multifiltration
  $\Ind_{\pi_{\sigma }}F_{\sigma }$ is chain-separated. Condition
  \ref{item:21} implies that  the multifiltrations $\Ind_{\pi_{\sigma
    }}F_{\sigma }$, $\sigma \in \Sigma $ have a finite number of jump
  points. By Proposition \ref{prp:csfjissp} the multifiltrations $\Ind_{\pi_{\sigma
    }}F_{\sigma }$ are separated.

  Next we have to show that condition \ref{item:22} implies that, for
  $\tau \prec \sigma $ the equality
  \begin{displaymath}
    \Ind_{\pi _{\tau \sigma} }(\Ind_{\pi _{\sigma }}(F_{\sigma })) =
    \Ind_{\pi _{\tau  }}F_{\tau  }   
  \end{displaymath}
  holds. Let $m '\in M(\tau )$ and choose $m\in M$ with
  $\pi _{\tau }(m )=m'$. Then
  \begin{displaymath}
    \Ind_{\pi _{\tau \sigma} }(\Ind_{\pi _{\sigma }}(F_{\sigma }))^{m'}E
    = \sum_{\mu \colon m'\le \pi _{\tau \sigma }(m)}
    \Ind_{\pi _{\sigma }}(F)^{\mu }E
    = \sum _{\mu \colon m \underset{\dual{\tau }}{\le}\mu }F^{\mu }_{\sigma }E
  \end{displaymath}
  Note that, in the above expression, $m'\le \pi _{\tau \sigma
  }(\mu )$ relates two elements of $M(\tau )$, therefore the only
  order that makes sense is the one given by the cone $\pi _{\tau
  }(\dual{\tau })$ and is not necessary to make it explicit. By
  contrast, the expression  $m \underset{\dual{\tau }}{\le}\mu$
  involves two elements of $M$ where we consider different
  orders. Therefore the need to make the order explicit.

  By condition \ref{item:22},
  \begin{displaymath}
    \Ind_{\pi_{\tau }}(F_{\tau })^{m '}E=F_{\tau }^{m} E=F_{\sigma
    }^{m-i_{\tau }^{m}\cdot m_{\tau }}E,
  \end{displaymath}
  where $m_{\tau }\in \dual{\sigma }$ satisfying $\dual{\tau
  }=\dual{\sigma }+\Z_{\le 0} m_{\tau }$ and $i_{\tau }^{m}$ satisfies
  that, for all $k\in \Z$,
  \begin{displaymath}
    F_{\sigma }^{m-k\cdot m_{\tau }}E \subset
      F_{\sigma
    }^{m-i_{\tau }^{m}\cdot m_{\tau }}E.
  \end{displaymath}
  Note that $m_{\tau }\in \orth{\tau}$, hence
  $m\underset{\dual{\tau}}{\le} m-i^{m}_{\tau }\cdot m_{\tau }$. Which
  implies
  \begin{displaymath}
     \Ind_{\pi_{\tau }}(F_{\tau })^{m '}E\subset \Ind_{\pi _{\tau
         \sigma} }(\Ind_{\pi _{\sigma }}(F_{\sigma }))^{m'}E.
  \end{displaymath}
  For the reverse inclusion, note that, if $m \underset{\dual{\tau
    }}{\le}\mu$ then $\mu -m\in \dual{\tau }$. By the properties of
  $m_{\tau }$, there is a $k\ge 0$ such that $\mu -m+k\cdot m_{\tau
  }\in \dual{\sigma }$, so $m-k\cdot m_{\tau } \underset{\dual{\sigma 
    }}{\le}\mu$. Therefore
  \begin{displaymath}
    F^{\mu }_{\sigma }E\subset F^{m-k\cdot m_{\tau }}_{\sigma
    }E\subset F_{\sigma
    }^{m-i_{\tau }^{m}\cdot m_{\tau }}E,
  \end{displaymath}
  from which we deduce 
  \begin{displaymath}
     \Ind_{\pi_{\tau }}(F_{\tau })^{m '}E\supset \Ind_{\pi _{\tau
         \sigma} }(\Ind_{\pi _{\sigma }}(F_{\sigma }))^{m'}E.
  \end{displaymath}
  In conclusion $\set{\Ind_{\pi _{\sigma }}(F_{\sigma })}{\sigma \in
    \Sigma }$ is a $\Sigma $-compatible family of multifiltrations on
  $E$ that is separable and exhaustive.

  Conversely, assume that $\set{F_{\sigma }}{\sigma \in
    \Sigma }$ is a exhaustive and separable $\Sigma $-compatible
  family of multifiltrations on 
  $E$. By Proposition \ref{prop:4}~\ref{item:12} the multifiltrations
  $\Res ^{\pi _{\sigma }}F_{\sigma }$ are exhaustive, hence they
  satisfy condition \ref{item:19}. By Lemma
  \ref{lemm:2} and Proposition  \ref{prop:5}  the multifiltrations
  $\Res ^{\pi _{\sigma }}F_{\sigma }$ are
  chain-separated, so they satisfy condition \ref{item:20}. By
  Proposition  \ref{prop:2} the multifiltrations $F_{\sigma }$ have a
  finite number of jump points, which implies that the multifiltrations  
  $\Res ^{\pi _{\sigma }}F_{\sigma }$  satisfy condition
  \ref{item:21}.  
  
  We need to prove that the multifiltrations $\Res ^{\pi _{\sigma
    }}F_{\sigma }$ satisfy condition \ref{item:22}. This amounts to
  prove that, for all $m\in M$,
  \begin{equation} \label{eq:6}
    \Res^{\pi _{\tau }}(F_{\tau })^{m}E=
     \sum_{k\in \Z_{\ge 0}}\Res^{\pi _{\sigma }}(F_{\sigma })^{m-k\cdot m_{\tau }}E.
   \end{equation}
   On the one hand,
   \begin{displaymath}
     \Res^{\pi _{\tau }}(F_{\tau })^{m}E
     = F_{\tau }^{\pi _{\tau }(m)}E
     =\sum_{\mu\colon \pi _{\tau }(m)\le \pi _{\tau \sigma }(\mu )}F^{\mu }_{\sigma }E,
   \end{displaymath}
   where the last equality follows form the $\Sigma $-compatibility of
   the family. On the other hand
   \begin{displaymath}
     \sum_{k\in \Z}\Res^{\pi _{\sigma }}(F_{\sigma })^{m+k\cdot
       m_{\tau }}E
     =
     \sum_{k\in \Z_{\ge 0}}F_{\sigma }^{\pi _{\sigma }(m-k\cdot m_{\tau })}E.
   \end{displaymath}
   Since $\pi _{\tau \sigma }(\pi _{\sigma }(m-k\cdot m_{\tau }))=\pi
   _{\tau }(m)$  we deduce the inclusion $\supset$ is equation
   \eqref{eq:6}. As discussed before, for each $\mu \in M(\sigma )$
   with $\pi _{\tau  }(m)\le \pi _{\tau \sigma }(\mu )$, there is a
   $k\ge 0$ such that $\pi _{\sigma }(m-k\cdot m_{\tau })\le \mu $,
   and the inclusion $\subset $ in \eqref{eq:6} follows. Therefore
   $\set{\Res^{\pi _{\sigma }}F_{\sigma }}{\sigma \in \Sigma }$ is a
   $\Sigma $-compatible system of multifiltrations. 
 \end{proof}

To describe reflexive sheaves we will need the following definition.
\begin{df}
  Let $\Sigma$ be a fan and $F_{\Sigma} = \set{F_{\sigma}}{\sigma \in
  \Sigma}$ be a $\Sigma$-compatible family of multifiltrations on a
finite dimensional vector space $E$.
  $F_{\Sigma}$ is said to be \emph{reflexive} if for all $\mu \in M$ and for
  all $\sigma \in \Sigma$,
  \[
	F_{\sigma}^{\mu} E = \bigcap_{\rho \in \sigma(1)}
	F_{\rho}^{\pi _{\rho \sigma}(\mu)} E,
  \]
  where $\sigma(1)$ is the collection of rays in $\Sigma(1)$ which are faces
  of $\sigma$.
\end{df}

The following result is a reformulation of Perling and Klyachko
results in the language of multifiltrations. 

\begin{thm} \label{thm:1}
  Suppose $X_{\Sigma}$ is a toric variety over $k$ given by a fan
  $\Sigma$ on a lattice $N$. Then the following holds:
  \begin{enumerate}
	\item \label{itm:torsfree}
	  The category of equivariant, coherent, torsion free sheaves on
	  $X_{\Sigma}$ is equivalent to the category of finite
          dimensional vector spaces provided with a separated, exhaustive
	  $\Sigma$-compatible family of  multifiltrations.
	\item \label{itm:reflexive}
	  The category of equivariant, coherent, reflexive sheaves on
	  $X_{\Sigma}$ is equivalent to the category of  finite
          dimensional vector spaces provided with a separated, exhaustive,
	  reflexive $\Sigma$-compatible family of multifiltrations.
	\item \label{itm:locfrees}
	  The category of equivariant, coherent, locally free sheaves on
	  $X_{\Sigma}$ is equivalent to the category of finite
          dimensional vector spaces provided with a  separated, regular,
	  exhaustive $\Sigma$-compatible family of multifiltrations.
  \end{enumerate}
\end{thm}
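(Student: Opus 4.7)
The plan is to reduce all three statements to two inputs: Perling's classification of equivariant coherent torsion-free sheaves on $X_{\Sigma}$ in terms of $\Sigma$-compatible systems (Definition \ref{def:12}), and the dictionary between such systems and separated exhaustive $\Sigma$-compatible families established in Lemma \ref{lemm:8}. Composing those two equivalences immediately yields \eqref{itm:torsfree}, once one checks that $\Res^{\pi_{\sigma}}$ and $\Ind_{\pi_{\sigma}}$ are functorial on morphisms (which follows from the adjunction proved earlier and the fact that a morphism of equivariant sheaves corresponds, under Perling, to a linear map of the underlying weight-$0$ vector space compatible with every $F_{\sigma}^{m}$).

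For \eqref{itm:reflexive} I would work first on the sheaf side. Since $X_{\Sigma}$ is normal, a coherent torsion-free sheaf $\caE$ is reflexive exactly when it satisfies Serre's condition $S_{2}$, which on each affine chart $U_{\sigma}$ amounts to the equality
\begin{displaymath}
  \caE(U_{\sigma}) \;=\; \bigcap_{\rho\in\sigma(1)} \caE(U_{\rho}),
\end{displaymath}
the complement of $\bigcup_{\rho} U_{\rho}$ in $U_{\sigma}$ having codimension at least two. Passing to weight components and applying the dictionary of \eqref{itm:torsfree} — where the weight-$m$ part of $\caE(U_{\sigma})$ corresponds to $F_{\sigma}^{\pi_{\sigma}(m)}E$ and the inclusion $\caE(U_{\sigma})\hookrightarrow \caE(U_{\rho})$ corresponds to the face map $\pi_{\rho\sigma}$ — this translates, weight by weight, into exactly the condition of Definition 4.9.

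For \eqref{itm:locfrees} I use that local freeness is local on the affine cover $\{U_{\sigma}\}$, so it suffices to characterize those equivariant torsion-free $\caO_{U_{\sigma}}$-modules that are free. By Klyachko's theorem (in the decreasing convention used here), on $U_{\sigma}$ this happens precisely when the multifiltered vector space $(E,F_{\sigma})$ admits an $M(\sigma)$-grading $E=\bigoplus_{m\in M(\sigma)} E_{m}$ such that $F_{\sigma}^{\lambda}E=\bigoplus_{m\ge \lambda}E_{m}$. Since $\pi_{\sigma}(\dual{\sigma})$ is a strict generating cone in $M(\sigma)$, Theorem \ref{thm:cridsdec} characterizes the existence of such a decomposition by exhaustivity, separatedness and regularity of $F_{\sigma}$. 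Combined with \eqref{itm:torsfree} and the remark following Definition \ref{def:10} that these three properties may be tested on maximal cones, this yields the stated equivalence.

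The main obstacle I expect is a careful matching between Klyachko's freeness criterion and the decreasing-filtration conventions of this paper, specifically checking that the weight decomposition of a free equivariant $\caO_{U_{\sigma}}$-module produces exactly the graded decomposition supplied by Theorem \ref{thm:cridsdec}, including the correct direction of the inequality $m\ge\lambda$ with respect to $\pi_{\sigma}(\dual{\sigma})$. Everything else is functorial bookkeeping: morphism compatibility in \eqref{itm:reflexive} and \eqref{itm:locfrees} is inherited from the equivalence of \eqref{itm:torsfree}, since reflexivity, regularity and local freeness all carve out full subcategories on either side.
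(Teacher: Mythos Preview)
Your proposal is correct and follows the same overall architecture as the paper: part \eqref{itm:torsfree} via Perling's classification plus Lemma~\ref{lemm:8}, part \eqref{itm:reflexive} via the $S_{2}$/codimension-two description of reflexivity, and part \eqref{itm:locfrees} via the grading criterion of Theorem~\ref{thm:cridsdec}.

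The one place where your route diverges from the paper is in \eqref{itm:locfrees}. You invoke Klyachko's criterion directly in the form ``$(E,F_{\sigma})$ admits an $M(\sigma)$-grading with $F_{\sigma}^{\lambda}E=\bigoplus_{m\ge\lambda}E_{m}$'', and then feed this straight into Theorem~\ref{thm:cridsdec}. The paper instead takes Perling's version of the criterion, which is stated only in terms of the \emph{ray} filtrations: there is a grading $E=\bigoplus_{m}E_{m}^{\sigma}$ with $F_{\rho}^{i}E=\sum_{\pi_{\rho\sigma}(m)\ge i}E_{m}^{\sigma}$ for each $\rho\in\sigma(1)$. It then uses reflexivity and the equivalence ``$m\le\mu$ in $M(\sigma)$ $\Leftrightarrow$ $\pi_{\rho\sigma}(m)\le\pi_{\rho\sigma}(\mu)$ for all $\rho\in\sigma(1)$'' to upgrade the ray-wise statement to the full $F_{\sigma}$ decomposition before applying Theorem~\ref{thm:cridsdec}; in the converse direction it checks that regularity forces reflexivity via the same equivalence. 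Your shortcut is fine provided you can cite the $\sigma$-based form of Klyachko's criterion; if you are relying on Perling, you will need to insert exactly this ray-to-cone bridge, which is also where reflexivity enters implicitly in your argument.
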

\begin{proof}
  Let $\caE$ be an equivariant torsion free coherent sheaf on
  $X_{\Sigma }$. 
We start by recalling the construction that associates to $\caE$ a
vector space with a collection of 
multifiltrations.
Let $E=\caE_{x_{0}}$ be the fibre at zero of $\caE$. Since $\caE$ is
coherent, this is a finite dimensional $k$-vector space.

Let $U$ be a $\T$-invariant open set in $X$. The action of $\T$ on
$\gs{U}{\caE}$ induces an $M$ grading
\[
  \gs{U}{\caE} = \bigoplus_{m \in M} \gs{U}{\caE}_{m},
\]
where $\gs{U}{\caE}_{m}$ is the eigenspace corresponding to the
character $\chi^{m}$.

Thus for each $\sigma \in \Sigma$, let $U_{\sigma}$ be the affine open
subset
\begin{displaymath}
  U_{\sigma} = \Spec k [\dual{\sigma} \cap M] \hookrightarrow X.
\end{displaymath}
By Perling
\cite[page 190]{Perling2004}, $\grgs{U_{\sigma}}{\caE}{m}$ injects into
$E$ by sending a section $s$ to its value $s(x_{0})\in \caE_{x_{0}}=E$. Denote the
image of this injection by $F_{\sigma}^{m}E$. This defines a decreasing 
multifiltration on $E$ indexed by $(M, \dual{\sigma})$. To see this we
have to show that, for $m\in M$ and $\mu \in \dual{\sigma }$, the
inclusion $F^{m}_{\sigma }E\subset F^{m-\mu }_{\sigma } E$
holds. Indeed, if $v\in F_{\sigma }^{m}E$, there is a section $s\in
\grgs{U_{\sigma  }}{\caE}{m}$ with $s(x_{0})=v$. According to Example
\ref{exm:6}, the character $\chi^{\mu }$ belongs to  $\grgs{U_{\sigma }}{\caO}{-\mu
}$.   Therefore $\chi^{\mu}s\in \grgs{U_{\sigma  }}{\caE}{m-\mu
}$. Since
\begin{displaymath}
  (\chi^{\mu}s)(x_{0})=\chi^{\mu}(x_{0})s(x_{0})=1v=v,
\end{displaymath}
we deduce that $v\in F_{\sigma }^{m-\mu }E$.

By \cite[Theorem 5.18]{Perling2004}, the assignment $\caE\mapsto
\set{F_{\sigma}}{\sigma \in \Sigma }$ is an equivalence of categories
between the category of equivariant torsion free coherent sheaves on
$X_{\Sigma }$ with the category of finite dimensional vector bundles
over $k$ provided with a $\Sigma $-compatible system of
multifiltrations.  By Lemma \ref{lemm:8}, this last category is
equivalent to the category of finite dimensional vector spaces
provided with an exhaustive and separated $\Sigma $-compatible family
of multifiltrations. This completes the proof of \eqref{itm:torsfree}.

  Statement \eqref{itm:reflexive} follows directly from \cite[theorem
  5.19]{Perling2004}. Thus it remains to prove \eqref{itm:locfrees}.

  Perling, in the proof of \cite[Theorem~5.22]{Perling2004}, where he deduces
  Klyachko's classification result, shows that an equivariant coherent
  reflexive sheaf
  corresponding to a $\Sigma $-compatible family of multifiltrations
  $\setl{F_{\sigma }}$ 
  is locally free if and only if, for each $\sigma \in \Sigma$, there
  exists a decomposition 
  \[
	E = \bigoplus_{m \in M(\sigma )} E_m^{\sigma} \quad \text{ such that
        }\quad  F_{\rho }^{i}E =
	\sum_{m \colon \pi _{\rho \sigma }(m) \geq i} E_m^{\sigma}
  \]
  for each ray $\rho \in \sigma(1)$.

  Assume that for $\sigma \in \Sigma $ such decomposition
  exists. Using that the sheaf is reflexive,
  \begin{displaymath}
    F^{m}_{\sigma }E
    =\bigcap_{\rho \in \sigma (1)}F_{\rho }^{\pi _{\rho \sigma }(m)}E
    =\bigcap_{\rho \in \sigma (1)}\ 
    \sum_{\mu \colon \pi _{\rho \sigma}(m)\le \pi _{\rho \sigma}(\mu )}E_{\mu }^{\sigma }.
  \end{displaymath}
  Given $m,\mu \in M(\sigma )$,  the following two condition
  \begin{description}
  \item[A] \label{item:A} $\forall \rho \in \sigma (1), \ \pi _{\rho 
    \sigma}(m)\le \pi _{\rho \sigma}(\mu )$
\item[B]  \label{item:B} $m\le \mu $
\end{description}
are equivalent. Therefore
   \begin{displaymath}
    F^{m}_{\sigma }E= 
    \sum_{\mu \colon m\le \mu }E_{\mu }^{\sigma }.
  \end{displaymath}
  By Theorem \ref{thm:cridsdec}, since the family of multifiltrations is
  exhaustive and reflexive, this implies that the multifiltration
  $F_{\sigma }$ is regular.

  Conversely, assume that  $\caE$ is an equivariant coherent torsion
  free sheaf  such that the corresponding exhaustive and separated
  $\Sigma $-compatible family of multifiltrations is regular. By
  Theorem  \ref{thm:cridsdec}, for each $\sigma \in \Sigma $ there is
  a decomposition $E=\bigoplus _{m\in M(\sigma )}E^{\sigma }_{m}$ such
  that
  \begin{displaymath}
    F_{\sigma }^{m}E=\bigoplus _{\mu \colon m\le \mu }E^{\sigma }_{\mu}.
  \end{displaymath}
  If $\rho \in \sigma (1)$ is a ray then
  \begin{equation}\label{eq:7}
    F_{\rho }^{i}E=\sum _{\mu\colon i\le \pi_{\rho\sigma }(\mu
      )}F_{\sigma }^{\mu }E
    =\sum _{\mu\colon i\le \pi_{\rho\sigma }(\mu
      )}E^{\sigma }_{\mu }.
  \end{equation}
  Using again the equivalence of conditions \textbf{A} and
  \textbf{B}, we deduce that the
  family of multifiltrations is reflexive, hence the sheaf $\caE$ is
  reflexive and, by \eqref{eq:7} is locally free. 
\end{proof}

\begin{rmk}\label{rem:3}
  It is instructive to write down Theorem \ref{thm:1} in the case of
  affine toric varieties. In this case we do not need to worry about
  compatibility conditions among the multifiltrations because all of
  them are induced by a single multifiltration corresponding to the
  maximal cone.

  Let $(M,C)$ be a torsion free finitely
  generated partially ordered abelian group and $C$ a generating polyhedral
  cone. Write $N=M^{\vee}$, $\sigma =C^{\vee}$, $X_{\sigma }$ the
  corresponding affine toric variety and $x_{0}\in X_{\Sigma }$ the
  distinguished point. Let $E$ be a $k$
  vector space of rank $r$. Then
  \begin{enumerate}
  \item the set of separated exhaustive $(M,C)$-multifiltrations on $E$ is
    in bijection with the isomorphisms classes of torsion free sheaves $\caE$
    on $X_{\sigma }$ with an isomorphism $\caE_{x_{0}}\cong E$; 
  \item the set of reflexive separated exhaustive $(M,C)$-multifiltrations on $E$ is
    in bijection with the isomorphisms classes of torsion sheaves $\caE$
    on $X_{\sigma }$ with an isomorphism $\caE_{x_{0}}\cong E$;
  \item the set of regular separated exhaustive $(M,C)$-multifiltrations on $E$ is
    in bijection with the isomorphisms classes of locally free sheaves $\caE$
    on $X_{\sigma }$ with an isomorphism $\caE_{x_{0}}\cong E$.
  \end{enumerate}
\end{rmk}

We end this section showing how Example \ref{exm:4} give us examples
of reflexive sheaves that are not locally free.
\begin{ex}\label{exm:3}
  First we give an example of a reflexive sheaf on a singular surface
  that has generic rank one and is not locally free.
  Let $N=\Z^{2}$ and consider the cone $\sigma
  =\cone((1,0),(-1,2))$. Then $\dual{\sigma } \cap M$ is the semigroup of
  Example \ref{exm:4}~\eqref{item:17}. Let $\Sigma $ be the non
  complete fan given by all the faces of $\sigma $. Then $X_{\Sigma }$
  is a singular affine toric variety and $U_{\sigma }=X_{\Sigma
  }$. Let $E$ be a two 
  dimensional vector space with basis $\setl{u,v}$ and let $F_{\sigma
  }$ be the filtration of Example \ref{exm:4}~\eqref{item:17}. For
  $\tau \prec \sigma $,
  write $F_{\tau }=\Ind_{\pi _{\tau \sigma }}(F_{\sigma })$. Then 
  $\set{F_{\tau }}{\tau \prec \sigma }$ is an exhaustive separated
  regular $\Sigma $-compatible family of multifiltrations. The
  corresponding sheaf $\caE$ is locally free. Let $S\subset E$ be the
  subspace generated by $\{u+v\}$. The
  multifiltration induced in $S$ is not regular. Let $\caS$ be the
  corresponding torsion free sheaf. we know that it is not locally
  free but is easy to check that is reflexive using for instance
  Theorem \ref{thm:1}~\eqref{itm:reflexive}. 
\end{ex}

\begin{ex} \label{exm:5}
  We next give an example of a reflexive, but non locally free, sheaf on a smooth
  threefold, that has generic rank 2.

  Let $N=\Z^{3}$ be a rank $3$ lattice and $\sigma $ the positive
  octant. To $\sigma $ corresponds the affine smooth toric variety
  $X_{\sigma }=\A^{3}$. Let $\Sigma $ be the corresponding fan. 
  Let $S$ be the subspace of Example \ref{exm:4}~\eqref{item:18} and
  $F$ the induced $\Sigma $-compatible family of multifiltrations as in
  Example \ref{exm:3}. This family is easily seen to be exhaustive,
  separated and reflexive, but is not regular. Thus it defines a
  reflexive sheaf that is not locally free. 
  \end{ex}

\section{Pull back of coherent torsion free equivariant sheaves}

In this section we will use the classification of torsion free sheaves in
terms of families of multifiltrations to describe the inverse image of
such sheaves by an equivariant morphisms

Let $N$ and $\dn{N}$ be two lattices and $\Sigma$ and $\dn{\Sigma}$ be
fans in $N_{\R}$ and $\dn{N}_{\R}$ respectively. Let $\varphi
\colon N \to \dn{N}$ be a map of lattices satisfying that, for each
$\sigma \in \Sigma $ there is a $\dn{\sigma }\in \dn{\Sigma }$ with
$\varphi(\sigma )\subset \dn{\sigma }$.  Let $M$ and
$\dn{M}$ be the dual lattices and let $\dual{\varphi}\colon M'\to M$
be the dual map.

The lattice map $\varphi$ induces 
a toric
morphism $\bar{\varphi} \colon X = X_{\Sigma} \to \dn{X} = X_{\dn{\Sigma}}$. Let
$\T$ (respectively, $\dn{\T}$) be the full dimensional torus in $X$
(respectively, $\dn{X}$). Suppose $\dn{\caE}$ is an equivariant
coherent torsion free sheaf 
on $\dn{X}$ corresponding  to the exhaustive separated $\dn{\Sigma
}$-compatible family of multifiltrations $\set{(\dn{E},
  \dn{F}_{\dn{\sigma}})}{\dn{\sigma} \in \dn{\Sigma}}$.

Let $\caE
= \bar{\varphi}^* \dn{\caE}$ be the pull back sheaf on $X$. It is again an
equivariant coherent torsion free sheaf. Let
$\set{(E, F_{\sigma})}{\sigma \in \Sigma}$ be the corresponding $\Sigma $-compatible
family of filtrations.

\begin{prop}   \label{prp:pbforaff}
  In the previous set up, let
  $\sigma \in \Sigma $ and $\dn{\sigma} \in \dn{\Sigma} $ be
  cones such that $\bar{\varphi}(\sigma) \subset
  \dn{\sigma}$. Denote by $\dual{\varphi}_{\sigma }\colon M'(\sigma'
  )\to M(\sigma )$ the map induced by $\dual{\varphi}$.

  Then
  \begin{displaymath}
    F_{\sigma }= \Ind_{\dual{\varphi}_{\sigma }}(F_{\sigma '})
  \end{displaymath}
\end{prop}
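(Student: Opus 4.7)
The plan is to first establish the claim at the level of $\Sigma$-compatible \emph{systems} of multifiltrations in the sense of Definition \ref{def:12}, where the grading on the pullback module can be computed directly, and then translate to the family formulation of Definition \ref{def:10} using the functoriality of $\Ind$.

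First, the claim is local. Since $\varphi(\sigma)\subset \dn{\sigma}$, the morphism $\bar{\varphi}$ restricts to $U_\sigma\to U_{\dn{\sigma}}$, sends $x_0$ to $\dn{x_0}$, and induces a canonical identification $E=\dn{E}$ of the fibers. By the construction in the proof of Theorem \ref{thm:1} together with Lemma \ref{lemm:8}, the system multifiltrations $F_\sigma^{\mathrm{sys}}$ and $F_{\dn\sigma}^{\mathrm{sys}}$ (indexed by $(M,\dual\sigma)$ and $(\dn M,\dual{\dn\sigma})$) are recovered from the $\T$- and $\dn\T$-graded modules $\Gamma(U_\sigma,\caE)$ and $\Gamma(U_{\dn{\sigma}},\dn{\caE})$ by evaluating sections at the respective origins. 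Because pullback along an affine morphism is base change,
\[
  \Gamma(U_\sigma,\caE) \;=\; \Gamma(U_{\dn{\sigma}},\dn{\caE}) \otimes_{k[\dual{\dn{\sigma}}\cap \dn M]} k[\dual{\sigma}\cap M],
\]
where the ring map is $\chi^{\dn m}\mapsto \chi^{\dual{\varphi}(\dn m)}$.

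Next I compute the $M$-grading. With the sign convention of Example \ref{exm:6} (so that $\chi^\mu\in\caO_X$ lies in $\T$-character $-\mu$) and the fact that the $\T$-action on $\Gamma(U_{\dn{\sigma}},\dn{\caE})$ factors through $\bar\varphi\colon\T\to\dn\T$, a simple tensor $s\otimes \chi^\mu$ with $s\in \Gamma(U_{\dn{\sigma}},\dn{\caE})_{\dn m}$ and $\mu\in\dual\sigma\cap M$ carries character $\dual{\varphi}(\dn m)-\mu$. Evaluating at $x_0$ annihilates the factor $\chi^\mu$ (since $\chi^\mu(x_0)=1$) and sends $s$ to an element of $F_{\dn{\sigma}}^{\mathrm{sys},\dn m}E$. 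The condition that $\mu\in \dual\sigma\cap M$ then translates into $m\le_{\dual\sigma}\dual{\varphi}(\dn m)$, and we obtain
\[
  F_\sigma^{\mathrm{sys},m}E \;=\; \sum_{\dn m\,:\, m\le_{\dual\sigma}\dual{\varphi}(\dn m)} F_{\dn{\sigma}}^{\mathrm{sys},\dn m}E \;=\; \Ind_{\dual{\varphi}}(F_{\dn{\sigma}}^{\mathrm{sys}})^m E.
\]

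Finally, I translate from systems to families via Lemma \ref{lemm:8}: $F_\sigma=\Ind_{\pi_\sigma}(F_\sigma^{\mathrm{sys}})$ and similarly for $F_{\dn{\sigma}}$. By construction $\pi_\sigma\circ\dual{\varphi}=\dual{\varphi}_\sigma\circ\pi_{\dn{\sigma}}$, so Proposition \ref{prop:7} gives
\[
  F_\sigma = \Ind_{\pi_\sigma}\Ind_{\dual{\varphi}}(F_{\dn{\sigma}}^{\mathrm{sys}}) = \Ind_{\dual{\varphi}_\sigma}\Ind_{\pi_{\dn{\sigma}}}(F_{\dn{\sigma}}^{\mathrm{sys}}) = \Ind_{\dual{\varphi}_\sigma}(F_{\dn{\sigma}}),
\]
which is the claimed equality. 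The main obstacle is the careful bookkeeping of the $M$-grading on the tensor product, in particular tracking the sign convention of Example \ref{exm:6} and the twist coming from the $\T$-action factoring through $\bar\varphi$; once the system-level identity is in place, the passage to families is purely formal.
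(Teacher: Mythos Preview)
Your argument is correct and follows essentially the same route as the paper: compute the $M$-grading on the tensor-product module $\Gamma(U_{\dn\sigma},\dn\caE)\otimes_{k[\dual{\dn\sigma}\cap\dn M]}k[\dual\sigma\cap M]$, identify the nonzero isotypical pieces via the sign convention of Example~\ref{exm:6}, and read off the induced-multifiltration formula after evaluation at $x_0$. The one genuine addition in your write-up is the explicit passage from the system level (multifiltrations indexed by $M$, $\dn M$) to the family level (indexed by $M(\sigma)$, $\dn M(\dn\sigma)$) via $\pi_\sigma\circ\dual\varphi=\dual\varphi_\sigma\circ\pi_{\dn\sigma}$ and Proposition~\ref{prop:7}; the paper's own proof stays at the system level throughout and never explicitly descends to the quotient lattices, so your final paragraph actually fills a small gap in the presentation rather than taking a different path.
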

\begin{proof}
  Let $U_{\sigma}\coloneqq \Spec(k[\dual{\sigma }\cap M]$ and
  $U_{\dn{\sigma}}\coloneqq \Spec(k[\dual{\dn{\sigma}{}}\cap \dn{M}]$
  be the invariant affine subsets corresponding to the cones $\sigma $
  and $\sigma '$. Then $\caE|_{U_{\sigma }} =
  \widetilde{H}$ (respectively, $\dn{\caE}|_{U_{\dn{\sigma} }} =
  (\dn{H})^{\sim}$) where $H= \gs{U_{\sigma }}{\caE}$ 
  is an $A_{\sigma} \coloneqq k[\dual{\sigma} \cap M]$-module (respectively,
  $\dn{H}=\gs{U_{\dn{\sigma} }}{\dn{\caE}}$ is an $A_{\dn{\sigma}} :=
  k[\dual{{\dn{\sigma}}} \cap 
  \dn{M}]$-module).  By the construction of the pull-back sheaf, $H =
  \dn{H} \otimes_{A_{\dn{\sigma}}} 
  A_{\sigma}$. Furthermore, $E = H \otimes_{A_{\sigma}}
  k(x_{0})=\dn{H} \otimes_{A_{\dn{\sigma}}}  k(\dn{x}_{0})$.

  Note that $k(x_{0})$ is the field $k$ with the structure of
  $A_{\sigma }$-module given by the morphism $A_{\sigma }\to k$,
  $\chi^{m}\mapsto 1$, which is the evaluation at the point $x_{0}$. 

  By the construction of the family of multifiltrations from the
  torsion free sheaf (see the proof of Theorem \ref{thm:1}) we have
  \[
    F^{m}_{\sigma}E = \im(H_{m} \hookrightarrow H \to H
    \otimes_{A_{\sigma}} k(x_0)),
  \]
  where $H_{m}$ is the eigenspace of character $\chi^{m}$ of $H$.
  But note that,
  \begin{displaymath}
    H_{m} = \left( \dn{H} \otimes_{A_{\dn{\sigma}}}
    A_{\sigma}\right)_{m} = \bigoplus_{\dn{\mu} \in \dn{M}} \left(
    \dn{H} \right)_{\dn{\mu}} \otimes \left( A_{\sigma} \right)_{m
      - \dual{\varphi}(\dn{\mu})} 
  \end{displaymath}
  Thus,
  \begin{align*}
    F_{\sigma}^{m}E 
    &=
    \im(H_{m} \to H \otimes_{A_{\sigma}} k(x_0)) \\
    &=
    \im\left( \bigoplus_{\dn{\mu} \in \dn{M}} \left( \dn{H}
    \right)_{\dn{\mu}} \otimes_k \left( A_{\sigma} \right)_{m -
      \dual{\varphi}(\dn{\mu})} \to \dn{H} \otimes_{A_{\dn{\sigma}}} k(x_0)
      \right) \\
  \end{align*}
  Recall that, by Example \ref{exm:6}, the isotypical component
  $(A_{\sigma})_{m}$ is non zero if and only if $-m\in \dual{\sigma
  }$. Therefore, in the direct sum above, the only non zero terms 
  correspond to those $\mu$ for which $\dual{\varphi}(\dn{\mu}) - m \in
  \dual{\sigma}$, or in other words, for which $\dn{\mu}
  \geq_{\dual{\sigma}} m$. Thus,
  \begin{align*}
    F_{\sigma}^{\lambda}E 
    &=
    \im \left( \bigoplus_{\dn{\mu} \colon  \dual{\varphi}(\dn{\mu})
      \underset {\dual{\sigma}}{\geq}
    m} \dn{H}_{\dn{\mu}} \otimes_k (A_{\sigma})_{m -
    \dual{\varphi}(\dn{\mu})} \to \dn{H} \otimes_{A_{\dn{\sigma}}} k(x_0)
    \right) \\
    &=
    \sum_{\dn{\mu} \colon \dual{\varphi}(\dn{\mu}) \underset{\dual{\sigma}}{\geq} m}
      F{\dn{\sigma}}^{\dn{\mu}}E\\
    &=\Ind_{\dual{\varphi}}(F_{\sigma '})^{m} E,
  \end{align*}
  since the image of $(A_{\sigma})_{\lambda - \fbd(\dn{\mu})}$ in
  $k(x_0)$ is non-zero. This completes the proof.
\end{proof}

In order to patch together the different results of Proposition \ref{prp:pbforaff} for
different $\sigma $ we make the following definition.

\begin{df}With the setting at the beginning of this
  section, we denote by $\Ind_{\dual{\varphi}}\set{F_{\dn {\sigma
      }}}{\dn{\sigma }\in \dn{\Sigma }}$ the $\Sigma $-compatible
  family, that to each $\sigma \in \Sigma $ assigns the
  multifiltration $\Ind_{\dual{\varphi}_{\sigma }}(F_{\dn{\sigma }})$
  for any $\dn{\sigma }\in \dn{\Sigma }$ with $\varphi(\sigma )\subset
  \dn{\sigma }$.

  By Proposition \ref{prop:7}, it is clear that
  $\Ind_{\dual{\varphi}_{\sigma }}(F_{\dn{\sigma }})$ 
  does not depend on the choice of $\dn{\sigma }$ with $\varphi(\sigma )\subset
  \dn{\sigma }$ and that we obtain a $\Sigma $-compatible family.  
\end{df}

\begin{thm}
  Let $\varphi \colon N \to \dn{N}$ be a morphism of lattices compatible
  with fans $\Sigma$ on $N_{\R}$ and $\dn{\Sigma}$ on $\dn{N}_{\R}$.
  Denote the dual map by $\dual{\varphi} \colon \dn{M} \to M$. Let
  $\bar{\varphi} \colon X = 
  X_{\Sigma} \to \dn{X} = X_{\dn{\Sigma}}$ be the induced toric morphism
  between the corresponding toric varieties. Suppose $\dn{\caE}$ is an
  equivariant coherent torsion free sheaf on $\dn{X}$  and $\caE =
  \bar{\varphi}^{\ast} 
  \dn{\caE}$ the pull-back sheaf on $X$. If $\dn{\caE}$ corresponds
  to the $\dn{\Sigma }$-compatible family of multifiltrations
  $\set{\dn{F}_{\dn{\sigma}}}{\dn{\sigma} \in \dn{\Sigma}}$ on
  $E\coloneqq \dn{\caE}_{x_{0}}$, then $\caE$ corresponds to the
  $\Sigma $-compatible family of multifiltrations
  $\Ind_{\dual{\varphi}}\set{\dn{F}_{\dn{\sigma}}}{\dn{\sigma} \in
    \dn{\Sigma}}$. 
\end{thm}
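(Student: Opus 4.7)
The heavy lifting is already done in Proposition \ref{prp:pbforaff}, which identifies $F_{\sigma} = \Ind_{\dual{\varphi}_{\sigma}}(\dn{F}_{\dn{\sigma}})$ for a single compatible pair $(\sigma,\dn{\sigma})$. My plan is to reduce the theorem to packaging this local result over the whole fan and checking the well-definedness alluded to in the definition of $\Ind_{\dual{\varphi}}\set{\dn{F}_{\dn{\sigma}}}{\dn{\sigma}\in \dn{\Sigma}}$. By Theorem~\ref{thm:1}~\eqref{itm:torsfree}, the pullback $\caE = \bar{\varphi}^{\ast}\dn{\caE}$ is a torsion-free equivariant sheaf and hence corresponds to a uniquely determined exhaustive separated $\Sigma$-compatible family $\set{F_{\sigma}}{\sigma \in \Sigma}$ on the fiber $\caE_{x_{0}}$. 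Since $\bar{\varphi}$ is toric, it sends the identity $x_{0} \in \T$ to $\dn{x}_{0} \in \dn{\T}$, so the base change formula for quasi-coherent pullback identifies $\caE_{x_{0}} = \dn{\caE}_{\bar{\varphi}(x_{0})}$ canonically with $E = \dn{\caE}_{\dn{x}_{0}}$.

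Now fix $\sigma \in \Sigma$. By hypothesis there is at least one $\dn{\sigma} \in \dn{\Sigma}$ with $\varphi(\sigma) \subset \dn{\sigma}$, and Proposition~\ref{prp:pbforaff} gives $F_{\sigma} = \Ind_{\dual{\varphi}_{\sigma}}(\dn{F}_{\dn{\sigma}})$. To conclude, I need to check that this does not depend on the choice of $\dn{\sigma}$. If $\dn{\sigma}_{1}, \dn{\sigma}_{2}$ both contain $\varphi(\sigma)$, then so does their common face $\dn{\tau} = \dn{\sigma}_{1} \cap \dn{\sigma}_{2}$. From $\varphi(\sigma) \subset \dn{\tau}$ we get $\dual{\varphi}(\dn{\tau}^{\perp}) \subset \sigma^{\perp}$, so the map $\dual{\varphi}_{\sigma} \colon M'(\dn{\sigma}_{i}) \to M(\sigma)$ kills $\dn{\tau}^{\perp}/\dn{\sigma}_{i}^{\perp}$ and factors as
\[
  M'(\dn{\sigma}_{i}) \xrightarrow{\pi_{\dn{\tau}\dn{\sigma}_{i}}} M'(\dn{\tau}) \xrightarrow{\dual{\varphi}_{\sigma,\dn{\tau}}} M(\sigma).
\]
By the functoriality of induction (Proposition~\ref{prop:7}) and the $\dn{\Sigma}$-compatibility $\Ind_{\pi_{\dn{\tau}\dn{\sigma}_{i}}}(\dn{F}_{\dn{\sigma}_{i}}) = \dn{F}_{\dn{\tau}}$ of the given family, both choices compute to the same multifiltration
\[
  \Ind_{\dual{\varphi}_{\sigma}}(\dn{F}_{\dn{\sigma}_{i}}) = \Ind_{\dual{\varphi}_{\sigma,\dn{\tau}}}\bigl(\Ind_{\pi_{\dn{\tau}\dn{\sigma}_{i}}}(\dn{F}_{\dn{\sigma}_{i}})\bigr) = \Ind_{\dual{\varphi}_{\sigma,\dn{\tau}}}(\dn{F}_{\dn{\tau}}).
\]
This gives the equality $F_{\sigma} = \Ind_{\dual{\varphi}}\set{\dn{F}_{\dn{\sigma}}}{\dn{\sigma}\in \dn{\Sigma}}_{\sigma}$ for each $\sigma$, matching the definition of the induced family. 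The $\Sigma$-compatibility of $\set{F_{\sigma}}{\sigma\in \Sigma}$ is automatic since it comes from the coherent equivariant sheaf $\caE$; alternatively, it follows directly from a second application of Proposition~\ref{prop:7} to the factorization $\pi_{\tau\sigma} \circ \dual{\varphi}_{\sigma} = \dual{\varphi}_{\tau} \circ \pi_{\dn{\tau}\dn{\sigma}}$ when $\tau \preceq \sigma$ and $\dn{\tau}$ is any cone of $\dn{\Sigma}$ containing $\varphi(\tau)$.

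The only point where one might slip is step three, the well-definedness; but because the only non-triviality there is tracing how $\dual{\varphi}_{\sigma}$ factors through the quotient $M'(\dn{\tau})$, and this is forced by the support condition $\varphi(\sigma)\subset \dn{\tau}$, there is no real obstacle. Everything else is formal bookkeeping on top of Proposition~\ref{prp:pbforaff}.
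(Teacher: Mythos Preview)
Your proposal is correct and follows the same approach as the paper. In fact, the paper gives no explicit proof of this theorem at all: the well-definedness and $\Sigma$-compatibility of the induced family are asserted in the definition immediately preceding the theorem (citing Proposition~\ref{prop:7}), and the theorem is then stated as an immediate consequence of Proposition~\ref{prp:pbforaff}; you have simply written out the factorization-through-the-common-face argument that the paper leaves implicit.
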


\section{Representations of solvable groups}
In this section, we shall see another situation in which multifiltrations
arise. Let $G$ be a connected solvable algebraic group over an algebraically
closed field $k$. We know, from \cite[chapter 19]{Humphreys} that such a
solvable group fits into a split short exact sequence
\begin{equation} \label{eqn:sessolgp}
  0 \rightarrow U \xrightarrow{\iota} G \xrightarrow{\pi} \T \rightarrow 0,
\end{equation}
with $U$ being the unipotent radical of $G$ and $\T$ a torus.  Choose a
splitting $\alpha \colon \T \to G$. Then, via $\alpha$, $\T$ acts on $U$ by
conjugation: $t \cdot u = \alpha(t) u \alpha(t)^{-1}$ and this induces the
adjoint representation $\Ad$ of $\T$ on the Lie algebra $\fu$ of
$U$.

Let $M = \Hom(\T, \Gm)$. The action of $\T$ on $\fu$ induces a decomposition
\[
  \fu = \bigoplus_{m \in M} \fu_{m}^{\alpha}.
\]
Let $R = \set{m \in M}{\fu_{m}^{\alpha} \neq \setl{0}}$, which is a finite set.
Consider the cone $C' \subset M_{\Q}$ defined by
\[
  C' = \set{m \in M_{\Q}}{m = \sum_{r \in R} a_r r,\text{ where } 0 \leq a_r
  \in \Q}.
\]
and let $C=C'\cap M$.

\begin{lem}\label{lem:indepcon}
  In the aforementioned set up, $C$ is independent of the choice of the
  splitting $\alpha$.
\end{lem}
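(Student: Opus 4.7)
The plan is to reduce the problem to a conjugation calculation, using the fact that any two splittings of \eqref{eqn:sessolgp} differ by inner conjugation by a single element of $U$. Given that reduction, the weight-space decompositions under the two resulting $\T$-actions are related by a linear isomorphism, so the two sets of weights coincide, and the cone they generate is the same.

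First, I would invoke the standard result (cf.~\cite[Chapter 19]{Humphreys}) that any two maximal tori in a connected solvable group are conjugate by an element of the unipotent radical. Given two splittings $\alpha,\alpha'\colon \T\to G$ of $\pi$, their images $\alpha(\T)$ and $\alpha'(\T)$ are both maximal tori in $G$, so there exists $u\in U$ with $u\alpha(\T)u^{-1}=\alpha'(\T)$. Both $\alpha'$ and $\operatorname{int}(u)\circ\alpha$ are splittings with the same image, and since $u\in U=\ker\pi$, composing with $\pi$ yields $\operatorname{id}_\T$ on both. As $\pi$ restricted to either of these isomorphic tori is an isomorphism onto $\T$, the two splittings must agree: $\alpha'(t)=u\alpha(t)u^{-1}$ for every $t\in\T$.

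Next I would compare the weight decompositions. For $v\in\fu_m^{\alpha}$,
\[
\Ad(\alpha'(t))(\Ad(u)v)=\Ad(u)\Ad(\alpha(t))\Ad(u)^{-1}(\Ad(u)v)=\chi^m(t)\,\Ad(u)v,
\]
so $\Ad(u)$ maps $\fu_m^\alpha$ into $\fu_m^{\alpha'}$. Applying the same argument with $u^{-1}$ and the roles of $\alpha,\alpha'$ swapped gives the inverse inclusion, so $\Ad(u)\colon\fu_m^\alpha\to\fu_m^{\alpha'}$ is a linear isomorphism for every $m\in M$. In particular $\fu_m^{\alpha'}\neq\{0\}$ if and only if $\fu_m^\alpha\neq\{0\}$, i.e.\ the set $R=\{m\in M\mid\fu_m^\alpha\neq\{0\}\}$ does not depend on $\alpha$. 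Since $C'$ is defined solely in terms of $R$, and $C=C'\cap M$, the cone $C$ is independent of $\alpha$.

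The main (and only nontrivial) obstacle is the conjugacy-of-splittings input from structure theory of solvable groups; once that is in hand the rest is a direct check that inner conjugation by a fixed element intertwines the two adjoint actions and therefore matches weight spaces to weight spaces of the same weight.
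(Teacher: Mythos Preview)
Your proof is correct and follows essentially the same approach as the paper: conjugate the two splitting tori, show that $\Ad$ of the conjugating element carries weight spaces to weight spaces of the same weight, and conclude that $R$ (hence $C$) is independent of $\alpha$. You are in fact slightly more careful than the paper on one point: you explicitly justify that $\alpha'(t)=u\alpha(t)u^{-1}$ for all $t$ (not merely that the image tori coincide), using that both maps are sections of $\pi$; the paper's argument uses this identification implicitly. Your choice of the conjugating element in $U$ rather than in $G$ is harmless, since the needed conjugacy result is available in either form in \cite{Humphreys}.
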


\begin{proof}
  Suppose $\alpha$ and $\alpha'$ are two sections of $\pi$.  Recall that any
  two tori in $G$ are conjugate.  Hence, there exists a $g \in G$ such that
  $\alpha'(\T) = g \alpha(\T) g^{-1}$. Let $\varphi_g : G \to G$ be the
  conjugation map $x \mapsto g x g^{-1}$. By definition, $\Ad(g) = (d
  \varphi_g)_{e}$, where $e \in G$ is the identity.

  Let us denote the decomposition of $\fu$ obtained from $\alpha(\T)$ and
  $\alpha'(\T)$ as
  \[
	\fu = \bigoplus_{m \in R_{\alpha} \subset M} \fu_{m}^{\alpha} \qquad
	\qquad \fu = \bigoplus_{m' \in R_{\alpha'} \subset M}
	\fu_{m'}^{\alpha'}.
  \]
  We claim that $R_{\alpha'} = R_{\alpha}$. But that is obvious as if $u \in
  \fu_{m}^{\alpha'}$, $\Ad(g) u \in \fu_{m}^{\alpha}$\remvivek{
	Let $u \in \fu_{m}^{\alpha'}$. Then $\Ad(\alpha'(t))u =
	m(\pi(\alpha'(t))) u = m(t) u$. Now $\Ad(\alpha(t)) (\Ad(g) u) =
	d\varphi_{\alpha(t)} d\varphi_g u = d \varphi_{\alpha(t)g} u = d
	\varphi_{g \varphi_{g^{-1}}(\alpha(t))} u = d \varphi_g d
	\varphi_{\alpha'(t)} u = \Ad(g) \Ad(\alpha'(t)) u = \Ad(g) (m(t) u) =
	m(t) \Ad(g) u$ proving that $\Ad(g) u \in \fu_m^{\alpha}$.
  };
  since conjugation by $g \in G$ maps $U$ to itself and hence $\Ad(g)$
  restricts to an automorphism of $\fu$.

  Since $R$ is independent of $\alpha$, so is $C$.
\end{proof}

\begin{df}
  Given a connected solvable group $G$, let $U$ be its unipotent radical and
  $\T = G / U$. Let $M = \Hom(\T, \Gm)$, and $C$ be the cone as in
  Lemma~\ref{lem:indepcon}. We shall call $(M, C)$ the \emph{preordered
  abelian group associated} to $G$.
\end{df}

\begin{thm}
  Given a connected solvable group $G$, let $(M, C)$ be the associated
  preordered abelian group. Let $\alpha$ be a section of the short exact
  sequence \eqref{eqn:sessolgp}:
  \[
	\xymatrix{
	  0 \ar[r] &
	  U \ar[r]_{\iota} &
	  G \ar[r]_{\pi} &
	  \T \ar@/_1pc/@{.>}[l]_{\alpha} \ar[r] &
	  0.
	}
  \]
  Suppose $V$ is a finite dimensional representation of
  $G$ Then the action of $\alpha(\T)$ on $V$ induces a decomposition
  \begin{equation*}
	V = \bigoplus_{\mu \in M} V_{\mu}
  \end{equation*}
  and one can define a multifiltration $F$ on $V$ as $F^m V = \bigoplus_{\mu
	\geq m} V_{\mu}$. Then the following holds.
  \begin{enumerate}
    \item \label{itm:Findpalp} The multifiltration $F$ is independent of the
      choice of $\alpha$.
    \item \label{itm:mfregsep} The multifiltration is exhaustive, separated and regular.
  \end{enumerate}
\end{thm}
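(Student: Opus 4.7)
My plan is to deduce (2) as a direct consequence of the construction (equivalently, of the ``if'' direction of Theorem~\ref{thm:cridsdec}), and to reduce (1) to the key lemma that the unipotent radical $U$ preserves every $F^{m}V$.

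For (2): by construction, for any subset $K\subset M$ one has
\[
  F^{K}V=\sum_{m\in K}\bigoplus_{\mu\geq m}V_{\mu}=\bigoplus_{\nu\in\widetilde{K}}V_{\nu}.
\]
Taking $K=M$ gives exhaustiveness; if $\bigcap_{\alpha}\widetilde{K_{\alpha}}=\emptyset$, then $\bigcap_{\alpha}F^{K_{\alpha}}V=\bigoplus_{\nu\in\bigcap_{\alpha}\widetilde{K_{\alpha}}}V_{\nu}=\setl{0}$, yielding separatedness; and the identity $\bigoplus_{\widetilde{K}_{1}}V_{\nu}\cap\bigoplus_{\widetilde{K}_{2}}V_{\nu}=\bigoplus_{\widetilde{K}_{1}\cap\widetilde{K}_{2}}V_{\nu}$ gives regularity. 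This is precisely the ``if'' direction of Theorem~\ref{thm:cridsdec}, whose proof relies only on the existence of the decomposition and not on strictness of $C$.

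For (1), I first invoke the standard fact that two maximal tori in a connected solvable group are conjugate by an element of the unipotent radical: both $\alpha(\T)$ and $\alpha'(\T)$ are maximal tori of $G$ (of dimension $\dim\T$), so there exists $u\in U$ with $\alpha'(\T)=u\alpha(\T)u^{-1}$, and applying $\pi$ forces $\alpha'(t)=u\alpha(t)u^{-1}$ for every $t\in\T$. A short computation then gives $V_{\mu}^{\alpha'}=u\cdot V_{\mu}^{\alpha}$, hence $F'^{m}V=u\cdot F^{m}V$. Independence of the multifiltration from $\alpha$ therefore reduces to the key lemma that $U$ preserves every $F^{m}V$.

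The key lemma is the main obstacle. To prove it I would fix $v\in V_{\mu}^{\alpha}$ and study the morphism $\psi\colon U\to V$, $u\mapsto u\cdot v$. The identity $\alpha(t)\cdot(u\cdot v)=(\alpha(t)u\alpha(t)^{-1})\cdot(\alpha(t)\cdot v)$ yields the equivariance
\[
  \alpha(t)\cdot\psi(u)=\mu(t)\,\psi\bigl(\alpha(t)u\alpha(t)^{-1}\bigr).
\]
Decomposing $\psi=\sum_{\nu}\psi_{\nu}$ into $\alpha(\T)$-weight components and viewing $\psi_{\nu}$ as an element of $k[U]\otimes V_{\nu}$, this equivariance forces $\psi_{\nu}$ to be a $\T$-weight vector of weight $\mu-\nu$ for the conjugation action of $\T$ on $k[U]$. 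The crux is then the weight bound that every $\T$-weight of $k[U]$ lies in $-C$: the cotangent space $\mathfrak{m}_{e}/\mathfrak{m}_{e}^{2}\cong\fu^{\vee}$ at the identity has weights in $-R$, and this bound propagates to all of $k[U]$ through the $\mathfrak{m}_{e}$-adic associated graded (in characteristic zero via the $\T$-equivariant exponential $\fu\xrightarrow{\sim}U$, and in general via a $\T$-stable composition series of $U$ by one-parameter root subgroups $\mathbb{G}_{a}$ of weights in $R$). Hence $\mu-\nu\in -C$, i.e.\ $\nu\geq\mu$, so that $\psi_{\nu}=0$ unless $\nu\geq\mu$, giving $\psi(U)\subset F^{\mu}V$. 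Since this holds for every weight $\mu$, linearity gives $U\cdot F^{m}V\subset F^{m}V$ for every $m\in M$, completing the lemma and thereby the proof.
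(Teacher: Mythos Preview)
Your proof is correct. For part~(2) you argue exactly as the paper does, invoking the ``if'' direction of Theorem~\ref{thm:cridsdec}.

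For part~(1), both arguments ultimately establish the same key fact---that $U$ (indeed all of $G$) preserves each $F^{m}V$---but by different routes. The paper works infinitesimally: a bracket computation in $\fg$ shows that $\fu_{m}^{\alpha}$ maps $V_{\mu}$ into $V_{\mu+m}$, and since $m\in R\subset C$ this gives $\fu\cdot V_{\mu}\subset F^{\mu}V$; combined with $d\alpha(\ft)\cdot V_{\mu}\subset V_{\mu}$ one obtains $\fg\cdot F^{\mu}V\subset F^{\mu}V$, hence $G\cdot F^{\mu}V\subset F^{\mu}V$. You instead argue globally on $U$, analysing the $\T$-weights occurring in $k[U]$ and bounding them by $-C$. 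Your approach is a bit heavier in machinery (the weight structure of $k[U]$ via the exponential in characteristic zero, or via a $\T$-stable composition series by root $\mathbb{G}_{a}$'s in general), but it has the virtue of making the passage from Lie algebra to group entirely explicit---a step the paper compresses into the single line ``This implies that $v\in V_{\mu}\Rightarrow g\cdot v\in F^{\mu}V$ for all $g\in G$,'' which in positive characteristic would itself require an argument of the sort you supply. Your use of the sharper fact that the conjugating element may be taken in $U$ (rather than merely in $G$) is harmless; the paper's version only needs conjugacy by some $g\in G$ because it has already shown the whole group preserves the filtration.
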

\begin{proof}
  The short exact sequence $0 \to U \to G \to \T \to 0$ leads to a short exact
  sequence of the corresponding Lie algebras
  \[
    0 \to \fu \to \fg \to \ft \to 0.
  \]
  The section $\alpha : \T \to G$ induces a derived map of Lie algebras
  $d\alpha : \ft \to \fg$. Moreover, the action of $\T$ on $U$, $\Ad : \T \to
  GL(U)$, leads to the derived map
  \[
    \ad : \ft \to \gl(\fu), \quad \forall t \in \ft, u \in \fu, t \cdot u =
    \ad(d\alpha(t))(u) = [d\alpha(t), u].
  \]
  Now let $v \in V_{\mu}$ and $u \in \fu^{\alpha}_m$. We have for $t \in \ft$,
  \begin{align*}
    t \cdot (u \cdot v) 
    &= d\alpha(t) u \cdot v = [d \alpha(t), u] \cdot v + u
    d \alpha(t) \cdot v \\
    &= (d\alpha(t) \cdot u) \cdot v + u \cdot \mu(t) v
    = m(t) u
    \cdot v + \mu (t) u \cdot v \\
    &= (\mu(t) + m(t)) u \cdot v.
  \end{align*}
  Thus if $v \in V_{\mu}$ and $u \in \fu_m^{\alpha}$, then $u \cdot v \in
  V_{\mu + m}$.

  Now any $g \in \fg$ splits as $g = u + d\alpha(t)$ with $u \in \fu$ and $t \in \ft$.
  Since $d\alpha(t) \cdot v \in V_{\mu}$ for any $v \in V_{\mu}$,
  \[
    v \in V_{\mu} \implies g \cdot v \in F^{\mu}V,\ \forall g \in \fg.
  \]
  This implies that,
  \[
    v \in V_{\mu} \implies g \cdot v \in F^{\mu}V,\ \forall g \in G.
  \]

  Let $\alpha$ and $\dn{\alpha}$ be two sections, and $g\in G$ with
  $\dn{\alpha }(t)=g\alpha (t)g^{-1}$. We denote by $\bigoplus_{\mu
  }V_{\mu }$ the grading 
  of $V$ induced by $\alpha (\T)$ and by $\bigoplus _{m}\dn{V}_{\mu }$ the one induced
  by $\dn{\alpha }(T)$. Then $\dn{V}_{\mu }=g\cdot V_{\mu }$. Denote 
  on $V$ induced by them as $F_{\alpha}$ and $F_{\alpha'}$ respectively. Then
  the above discussion implies that if $v \in V_{\mu}$,
  \[
    g \cdot v \in F^{\mu}_{\alpha} V.
  \]
  Thus $F_{\alpha'}^{\mu} V \subset F_{\alpha}^{\mu} V$. By reversing the
  roles of $\alpha$ and $\alpha'$, we finish the proof of \eqref{itm:Findpalp}

  The proof of \eqref{itm:mfregsep} follows from Theorem
  \ref{thm:cridsdec} and the fact that the choice of section $\alpha $
  defines a graduation from which the filtration is defined. 
\end{proof}

\begin{ex}
  Let $U$ be a connected unipotent group and $\T$ a torus. Let $G = U \times
  \T$. Let $\alpha \colon \T \to G$ be the map $\alpha(t) = (1, t)$, where
  $1$ is the identity in $U$. It is clear that the induced action on
  $U$ and hence on the Lie algebra $\fu$ is trivial. Thus $C = R =
  \setl{0}$. In this case, for any representation $V$ of $G$, $F^m V = V_m$.
\end{ex}

\begin{ex}
  Consider the connected solvable subgroup of $\GL(3, \C)$ defined by
  \[
	G = \set{
	  \begin{pmatrix}
		\lambda & 0 & a \\
		0 & \lambda^{-1} & b \\
		0 & 0 & 1
	  \end{pmatrix}
	}{\lambda \in \C^{\ast}; a, b \in \C}.
  \]
  It is easy to check that it is a subgroup. In this case the Lie algebra of
  the unitary group, $\fu$ is two dimensional and the torus is one dimensional. It
  is clear that the subspace of $\fu$ generated by $
  \begin{pmatrix}
	0 & 0 & a \\
	0 & 0 & 0 \\
	0 & 0 & 0
  \end{pmatrix}
  $ is the eigenspace of $\chi^1$ and that generated by $
  \begin{pmatrix}
	0 & 0 & 0 \\
	0 & 0 & b \\
	0 & 0 & 0
  \end{pmatrix}
  $ is the eigenspace of $\chi^{-1}$. Thus $R = \setl{-1, 1}$ and $C = M =
  \Z$. In this case, any element of $C$ is a quasi-zero element and
  for any representation $V$, the multifiltration $F$ satisfies $F^m V
  = V$ for all $m \in M$. 
\end{ex}

\begin{ex}
  To get a nontrivial example, consider $G$ be the subgroup of all upper
  triangular matrices in $\SL(n, \C)$. In this case, the unipotent radical
  consists of matrices of the form $I + N$ where $N$ are \emph{strictly}
  upper triangular, and $\T$ consists of all diagonal matrices of
  determinant one. It is easy
  to check that in this case $M \subset \Z^{n}$ is the sublattice
  \begin{displaymath}
    (m_{1},\dots,m_{n})\in M \Longleftrightarrow \sum_{i=1}^{n}m_{n}=0
  \end{displaymath}
  and $C$ is the cone
  generated by the elements $v_{i}$, $1=1,\dots,n-1$ of $M$ of the form
  \begin{displaymath}
    v_{i}=(0,\dots,\underset{i}{1},-1,\dots,0).
  \end{displaymath}
\end{ex}

\providecommand{\bysame}{\leavevmode\hbox to3em{\hrulefill}\thinspace}
\providecommand{\MR}{\relax\ifhmode\unskip\space\fi MR }
\providecommand{\MRhref}[2]{%
  \href{http://www.ams.org/mathscinet-getitem?mr=#1}{#2}
}
\providecommand{\href}[2]{#2}

\end{document}